\documentclass[twoside,11pt]{amsart}

\usepackage{a4wide}
\usepackage{latexsym}
\usepackage{mathrsfs}
\usepackage[T1]{fontenc}
\usepackage{enumitem}
\usepackage{mathrsfs}

\usepackage{amssymb}
\usepackage{latexsym}
\usepackage{amsmath}
\usepackage{fancyhdr}
\usepackage{bbm}
\usepackage{setspace}
\usepackage{mathabx}

\usepackage{amsaddr}

\numberwithin{equation}{section}

\newtheorem{theorem}{Theorem}[section] 
\newtheorem{lemma}[theorem]{Lemma}     
\newtheorem{corollary}[theorem]{Corollary}
\newtheorem{proposition}[theorem]{Proposition}
\theoremstyle{definition}

\newtheorem{definition}[theorem]{Definition}

\newcommand {\C}{\mathbb C}

\newcommand {\N}{\mathbb N}

\newcommand {\K}{\mathcal K}
\newcommand {\B}{\mathcal B}
\newcommand {\A}{\mathcal A}

\newcommand{\supp}{{\rm supp\,}}

\newcommand{\eps}{\varepsilon}
\newcommand{\spn}{{\rm span \,}}

\newcommand{\im}{\operatorname{im}}
\newcommand{\id}{\operatorname{id}}
\newcommand{\dd}{{\rm \, d}}

\newcommand{\SUB}{\operatorname{\mathbf{SUB}}}
\newcommand{\LID}{\operatorname{\mathbf{CLI}}}
\newcommand{\RID}{\operatorname{\mathbf{CRI}}}

\title{Left Ideals of Banach Algebras and Dual Banach Algebras}

\begin{document}

\author[J.\ T.\  White]{Jared T.\ White}
\address{
Laboratoire de Math{\'e}matiques de Besan{\c c}on\\
Universit{\'e} de Franche-Comt{\'e}\\
16 Route de Gray\\
25030 Besan{\c c}on\\
France}
\email{jw65537@gmail.com}

\date{2018}

\subjclass[2010]{Primary: 16P40, 46H10; secondary: 43A10, 43A20, 47L10}
\keywords{Noetherian, Banach algebra, dual Banach algebra, Multiplier algebra, weak*-closed ideal}

\begin{abstract}
We investigate topologically left Noetherian Banach algebras. We show that if $G$ is a compact group, then $L^{\, 1}(G)$ is topologically left Noetherian if and only if $G$ is metrisable. We prove that, given a Banach space $E$ such that $E'$ has BAP, the algebra of compact operators $\K(E)$ is topologically left Noetherian if and only if $E'$ is separable; it is topologically right Noetherian if and only if $E$ is separable. We then give some examples of dual Banach algebras which are topologically left Noetherian in the weak*-topology. Finally we give a unified approach to classifying the weak*-closed left ideals of certain dual Banach algebras that are also multiplier algebras, with applications to $M(G)$ for $G$ a compact group, and $\B(E)$ for $E$ a reflexive Banach space with AP.
\end{abstract}

\maketitle

\section{Introduction}
\noindent
Those studying Banach algebras have long been interested in the interplay between abstract algebra and abstract analysis. This motivates the comparison of the following two definitions: 

\begin{definition} 		\label{def1}
Let $A$ be a Banach algebra, and let $I$ be a closed left ideal of $A$.
	\begin{enumerate}
		\item[{\rm (i)}] Given $n \in \N$, we say that $I$ is \textit{(algebraically)        			generated} by 
			$x_1, \ldots, x_n \in I$ if 
			$$I = A^\sharp x_1+ \cdots + A^\sharp x_n.$$
			When there exist such elements 
			$x_1, \ldots, x_n$ for some $n \in \N$ we say that $I$ 
			is \textit{(algebraically) finitely-generated}.
		\item[{\rm (ii)}] Given $n \in \N$, we  say that $I$ is 
			\textit{topologically generated} by 
			$x_1, \ldots, x_n \in I$ if 
			$$I = \overline{A^\sharp x_1+ \cdots + A^\sharp x_n}.$$
			When there exist such elements 
			$x_1, \ldots, x_n$ for some $n \in \N$ we say that $I$ 
			is \textit{topologically finitely-generated}.
	\end{enumerate}
\end{definition}

\noindent
Here $A^\sharp$ denotes the unitisation of $A$.

It seems both natural and obvious that Definition \ref{def1}(ii) is the appropriate one for the normed setting since it takes account of the topology. 
However, as Banach algebraists we wish to establish a more precise picture of exactly how these two definitions play out. One result in this spirit is the beautiful theorem of Sinclair and Tullo from 1974 \cite{ST}:

\begin{theorem} 	\label{1.0}
Let $A$ be a Banach algebra which is (algebraically) left Noetherian. Then $A$ is finite-dimensional.
\end{theorem}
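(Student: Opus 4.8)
The strategy is to peel off a nilpotent ideal and reduce everything to a semiprime quotient. First I would pass to the unitisation: $A$ sits inside $A^\sharp$ as a left ideal of codimension one, so $A^\sharp$ is again left Noetherian, and $A$ is finite-dimensional once $A^\sharp$ is; hence I may assume $A$ is unital. Let $N$ be the prime radical of $A$. By the elementary ideal theory of left Noetherian rings, $N$ is nilpotent, say $N^{m}=0$, and it is the largest nilpotent ideal of $A$. Since multiplication is continuous, the closure $\overline N$ satisfies $\overline N^{\,m}=0$, so $\overline N$ is a nilpotent ideal and therefore $\overline N=N$; thus $N$ is closed and $A/N$ is a semiprime Banach algebra.

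I claim it is enough to prove that $A/N$ is finite-dimensional. Indeed, each factor $N^{j}/N^{j+1}$ is a left $A/N$-module (as $N$ annihilates it), and it is finitely generated because $N^{j}$ is a finitely generated left ideal by the Noetherian hypothesis. If $A/N$ is finite-dimensional then every finitely generated $A/N$-module is finite-dimensional, so each factor of the finite filtration $A\supseteq N\supseteq\cdots\supseteq N^{m}=0$ is finite-dimensional, and hence so is $A$.

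This reduces the theorem to the semiprime case, which is the heart of the matter. Writing $B:=A/N$, the theorem that a left Noetherian ring has only finitely many minimal prime ideals gives primes $Q_{1},\dots,Q_{k}$ with $Q_{1}\cap\cdots\cap Q_{k}=0$, so $B$ embeds as an algebra in the finite product $\bigoplus_{i=1}^{k}B/Q_{i}$; it therefore suffices to show that a prime left Noetherian Banach algebra is finite-dimensional. For such an algebra the Banach structure should be decisive: an algebraically irreducible representation has commutant a complex Banach division algebra, which is $\C$ by Gelfand--Mazur, so by the Jacobson density theorem the algebra acts as a dense algebra of $\C$-linear operators on an irreducible module $X$, and the target is to force $\dim_{\C}X<\infty$, whence the algebra is a full matrix algebra $M_{n}(\C)$.

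I expect the main obstacle to be exactly this last step: ruling out an infinite-dimensional irreducible module for a left Noetherian primitive Banach algebra. Jacobson density alone is too weak, since it only prescribes operators on finite sets of vectors, whereas a strictly increasing chain of left ideals must be assembled from elements with large kernels. I anticipate needing genuinely analytic input --- completeness together with the open mapping theorem applied to the bounded surjections $(A^\sharp)^{n}\to I$ arising from finite generation, and the non-emptiness of spectra --- to manufacture such a chain from the infinite-dimensionality of $X$ and so contradict the ascending chain condition. A secondary difficulty is ensuring that the quotients one works with are genuinely Banach algebras; I would circumvent the possible non-closedness of the abstract prime ideals $Q_{i}$ by phrasing the dimension bound in terms of the automatically closed primitive ideals.
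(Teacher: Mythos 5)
The paper does not prove Theorem \ref{1.0}; it quotes it from Sinclair and Tullo \cite{ST}, so your attempt has to be measured against their published argument, and against that standard it has a genuine gap --- one you have in fact located yourself. The claim that a prime (or primitive) left Noetherian Banach algebra is finite-dimensional is never proved, only forecast, together with a list of tools (open mapping theorem, non-emptiness of spectra) that you ``anticipate needing''. Everything you actually carry out --- unitisation, nilpotency and closedness of the prime radical, the filtration $A \supseteq N \supseteq \cdots \supseteq N^{m}=0$, the reduction to finitely many minimal primes, Schur plus Gelfand--Mazur, Jacobson density --- is standard Noetherian ring theory plus one continuity estimate, and it reduces the theorem to precisely the statement carrying its entire analytic content. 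As you yourself note, density controls only finitely many vectors, so nothing you have written excludes an infinite-dimensional irreducible module: a dense subalgebra of $\B(X)$ can act irreducibly on an infinite-dimensional $X$, and manufacturing a strictly ascending chain of left ideals from $\dim X = \infty$ \emph{is} the theorem. Until that construction is written out, the proposal is a correct reduction, not a proof.

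Two further points. First, your fallback for the possibly non-closed minimal primes $Q_{i}$ --- rephrasing the bound via the automatically closed primitive ideals --- does not obviously work: semiprime does not imply semiprimitive, and a semiprime Noetherian ring can have nonzero Jacobson radical (e.g.\ $\Z$ localised at a prime), so there is no a priori reason the primitive ideals of $B$ should intersect in $0$; in a Banach algebra the radical consists of quasinilpotent, not nilpotent, elements, so this cannot be waved away. Secondly, the lemma that repairs both defects at once is the \emph{first} step of Sinclair and Tullo's proof, whereas your architecture defers the analysis to the very end: applying the open mapping theorem to the surjection $(A^{\sharp})^{n} \rightarrow \overline{I}$ induced by generators of $\overline{I}$, a geometric-series iteration shows that the generators may be perturbed within $\overline{I}$ without losing generation; choosing the perturbed generators in the dense subideal $I$ yields $I = \overline{I}$, so that in a left Noetherian Banach algebra \emph{every} left ideal is automatically closed. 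This makes all the quotients you need (by the $Q_{i}$, by radical-type ideals) genuinely Banach, and it is the platform from which \cite{ST} then derive nilpotency of the radical and finite-dimensionality of the semisimple part. You allude to exactly this open-mapping mechanism in your final sentence, but as an expectation rather than an argument --- and the hard step it is meant to feed, the contradiction between an infinite-dimensional irreducible module and the ascending chain condition, remains unproved.
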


In recent years a number of papers have appeared which further illustrate that algebraic finite-generation of left ideals in Banach algebras is a very strong condition. This has been motivated by a conjecture of Dales and {\.Z}elazko \cite{DZ} which states that a unital Banach algebra in which every maximal left ideal is finitely-generated is finite-dimensional. The conjecture is known to hold for many classes of Banach algebras \cite{DZ, DKKKL, BK, W1}. For example, in \cite{W1} the present author verified the conjecture for many of the algebras coming from abstract harmonic analysis, including the measure algebra of a locally compact group, as well as a large class of Beurling algebras.

In this article we aim to fill in another corner  of this picture and contrast with the above results by investigating topologically left Noetherian Banach algebras, which we define as follows:

\begin{definition}  \label{1.1}
Let $A$ be a Banach algebra. We say that $A$ is \textit{topologically left Noetherian} if every closed left ideal is topologically finitely-generated.
\end{definition}

We shall demonstrate below that, in contrast to the Sinclair--Tullo Theorem, there are many infinite-dimensional examples of topologically left Noetherian Banach algebras, and that, moreover, the condition often picks out a nice property of some underlying group or Banach space.

We note that versions of Noetherianity for topological rings and algebras are not at all new, and various different versions of this notion exist: see e.g. \cite{C, M}.  Moreover, topological left Noetherianity as in Definition \ref{1.1} was the subject of a post by Kevin Casto on Mathoverflow \cite{Casto}. His question is partially answered by our Theorem \ref{1.4}. We also mention that the fact that separable C*-algebras are topologically left Noetherian  has been known since Prosser's 1963 memoir \cite{Pr}, in which it was shown that every closed left ideal of a separable C*-algebra is topologically principal \cite[Corollary, pg. 26]{Pr}. It should be noted that using modern techniques a short proof of this fact can be obtained by using the correspondence between the closed left ideals of a C*-algebra and its hereditary C*-subalgebras, and the fact that, in the separable case, hereditary C*-subalgebras of a C*-algebra $A$ are all of the form $\overline{xAx}$, for some positive element $x \in A$.

One natural condition that we could consider for Banach algebras that is different from ours would be an ascending chain condition on chains of closed left ideals as in \cite[Proposition 4.1]{C}. However, we know of no infinite-dimensional examples of Banach algebras satisfying this condition. Indeed, many of the natural examples of Banach algebras satisfying Definition \ref{1.1} are easily seen to fail the ascending chain condition. These include $C[0,1]$, as well as the infinite-dimensional examples in Theorems \ref{1.2} and \ref{1.3} below. One might be tempted to try to prove that there are no infinite-dimensional Banach algebras satisfying an ascending chain condition for closed left ideals. However, if this were true, it would imply a negative solution to the question of whether there exists an infinite-dimensional, topologically simple, commutative Banach algebra, which is a notorious open question. For these reasons we have chosen only to study Definition \ref{1.1} in this article. 
%However, a more modest conjecture that might be worthy of a future investigation is that a semisimple Banach algebra which satisfies the ascending chain condition on chains of closed left ideals is finite-dimensional. 

We now outline the main results of the paper. In Section 2 we shall fix some notation and prove some general results that we shall require in the sequel. Section 3 is concerned with Banach algebras on groups, and our main result is the following:

\begin{theorem}		\label{1.2}
Let $G$ be a compact group. Then $L^{\, 1}(G)$ is topologically left Noetherian if and only if $G$ is metrisable.
\end{theorem}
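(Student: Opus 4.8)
The plan is to use the Peter--Weyl decomposition of $L^1(G)$ to reduce the whole question to a counting statement about the unitary dual $\widehat G$. I would first record (or cite) the standard equivalence, valid for a compact group $G$: $G$ is metrisable $\iff$ $G$ is second countable $\iff$ $L^1(G)$ is separable $\iff$ $\widehat G$ is countable, so that the theorem becomes ``$L^1(G)$ is topologically left Noetherian iff $\widehat G$ is countable''. For each $\pi \in \widehat G$ let $e_\pi \in L^1(G)$ be the central idempotent associated with $\pi$ (a suitable multiple of the character $\chi_\pi$), so that the $e_\pi$ are mutually orthogonal and $A_\pi := e_\pi \ast L^1(G)$ is a two-sided ideal isomorphic to the full matrix algebra $M_{d_\pi}$. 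I would also isolate the elementary lemma that, for any $f \in L^1(G)$, the set $\operatorname{supp}(f) := \{\pi : e_\pi \ast f \neq 0\}$ is countable; this follows by approximating $f$ in norm by trigonometric polynomials and using continuity of $f \mapsto \hat f(\pi)$.

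For the easy direction (non-metrisable $\Rightarrow$ not Noetherian) the key observation is: if a closed left ideal $I$ is topologically generated by $x_1,\dots,x_n$, then $\{\pi : e_\pi \ast I \neq \{0\}\}$ is countable. Indeed, since each $e_\pi$ is central and $I = \overline{A^\sharp x_1 + \cdots + A^\sharp x_n}$, continuity of convolution by $e_\pi$ gives $e_\pi \ast I \subseteq \overline{A^\sharp(e_\pi \ast x_1) + \cdots + A^\sharp(e_\pi \ast x_n)}$, which is $\{0\}$ whenever $\pi \notin \bigcup_{i} \operatorname{supp}(x_i)$, a countable set. Applying this to the improper ideal $I = L^1(G)$ --- for which $e_\pi \ast I = A_\pi \neq \{0\}$ for every $\pi$ --- shows that if $L^1(G)$ is topologically left Noetherian then $\widehat G$ must be countable, i.e.\ $G$ is metrisable. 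This half needs nothing beyond the support lemma.

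For the converse (metrisable $\Rightarrow$ Noetherian) I would show that every closed left ideal is in fact topologically \emph{principal}. Given a closed left ideal $I$, the piece $J_\pi := e_\pi \ast I$ is a left ideal of the matrix algebra $A_\pi \cong M_{d_\pi}$, hence principal: $J_\pi = A_\pi p_\pi$ for a self-adjoint idempotent $p_\pi \in A_\pi$. Because $\widehat G$ is countable I may choose scalars $c_\pi > 0$ with $\sum_\pi c_\pi \|p_\pi\|_1 < \infty$ and set $x = \sum_\pi c_\pi p_\pi \in L^1(G)$. Then $e_\pi \ast x = c_\pi p_\pi$, so $A^\sharp(e_\pi \ast x) \supseteq A_\pi p_\pi = J_\pi$; since each $e_\pi \ast x$ lies in the closed left ideal $\overline{A^\sharp x}$, one gets $J_\pi \subseteq \overline{A^\sharp x}$ for every $\pi$, and the reconstruction $I = \overline{\sum_\pi J_\pi}$ yields $I \subseteq \overline{A^\sharp x}$. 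As $x \in I$ forces the reverse inclusion, $I = \overline{A^\sharp x}$ is topologically principal.

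The main analytic obstacle is the reconstruction step $I = \overline{\sum_\pi e_\pi \ast I}$ used in the converse: it rests on a summability (Fej\'er-type) theorem for compact groups guaranteeing that the Ces\`aro means built from a central approximate identity of trigonometric polynomials converge to $f$ in the $L^1$-norm, so that these means lie in $\sum_\pi e_\pi \ast I$ and approximate an arbitrary $f \in I$. Everything else is essentially bookkeeping with the orthogonal central idempotents and the principality of matrix-algebra left ideals; the forward direction is soft, and only the equivalence ``$G$ metrisable $\iff \widehat G$ countable'' and this summability input carry genuine content.
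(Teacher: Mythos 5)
Your proposal is correct, and its skeleton coincides with the paper's proof of Theorem~\ref{2.1.2}: both decompose along the Peter--Weyl blocks $T_\pi(G)\cong M_{d_\pi}(\C)$, generate the block of the given ideal by an idempotent ($p_\pi$, resp.\ $P_\pi$), normalise and sum with summable weights to produce a single element (so both in fact show every closed left ideal is topologically \emph{principal}), and both prove necessity by approximating finitely many topological generators of $L^{\,1}(G)$ by trigonometric polynomials and counting the countably many $\pi$ on which these can be nonzero --- your central idempotent $e_\pi$ is precisely the identity $u$ of $T_\pi(G)$ used in the paper. The one genuinely different step is how you close the sufficiency argument: the paper verifies $I=\overline{L^{\,1}(G)*g}$ by appealing to the Hewitt--Ross classification of the closed left ideals of $L^{\,1}(G)$ (Theorem~\ref{2.1.2a}) and comparing the annihilated subspaces $E_\pi$ of the two ideals, whereas you bypass that classification entirely via the block reconstruction $I=\overline{\sum_\pi e_\pi * I}$, which places each $J_\pi=A_\pi p_\pi$ inside $\overline{A^\sharp x}$ directly; this makes your converse more self-contained, at the cost of proving the reconstruction. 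Note, though, that this step is cheaper than the ``Fej\'er-type summability theorem'' you flag as the main obstacle: since $T(G)$ is a dense two-sided ideal and $L^{\,1}(G)$ has a bounded approximate identity, Lemma~\ref{2.1.1} already gives $\overline{I\cap T(G)}=I$, and any trigonometric polynomial in $I$ is the finite sum of its blocks $e_\pi*t\in e_\pi*I$; equivalently, the standard central approximate identity of trigonometric polynomials suffices, with no Ces\`aro means required. Two minor remarks: self-adjointness of $p_\pi$ plays no role (any idempotent generator of a left ideal of $M_{d_\pi}(\C)$ works, as in the paper), and the equivalence ``$G$ metrisable $\iff\widehat{G}$ countable'', which you cite as standard, is proved in the paper directly --- via Tannaka--Krein duality (countable dual implies first countable, hence metrisable) in one direction and separability of $L^{\,2}(G)$ together with the Peter--Weyl decomposition in the other --- so if you wish to keep your proof free of citations you would need to supply an argument of this kind.
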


We also prove an analogous result for the Fourier algebra of certain discrete groups (Proposition \ref{2.1.3a}).

In Section 4 we turn our attention to algebras of operators on a Banach space. We denote the set of compact operators on a Banach space $E$ by $\K(E)$. Our main result is the following:

\begin{theorem}		\label{1.3}
	\begin{enumerate}
		\item[\rm (i)] Let $E$ be a Banach space with AP. Then $\K(E)$ is topologically left Noetherian if and only if $E'$ is separable.
		\item[\rm (ii)] Let $E$ be a Banach space such that $E'$ has BAP. Then $\K(E)$ is topologically right Noetherian if and only if $E$ is separable.
	\end{enumerate}
\end{theorem}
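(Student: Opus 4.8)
The plan is to classify the closed one-sided ideals of $\K(E)$ by subspaces of the dual, and then to reduce topological finite-generation to separability. For $T \in \K(E)$ let $T' \in \K(E')$ denote its (compact) adjoint. To a closed left ideal $I$ I associate the closed subspace $V_I = \overline{\spn}\bigcup_{T \in I} \im T' \subseteq E'$, and to a closed right ideal the subspace $W_I = \overline{\spn}\bigcup_{T \in I} \im T \subseteq E$. The two identities driving everything are $(z \otimes w)T = (T'z)\otimes w$ and $T(z\otimes w) = z \otimes (Tw)$, valid for $z \in E'$, $w \in E$. The first shows that if $T \in I$ (a left ideal) and $\lambda \in \im T'$, then $\lambda \otimes w \in I$ for every $w$; by linearity and closedness this gives $\lambda \otimes w \in I$ for all $\lambda \in V_I$. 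The second gives the symmetric statement $z \otimes y \in I$ for all $y \in W_I$ when $I$ is a right ideal. These require no approximation hypothesis.

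For the forward implications I would take $I = \K(E)$, which is top.\ finitely generated if $\K(E)$ is top.\ left Noetherian, say by $T_1,\dots,T_n$. Every element of $A^\sharp T_1 + \cdots + A^\sharp T_n$ has dual range inside the separable space $\overline{\im T_1' + \cdots + \im T_n'}$ (ranges of compact operators being separable), and a short limiting argument — approximating an arbitrary $\lambda \otimes y$ in norm and extracting $\lambda$ — forces this separable space to be all of $E'$. Hence $E'$ is separable. The right-handed argument is identical, with ranges in place of dual ranges, yielding separability of $E$.

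For the converse of (i), assume $E'$ is separable, so $E$ has AP. Given a closed left ideal $I$, I claim $I = I_V := \{T : \overline{\im T'} \subseteq V\}$ with $V = V_I$. The inclusion $I \subseteq I_V$ is immediate, and the rank-one computation above, together with linearity, puts every finite-rank operator with dual range in $V$ into $I$. The key step is to approximate a general $T \in I_V$ by such operators: using AP of $E$ I choose finite-rank $R_k$ approximating the identity on the compact set $\overline{T(B_E)}$, so that $\|R_kT - T\| \to 0$ while $\im (R_kT)' \subseteq \im T' \subseteq V$; closedness then gives $T \in I$. Finally I construct a single generator. Choosing $(\lambda_n)$ with dense span in $V$ and a biorthogonal sequence $(y_n, \mu_n)$ in $E \times E'$, I set $T_0 = \sum_n 4^{-n}\lambda_n \otimes y_n \in I$; biorthogonality yields $T_0'\mu_k = 4^{-k}\lambda_k$, so $\overline{\im T_0'} = V$, and $(\nu \otimes y)T_0 = (T_0'\nu)\otimes y$ shows $\lambda \otimes y \in \overline{\K(E)T_0}$ for all $\lambda \in V$. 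Combined with the approximation step this gives $\overline{A^\sharp T_0} = I$, so $\K(E)$ is in fact topologically principal.

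The converse of (ii) follows the same template, with $W = W_I \subseteq E$, a Markushevich basis of $W$ in place of $(\lambda_n)$, and the identity $T(z \otimes w) = z \otimes (Tw)$ supplying the generator. The one step that is genuinely asymmetric — and which I expect to be the main obstacle — is the approximation of a compact $T$ with $\overline{\im T} \subseteq W$ by finite-rank operators whose range lies in $W$; that is, producing finite-rank $R_k$ on $E$ with $\|TR_k - T\| \to 0$. Since $\|TR_k - T\| = \|(R_k' - \id)T'\|$, this amounts to approximating the identity uniformly on the compact set $\overline{T'(B_{E'})} \subseteq E'$ by \emph{adjoints} of finite-rank operators on $E$. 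Plain AP of $E'$ only supplies finite-rank operators on $E'$, which need not be adjoints; to replace them by $R_k'$ with norms kept under control I would invoke the bounded approximation property of $E'$ together with the principle of local reflexivity. This is precisely why the hypothesis in (ii) is BAP of $E'$ rather than mere AP, and proving this right-handed approximation lemma is the technical heart of the argument.
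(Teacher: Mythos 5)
Your argument is correct, and its skeleton is the same as the paper's: classify the closed left (right) ideals of $\K(E)$ by closed subspaces of $E'$ (of $E$), show that the ideal topologically generated by $T_1,\ldots,T_n$ corresponds to $\overline{\im T_1' + \cdots + \im T_n'}$ (resp.\ $\overline{\im T_1 + \cdots + \im T_n}$), and realise an arbitrary separable subspace as $\overline{\im T'}$ (resp.\ $\overline{\im T}$) of a single compact operator built from a basic sequence and a weighted series of rank-one operators. This is exactly the route of Theorems \ref{2.3.8} and \ref{2.3.12} via Theorems \ref{2.3.4} and \ref{2.3.9} and Lemmas \ref{2.3.5}, \ref{2.3.7}, \ref{2.3.10} and \ref{2.3.11}, and like the paper you obtain the stronger conclusion that every closed one-sided ideal is topologically principal. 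Where you genuinely differ is that you inline the paper's black boxes: instead of quoting Gr{\o}nb{\ae}k's classification and deducing density of the finite-rank operators in each closed ideal from a one-sided approximate identity (Dixon's theorem combined with Lemma \ref{2.1.1}), you prove the classification directly from the rank-one identities and obtain the density by applying AP on the compact set $\overline{T(B_E)}$, where $B_E$ is the unit ball of $E$ --- which is in effect a proof of Dixon's theorem, so nothing is lost. The one step you leave unproved --- approximating the identity uniformly on compact subsets of $E'$ by \emph{adjoints} of finite-rank operators on $E$ --- is precisely what the paper also does not prove but outsources to \cite[Theorem 3.3]{GW} (a bounded approximate identity for $\A(E)$ when $E'$ has BAP), and the mechanism you name, BAP of $E'$ plus the principle of local reflexivity, is indeed how that result (Johnson's theorem that a dual space with BAP has BAP witnessed by conjugate operators) is established; so you are at parity with the paper here rather than in deficit.

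Two small blemishes. First, the phrase ``assume $E'$ is separable, so $E$ has AP'' is false as an implication --- separability of the dual does not imply AP (there are subspaces of $c_0$ failing AP whose duals are separable); it is harmless only because AP of $E$ is a standing hypothesis of part (i), and you should phrase it that way. Second, in the generator construction for part (ii), the functionals of your Markushevich basis of $W$ must be extended to $E'$ by Hahn--Banach and need not be uniformly bounded, so the weights $4^{-n}$ should be scaled against $\Vert \mu_n \Vert \, \Vert y_n \Vert$ to guarantee norm convergence of the series; the biorthogonality then still gives $T_0 y_k = c_k y_k$ and hence $\overline{\im T_0} = W$, matching the paper's Lemma \ref{2.3.11}. (In the left-handed construction of part (i) no such scaling is needed, since the norms of the biorthogonal functionals never enter the convergence estimate.)
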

\noindent
This theorem actually follows from more general results about the algebra of approximable operators $\A(E)$, for a formally larger class of Banach spaces $E$ (Theorem \ref{2.3.8} and Theorem \ref{2.3.12}).

In Section 5 we consider weak*-topologically left Noetherian dual Banach algebras, believing that this will be a more appropriate notion for the measure algebra $M(G)$ of a locally compact group $G$, and for $\B(E)$, the algebra of bounded linear operators on a reflexive Banach space $E$. We consider Banach algebras $A$ for which the multiplier algebra $M(A)$ is a dual Banach algebra in a natural way, and prove a general result, Proposition \ref{2.0.9}, which says that $M(A)$ is weak*-topologically left Noetherian whenever $A$ is topologically left Noetherian. We then apply this theorem to the algebras of the form $M(G)$ and $\B(E)$ to get the following corollary:

\begin{corollary}		\label{1.4}
	\begin{enumerate}
		\item[\rm (i)] Let $G$ be a compact, metrisable group. Then $M(G)$ is weak*-to\-p\-o\-logically left Noetherian.
		\item[\rm (ii)] Let $E$ be a separable, reflexive Banach space with AP. Then $\B(E)$ is weak*-topologically left and right Noetherian.
\end{enumerate}
\end{corollary}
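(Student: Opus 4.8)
The plan is to derive Corollary \ref{1.4} as a direct application of Proposition \ref{2.0.9} together with the topological Noetherianity results already established for the underlying "small" algebras. The corollary asserts that two specific dual Banach algebras, $M(G)$ and $\B(E)$, are weak*-topologically left (and in the second case also right) Noetherian. Proposition \ref{2.0.9} reduces this to two tasks for each case: first, to exhibit each dual algebra as the multiplier algebra $M(A)$ of an appropriate Banach algebra $A$ in such a way that the dual Banach algebra structure is the natural one from Proposition \ref{2.0.9}; and second, to verify that this $A$ is topologically left Noetherian, which for us will always be a previously proved theorem.

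For part (i), I would take $A = L^{\,1}(G)$ and use the standard fact that for a compact (indeed any locally compact) group the multiplier algebra of $L^{\,1}(G)$ is $M(G)$, with $M(G) = C_0(G)' = C(G)'$ (as $G$ is compact) carrying its canonical weak*-topology. Here one must check that this is precisely the dual Banach algebra structure on $M(A)$ that appears in the hypotheses of Proposition \ref{2.0.9}, so that the proposition applies verbatim. Since $G$ is compact and metrisable, Theorem \ref{1.2} tells us that $L^{\,1}(G)$ is topologically left Noetherian, and then Proposition \ref{2.0.9} yields that $M(G)$ is weak*-topologically left Noetherian. For part (ii), I would take $A = \A(E)$ (equivalently $\K(E)$, which coincide when $E$ has AP) and use that for a reflexive $E$ with AP the multiplier algebra of $\A(E)$ is $\B(E)$, a dual Banach algebra with predual the projective tensor product $E \hat{\otimes} E'$. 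Since $E$ is separable and reflexive, both $E$ and $E' $ are separable, so Theorem \ref{1.3} gives that $\K(E)$ is topologically \emph{left and right} Noetherian; applying Proposition \ref{2.0.9} (and its evident right-handed analogue) then gives that $\B(E)$ is weak*-topologically left and right Noetherian.

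The main obstacle, as I see it, is not the Noetherianity input — that is handed to us by Theorems \ref{1.2} and \ref{1.3} — but rather the verification that these classical identifications $M(L^{\,1}(G)) = M(G)$ and $M(\A(E)) = \B(E)$ are compatible with the specific dual Banach algebra framework of Proposition \ref{2.0.9}. One needs to confirm that the hypotheses of that proposition are genuinely met: that $A$ sits inside $M(A)$ as a weak*-dense ideal in the appropriate sense, that the predual identified above induces on $M(A)$ exactly the weak*-topology used in the statement, and that the separability hypotheses transfer correctly (e.g.\ separability of $E$ and reflexivity giving separability of $E'$). Each of these is a matter of citing or recalling a well-known identification rather than a substantive new argument, so I would keep this verification brief, stating the identifications and the relevant references, and then let Proposition \ref{2.0.9} do the work.
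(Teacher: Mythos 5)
Your proposal is correct and takes essentially the same route as the paper, whose entire proof of Corollary \ref{1.4} is to invoke Proposition \ref{2.0.9} together with Theorems \ref{1.2} and \ref{1.3} via the standard identifications $M(L^{\,1}(G)) \cong M(G)$ and $M(\K(E)) \cong \B(E)$. The compatibility checks you flag (weak*-density of $A$ in $M(A)$ via Lemma \ref{2.0.5}, separability of $E'$ from separability and reflexivity of $E$, and that $E'$ has BAP so that Theorem \ref{1.3}(ii) and the right-handed analogue of Proposition \ref{2.0.9} apply) are precisely the routine points the paper leaves to its preliminaries, so nothing further is needed.
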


We then consider a more restricted class of Banach algebras, and we formulate an abstract approach for relating the ideal structure of a Banach algebra $A$ belonging to this class to the weak*-ideal structure of $M(A)$ (Theorem \ref{2.0.11b}). In Proposition \ref{2.0.11} we use this to show that, for this class, weak*-topological left Noetherianity of $M(A)$ is equivalent to a $\Vert \cdot \Vert$-topological condition on $A$. In Section 6 we demonstrate how Theorem \ref{2.0.11b} gives a unified strategy for classifying the weak*-closed left ideals of both $M(G)$, for $G$ a compact group, and $\B(E)$, for $E$ a reflexive Banach space with AP. We then observe that this leads to classifications of the closed right submodules of the predules.

Finally, we mention that in \cite{LW} the author of the present work together with Niels Laustsen will show that there is a certain reflexive Banach space $E$ which has AP such that $\B(E)$ fails to be weak*-topologically left Noetherian. This is the only example of a dual Banach algebra that we know of that fails to have this property.

\section{Preliminaries}		\label{S3.2}
\noindent
We first fix some general notation. Given a locally compact group $G$, we denote by $L^{\, 1}(G)$ the Banach algebra of integrable functions on $G$, which we refer to as the group algebra of $G$. We write $M(G)$ for the  Banach algebra of complex, regular Borel measures on $G$, known here as the measure algebra. We write $A(G)$ for the Fourier algebra of $G$, as defined by Eymard in \cite{E}. We write $C(G)$ for the linear space of complex-valued functions on $G$, and $C_0(G)$ for the subspace consisting of functions vanishing at infinity. Of course, $C_0(G)$ is a Banach space with the supremum norm; when $G$ is compact $C(G) = C_0(G)$, and we prefer the former notation over the latter.

By a representation of $G$ we implicitly mean a continuous unitary representation of $G$ on a Hilbert space. We write $\widehat{G}$ for the unitary dual of $G$, and a typical element of $\widehat{G}$ will be represented as $(\pi, H_\pi)$, where $H_\pi$ is a Hilbert space, and $\pi$ is an irreducible representation of $G$ on $H_\pi$. 
Given an arbitrary representation $(\pi, H_\pi)$ of $G$, and vectors $\xi, \eta \in H_\pi,$ we write $\xi*_\pi \eta$ for the function on $G$ defined by $t \mapsto \langle \pi(t) \xi, \eta \rangle \ (t \in G)$. 
We shall denote the modular function on $G$ by $\Delta$. We also use the notation  $\widecheck{f}(t) = f(t^{-1}) \ (t \in G)$ for $f \in L^{\, 1}(G)$.

For a Banach  algebra $A$ we denote by $A^\sharp$ the  (conditional) unitisation of $A$, and by $M(A)$ the multiplier algebra of $A$. We write $\LID(A)$ for the lattice of closed left ideals of $A$, and $\RID(A)$ for the lattice of closed right ideals. 
We denote the left action of $A$ on its dual by $a \cdot \lambda$ for $a \in A$ and $\lambda \in A'$, and set $A \cdot A' = \{a \cdot \lambda: a \in A, \ \lambda \in A' \}$.
 Usually $A$ will have a bounded approximate identity, in which case, by Cohen's
  factorisation theorem, $A \cdot A' = \overline{\spn}(A \cdot A')$. We use similar notation for the right action.

Given a Banach space $E$, we shall denote its dual space by $E'$. We write $\B(E)$ for the algebra of bounded linear operators $E \rightarrow E$. We write $\K(E)$ for the ideal of compact operators, $\A(E)$ for the approximable operators, and $\mathcal{F}(E)$ for the finite rank operators. We write $\SUB(E)$ for the lattice of closed linear subspaces of $E$. Given $x \in E$ and $\lambda \in E'$, we write $x \otimes \lambda$ for the rank one operator $y \mapsto \lambda(y)x \ (y \in E)$.

Given subsets $X \subset E$ and $Y \subset E'$, we use the notation
$$X^\perp = \{ \lambda \in E' : \langle x, \lambda \rangle = 0 \ (x \in X) \}, \qquad
Y_\perp = \{ u \in E : \langle u, \varphi \rangle = 0 \ (\varphi \in Y) \},$$
and we recall the well-known formulae
\begin{equation}		\label{eq2.1}
(X^\perp)_\perp = \overline{\spn}(X), \qquad (Y_\perp)^\perp = \overline{\spn}^{w^*}(Y).
\end{equation}

A Banach space $E$ is said to have the \textit{approximation property}, or simply \textit{AP}, if, whenever $F$ is another Banach space, we have 
$\A(F, E) = \K(F, E)$. 
There is also an equivalent formulation of the approximation property which has some useful generalizations: a Banach space $E$ has AP if and only if, for every compact subset $K \subset E$ and every $\eps >0$, there exists $T \in \mathcal{F}(E)$ such that $\Vert T x - x \Vert < \eps \ (x \in K)$ \cite[Theorem 3.4.32]{Meg}. We say that $E$ has the \textit{bounded approximation property}, or \textit{BAP}, if there exists a constant $C>0$ such that the operator $T$ can be chosen to have norm at most $C$. Clearly BAP implies AP.
Moreover, a reflexive Banach space with AP has BAP \cite[Theorem 3.7]{Cas}.
Many Banach spaces have the bounded approximation property: for instance any Banach space with a Schauder basis \cite[Theorem 4.1.33]{Meg} has BAP, and it can be deduced from this that any Hilbert space has BAP. The Banach space $\B(H)$, for $H$ an infinite-dimensional Hilbert space, does not even have AP \cite{Sz}.

We recall from \cite{R} that a \textit{dual Banach algebra} is a Banach algebra $A$ which is isomorphically a dual Banach space, in such a way that the multiplication is separately weak*-continuous. Equivalently, a Banach algebra with (isomorphic) predual $X$ is a dual Banach algebra if and only if $X$ may be identified with a closed $A$-submodule of $A'$. Examples include the measure algebra $M(G)$ of a locally compact group $G$, with predual $C_0(G)$, as well as $\B(E)$ for a reflexive Banach space $E$, with predual given by $E \widehat{\otimes} E'$, where 
$\widehat{\otimes}$ denotes the projective tensor product of Banach spaces. 

Recall that a \textit{semi-topological algebra} is a pair $(A, \tau)$, where $A$ is an algebra, and $\tau$ is a topology on $A$ such that $(A, +, \tau)$ is a topological vector space, and such that multiplication on $A$ is separately continuous. 
%If $\tau$ is a locally convex topology, then we say that $(A, \tau)$ is a locally convex semi-topological algebra.
For example, a dual Banach algebra with its weak*-topology is a semi-topological algebra.

Let $(A, \tau)$ be a semi-topological algebra. Let $I$ be a closed left ideal of$A$, and let $n \in \N$. We say that $I$ is \textit{$\tau$-topologically generated} by elements $x_1, \ldots, x_n \in I$ if 
$$I = \overline{A^\sharp x_1+ \cdots +A^\sharp x_n}.$$
We say that $I$ is \textit{$\tau$-topologically finitely-generated} if there exist $n \in \N$ and $x_1, \ldots, x_n \in I$ which $\tau$-topologically generate $I$. We say that $I$ is \textit{$\tau$-topologically principal} if $I$ is of the form $\overline{A^\sharp x}$, for some $x \in I$. We say that $A$ is \textit{$\tau$-topologically left Noetherian} if every closed left ideal of $A$ is $\tau$-topologically finitely-generated. For example, we shall often discuss weak*-topologically left Noetherian dual Banach algebras. When the topology is the norm topology on a Banach algebra we may simply speak of ``topologically finitely-generated left ideals'' et cetera. 

Analogously we may define \textit{$\tau$-topologically finitely-generated right ideals}, as well as \textit{$\tau$-topologically right Noetherian algebras}.  If the algebra in question is commutative we usually drop the words ``left'' and ``right''.

We note that when a semi-topological algebra $A$ has a left approximate identity we have 
$$\overline{A^\sharp x_1 + \cdots + A^\sharp x_n} = \overline{Ax_1 + \cdots + Ax_n},$$
  for each $n \in \N,$ and each $x_1, \ldots, x_n \in A$. When this is the case we usually drop the unitisations in order to ease notation. For example, in the proof of Theorem \ref{2.1.2} below, 
  we shall write
   $\overline{L^{\, 1}(G)*g}$ in place of $\overline{L^{\, 1}(G)^\sharp*g}$, for $G$ a locally compact group and $g \in L^{\, 1}(G)$.

The following lemma will be invaluable throughout this article.

\begin{lemma} \label{2.1.1}
Let $A$ be a semi-topological algebra with a left approximate identity. Let $J$ be a dense right ideal of $A$. Then $J$ intersects every closed left ideal of $A$ densely.
\end{lemma}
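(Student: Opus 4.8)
The plan is to prove the statement directly: fix a closed left ideal $I$ and an arbitrary $x \in I$, and show that $x \in \overline{J \cap I}$. The crucial algebraic observation is that for any $j \in J$ the product $jx$ automatically lies in $J \cap I$: it lies in $J$ because $J$ is a right ideal and $x \in A$, and it lies in $I$ because $I$ is a left ideal and $j \in A$. Thus it suffices to approximate $x$ by products of the form $jx$ with $j \in J$.

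To produce such approximations I would combine the left approximate identity with the density of $J$. Let $(e_\alpha)$ be a left approximate identity, so that $e_\alpha x \to x$. Since $J$ is dense, each $e_\alpha$ can be approximated by elements of $J$; and because multiplication is separately continuous, right multiplication by the fixed element $x$ is a continuous map, so approximating $e_\alpha$ by some $j \in J$ yields $jx$ close to $e_\alpha x$. Chaining these two approximations gives elements $jx \in J \cap I$ arbitrarily close to $x$.

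The one point requiring care is the combination of these two limits inside a general topological vector space, where we cannot simply multiply limits. I would argue at the level of neighbourhoods of $0$. Given a neighbourhood $U$ of $0$, continuity of addition furnishes a neighbourhood $V$ of $0$ with $V + V \subseteq U$. First use the approximate identity to fix $\alpha$ with $e_\alpha x - x \in V$. Then, using continuity of right multiplication by $x$ at the point $e_\alpha$, choose a neighbourhood $W$ of $e_\alpha$ with $Wx \subseteq e_\alpha x + V$; density of $J$ provides some $j \in J \cap W$, whence $jx - e_\alpha x \in V$. Adding, $jx - x \in V + V \subseteq U$, while $jx \in J \cap I$ by the algebraic observation. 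As $U$ was arbitrary, this yields $x \in \overline{J \cap I}$, and hence $\overline{J \cap I} = I$.

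The main obstacle is therefore purely topological: reconciling the two approximation steps in the absence of joint continuity of the product. Everything hinges on the separate continuity of multiplication, which lets us treat right multiplication by the fixed $x$ as a genuinely continuous linear map and so transport the density of $J$ through the product; once this is set up, the trapping of $jx$ in both ideals is immediate and completes the argument.
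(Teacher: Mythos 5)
Your proof is correct and takes essentially the same approach as the paper: the paper perturbs the left approximate identity $(e_\alpha)$ into $J$ (choosing $f_{\alpha,U} \in J$ with $e_\alpha - f_{\alpha,U} \in U$) and then uses that $f_{\alpha,U}\,a \in J \cap I$ converges to $a$, which is precisely your argument with the perturbation done once and for all rather than per element. Indeed, your explicit $V + V \subseteq U$ bookkeeping via separate continuity of the product is exactly the verification the paper compresses into the parenthetical remark that the perturbed net is ``easily seen'' to be a left approximate identity.
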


\begin{proof}
Let $(e_\alpha)$ be a left approximate identity for $A$, which we may assume belongs to $J$ (if not, then for each open neighbourhood of the origin $U,$ and each index $\alpha$, choose $f_{\alpha, U} \in J$ such that $e_\alpha - f_{\alpha, U} \in U$. Then $(f_{\alpha, U})$ is easily seen to be a left approximate identity for $A$.) Let $I$ be a closed left ideal of$A$ and let $a \in I$. Then for every index $\alpha$ we have $e_\alpha a \in J \cap I$. Since $a = \lim_{\alpha} e_{\alpha} a \in \overline{J \cap I}$, and $a$ was arbitrary, it follows that $\overline{J \cap I} = I$, as required.
\end{proof}

Next we show that $\tau$-topological left Noetherianity is stable under taking quotients and extensions.
For this lemma only we drop the $\tau$s and write ``topologically left Noetherian'' et cetera, even though we are not necessarily talking about a topology induced by a norm.

\begin{lemma}		\label{2.1.1a}
Let $A$ be a semi-topological algebra, and let $I$ be a closed (two-sided) ideal of$A$.
	\begin{enumerate}
		\item[\rm (i)] If $A$ is topologically left Noetherian then so is $A/I$.
		\item[\rm (ii)] Suppose that both $I$ and $A/I$ are topologically left Noetherian. Then so is $A$.
		\item[\rm (iii)] A is topologically left Noetherian if and only if $A^\sharp$ is topologically left Noetherian.
	\end{enumerate}
\end{lemma}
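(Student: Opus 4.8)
The plan is to prove the three parts of Lemma 2.1.1a in order, since parts (ii) and (iii) are likely to lean on the machinery of part (i). Throughout, the topology $\tau$ is fixed on $A$, and I must be careful to use the quotient topology on $A/I$ and the natural extension of $\tau$ to the unitisation $A^\sharp$.

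\begin{proof}[Proof sketch]
For part (i), let $q \colon A \to A/I$ be the quotient map, which is a continuous, open, surjective algebra homomorphism. The key observation is that closed left ideals of $A/I$ pull back along $q$ to closed left ideals of $A$: if $J$ is a closed left ideal of $A/I$, then $q^{-1}(J)$ is a closed left ideal of $A$ (it contains $I$). By hypothesis $q^{-1}(J)$ is $\tau$-topologically finitely-generated, say by $x_1, \ldots, x_n$, meaning $q^{-1}(J) = \overline{A^\sharp x_1 + \cdots + A^\sharp x_n}$. I would then apply $q$ and use that $q$ is continuous with dense range to conclude $J = q(q^{-1}(J)) \subseteq \overline{(A/I)^\sharp q(x_1) + \cdots + (A/I)^\sharp q(x_n)}$, with the reverse inclusion being clear. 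The one point requiring care is the interaction between $q$ and the closure: because $q$ is an open map, images of dense subsets are dense in their images, and since $J$ is closed and $q$ is surjective, $q$ maps $q^{-1}(J)$ onto $J$; combining these gives the displayed density. Thus $q(x_1), \ldots, q(x_n)$ topologically generate $J$.

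For part (ii), I would take an arbitrary closed left ideal $L$ of $A$ and analyse it via $I$ and $A/I$. The natural decomposition is the short exact sequence relating $L \cap I$, a closed left ideal of $I$, and $q(L)$, whose closure $\overline{q(L)}$ is a closed left ideal of $A/I$. By hypothesis $I$ is topologically left Noetherian, so $L \cap I$ is topologically generated by finitely many elements $y_1, \ldots, y_m \in L \cap I$; and $A/I$ is topologically left Noetherian, so $\overline{q(L)}$ is topologically generated by finitely many elements, which I lift to $z_1, \ldots, z_k \in L$. The claim is that $y_1, \ldots, y_m, z_1, \ldots, z_k$ together topologically generate $L$. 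To verify this, given $a \in L$, I first approximate $q(a)$ within $\overline{q(L)}$ by a $\tau$-limit of combinations of the $q(z_j)$, lift back to approximate $a$ modulo $I$ by combinations of the $z_j$, so that the residual error lies (approximately) in $L \cap I$, and then approximate that residual using the $y_i$. The main obstacle in this part is the standard difficulty with topological (rather than purely algebraic) generation in the extension step: the ``residual'' $a - \sum b_j z_j$ need not lie exactly in $L \cap I$, only approximately, so I will need an argument ensuring that elements of $L$ whose image under $q$ is small can be approximated by elements genuinely in $L \cap I$. This is where the $\tau$-topological structure and the continuity of $q$ must be used delicately, possibly invoking a diagonal or $\eps/2$ argument to absorb the two layers of approximation.

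For part (iii), the ``only if'' direction follows from part (i) applied to the closed ideal $A$ of $A^\sharp$ together with the observation that $A^\sharp / A$ is one-dimensional, hence trivially topologically left Noetherian, so part (ii) gives that $A^\sharp$ is topologically left Noetherian once $A$ is. For the ``if'' direction, I would note that a closed left ideal of $A$ is also a closed left ideal of $A^\sharp$ (the unit acts as the identity), so if every closed left ideal of $A^\sharp$ is finitely generated then in particular so is every closed left ideal of $A$; a small check is needed that generation over $(A^\sharp)^\sharp = A^\sharp$ coincides with generation over $A^\sharp$, which is immediate. I expect part (ii) to be the genuine crux of the lemma, with parts (i) and (iii) being comparatively routine consequences once the quotient and extension mechanics are set up.
\end{proof}
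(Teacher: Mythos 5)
Your parts (i) and (iii) are sound. In (i) continuity of $q$ alone already gives $q(\overline{S}) \subseteq \overline{q(S)}$, so the appeal to openness is harmless but unnecessary, and your argument for (iii) is exactly the paper's: the author dismisses (i) and (ii) as ``routine arguments'' and only writes out (iii), deducing the forward direction from (ii) applied to $A \triangleleft A^\sharp$ with $A^\sharp/A \cong \C$, and the converse from the fact that every closed left ideal of $A$ is a closed left ideal of $A^\sharp$ (your parenthetical check that $(A^\sharp)^\sharp x_i$-generation reduces to $A^\sharp x_i$-generation is the right one to make; your attribution of the forward direction to ``part (i)'' is a slip, but you correctly invoke (ii) in the same sentence).

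Part (ii), however, has a genuine gap --- in fact two. First, the generators of $\overline{q(L)}$ supplied by the Noetherian hypothesis on $A/I$ lie in $\overline{q(L)}$, not in $q(L)$, so they need not have any preimage in $L$: the step ``which I lift to $z_1, \ldots, z_k \in L$'' is unjustified. What your plan actually requires is the stronger statement that $\overline{q(L)}$ is topologically generated by finitely many elements of the dense sub-left-ideal $q(L)$, and this does not follow from topological Noetherianity, since Definition \ref{1.1} carries no ascending chain condition (as the paper itself emphasises): for instance, in $C[0,1]$ the closed ideal $J$ of functions vanishing at $0$ contains the dense sub-ideal $D$ of functions vanishing on a neighbourhood of $0$, and no finite subset of $D$ topologically generates $J$. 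Second, the residual step you flag cannot be closed by an $\varepsilon/2$ or diagonal argument: even with $z_j \in L$ such that the $q(z_j)$ generate $\overline{q(L)}$, one only gets $r_\alpha := a - v_\alpha \in L$ with $q(r_\alpha) \rightarrow 0$, and this does not make $r_\alpha$ approximable from $L \cap I$. That would require $q|_L$ to be relatively open onto its image --- in the Banach-algebra case, equivalently, that the injection $L/(L \cap I) \rightarrow A/I$ be bounded below, i.e.\ that $L + I$ be closed --- which fails for general pairs of closed subspaces and is granted by no hypothesis of the lemma. Note that in the one place the paper actually uses (ii), namely the proof of (iii) where the quotient $A^\sharp/A$ is one-dimensional, both difficulties evaporate: $q(L)$ is finite-dimensional and hence closed, generators lift into $L$, and the correction $b - \mu x \in L \cap A$ is exact. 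So your sketch establishes the case the paper needs, but as written it does not prove (ii) in the stated generality; a genuinely new idea is needed to produce generators of $\overline{q(L)}$ inside $q(L)$ (or to circumvent them), and the two layers of approximation must be controlled by something better than continuity of $q$.
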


\begin{proof}
Parts (i) and (ii) follow from routine arguments. For part (iii) we may suppose that $A$ is non-unital for otherwise the result is trivial. If $A$ is topologically left Noetherian then, since $A^\sharp / A \cong \C$ is topologically left Noetherian, it follows from (ii) that $A^\sharp$ is also. The converse follows from the fact that every closed left ideal of$A$ is also a closed left ideal of$A^\sharp$.
\end{proof}

\section{Examples From Abstract Harmonic Analysis}		\label{S3.3}
\noindent
In this section we shall prove Theorem \ref{1.2}. It is surely easiest to determine whether or not a Banach algebra is topologically left Noetherian when we know what its closed left ideals are. Fortunately, this is the case for the group algebra of a compact group, as well as for the Fourier algebra of certain discrete groups, including all amenable groups. As a sort of warm up for the proof of Theorem \ref{1.2} we shall show that, for such groups, the Fourier algebra $A(G)$ is topologically Noetherian if and only if $G$ is countable. Both proofs involve similar ideas.

\begin{proposition}		\label{2.1.3a}
Let $G$ be a discrete group such that $f \in \overline{A(G)f}$ for all $f \in A(G)$. Then $A(G)$ is topologically Noetherian if and only if $G$ is countable.
\end{proposition}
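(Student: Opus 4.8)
The plan is to use throughout that, since $G$ is discrete, $A(G)$ is a commutative Banach algebra of functions on $G$ with $A(G) \subseteq c_0(G)$, that the point masses $\delta_t$ lie in $A(G)$ (with $\Vert \delta_t \Vert = 1$) for every $t \in G$, and that the $A(G)$-norm dominates the supremum norm, so that convergence in $A(G)$ implies pointwise convergence. I also record the identity $\delta_t \cdot g = g(t)\,\delta_t$ for $g \in A(G)$, which encodes the diagonal action of $A(G)$ on the point masses. For the forward implication I argue contrapositively: if $G$ is uncountable, then $A(G)$ is not topologically Noetherian, and for this it suffices to exhibit a single closed ideal that is not topologically finitely-generated. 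I claim $A(G)$ itself works. Indeed, suppose $A(G) = \overline{A(G)^\sharp f_1 + \cdots + A(G)^\sharp f_n}$. Every element of $A(G)^\sharp f_1 + \cdots + A(G)^\sharp f_n$ is supported in $S := \bigcup_{i=1}^n \supp f_i$ (where for $f \in A(G)$ I write $\supp f = \{t : f(t) \neq 0\}$), and since the norm dominates the supremum norm this support condition passes to norm limits; hence every element of $A(G)$ is supported in $S$. But each $f_i \in c_0(G)$ has countable support, so $S$ is countable, whereas $\delta_t \in A(G)$ for every one of the uncountably many $t \in G$ --- a contradiction. This direction uses none of the hypothesis on $G$.

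For the converse, suppose $G$ is countable; then $A(G)$ is separable, and I aim to show that every closed ideal $I$ is in fact topologically principal. Write $C = \{t \in G : f(t) \neq 0 \text{ for some } f \in I\}$, a countable set, say $C = \{s_1, s_2, \ldots\}$. First, for each $t \in C$ choose $g \in I$ with $g(t) \neq 0$; then $\delta_t = g(t)^{-1}\, \delta_t \cdot g \in I$, so every $\delta_t$ with $t \in C$ lies in $I$. Consequently I may set $f = \sum_j c_j \delta_{s_j}$, with scalars $c_j \neq 0$ chosen so that $\sum_j |c_j| < \infty$: this series converges absolutely in $A(G)$, its sum lies in the closed ideal $I$, and $\supp f = C$ because the $\delta_{s_j}$ are disjointly supported. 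Since $\delta_{s_j} = c_j^{-1}\, \delta_{s_j} \cdot f \in A(G) f$ for each $j$, I obtain $\overline{\spn}\{\delta_t : t \in C\} \subseteq \overline{A(G) f} \subseteq I$.

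The crux is therefore the reverse inclusion $I \subseteq \overline{\spn}\{\delta_t : t \in C\}$, that is, that the finitely-supported functions lying in $I$ are dense in $I$; granting this, the displayed chain collapses to $I = \overline{A(G) f}$ and $I$ is topologically principal, which finishes the proof. Note that, by the argument of the previous paragraph, the ideal $J$ of all finitely-supported functions satisfies $J \cap I = \spn\{\delta_t : t \in C\}$, so the required density is precisely the statement that the dense ideal $J$ meets $I$ densely. I expect to obtain this from Lemma \ref{2.1.1}, applied with this $J$. The difficulty is that Lemma \ref{2.1.1} nominally requires a genuine (left) approximate identity and the density of $J$ in $A(G)$, whereas the hypothesis $f \in \overline{A(G) f}$ furnishes only a pointwise factorisation: for each fixed $f$ there is a net $(g_k)$ in $A(G)$ with $g_k f \to f$. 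It is exactly here that the hypothesis must do its work, playing the role that a finitely-supported bounded approximate identity would play for an amenable $G$. Verifying that this weaker, element-by-element factorisation nonetheless drives the synthesis argument --- equivalently, that every $f \in A(G)$ is approximable in $A(G)$-norm by finitely-supported functions on $\supp f$ --- is the main obstacle, and is the step I would treat most carefully.
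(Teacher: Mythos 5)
Your forward direction is correct and is essentially the paper's own argument (the generators lie in $c_0(G)$, so their supports are countable, and the support constraint survives norm limits); your use of the point masses to force $\mathcal{S}=G$ is a cosmetic variant of the paper's conclusion. The converse, however, contains a genuine gap, and you have located it accurately yourself: the entire proof hinges on the inclusion $I \subseteq \overline{\spn}\{\delta_t : t \in C\}$, i.e.\ on the statement that a closed ideal of $A(G)$ is determined by its zero set, and this is left unproved. This is not a deferrable verification: it is exactly the point at which the hypothesis $f \in \overline{A(G)f}$ must do its work, and it is what the paper imports wholesale from \cite[Proposition 2.2]{KL}, which classifies the closed ideals of $A(G)$ as the sets $I(E) = \{f \in A(G) : f(x)=0, \ x \in E\}$ under precisely this hypothesis. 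You are also right that Lemma \ref{2.1.1} cannot be quoted as stated: its proof moves a single left approximate identity net across every element of $I$ simultaneously, whereas the hypothesis supplies only an element-by-element factorisation, so the lemma's hypotheses are genuinely not met.

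That said, the gap closes with a short direct argument, so your strategy is salvageable and in fact yields a self-contained proof that avoids the citation. Since $G$ is discrete, the finitely supported functions are dense in $A(G)$. Given $f \in I$ with $f \neq 0$ and $\eps > 0$, the hypothesis provides $g \in A(G)$ with $\Vert f - gf \Vert < \eps/2$; choose a finitely supported $g_0 \in A(G)$ with $\Vert g - g_0 \Vert < \eps/(2\Vert f \Vert)$. Then $g_0 f \in I$ is finitely supported, vanishes wherever $f$ does (multiplication in $A(G)$ is pointwise), hence lies in $\spn\{\delta_t : t \in C\}$, and $\Vert f - g_0 f\Vert < \eps$. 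This gives $I \subseteq \overline{\spn}\{\delta_t : t \in C\}$, and your chain
$$\overline{\spn}\{\delta_t : t \in C\} \subseteq \overline{A(G)f} \subseteq I \subseteq \overline{\spn}\{\delta_t : t \in C\}$$
collapses to $I = \overline{A(G)f}$. With this patch your argument is correct, runs parallel to the paper's (your series $\sum_j c_j \delta_{s_j}$ is the paper's generator $\sum_n n^{-2}\delta_{x_n}$), and has the mild advantage of being self-contained where the paper invokes Kaniuth--Lau; the preliminary steps you verified ($\delta_t \cdot g = g(t)\delta_t$, $\Vert \delta_t \Vert = 1$, absolute convergence of the series in $I$) are all sound.
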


\begin{proof}
Given $E \subset G$, write $I(E) = \{f \in A(G) : f(x) = 0, \ x \in E \}$. By \cite[Proposition 2.2]{KL} the closed ideals of $A(G)$ are all of the from $I(E)$ for some subset $E$ of $G$.

Suppose first that $G$ is countable and let $I \triangleleft A(G)$ be closed. Let $E \subset G$ be such that $I = I(E)$, and enumerate $G \setminus E = \{ x_1, x_2, \ldots, \}$. Define
$g = \sum_{n=1}^\infty \frac{1}{n^2} \delta_{x_n} \in A(G)$. It is clear that $\supp g = G \setminus E$, and hence that
$$\left\{ x \in G : f(x) = 0 \text{ for every } f \in \overline{A(G)^\sharp g} \right\} = E.$$
It follows from the classification of the  closed ideals of $A(G)$ given above that 
$I = \overline {A(G)^\sharp g}$. As $I$ was arbitrary we conclude that $A(G)$ is topologically Noetherian.

Now suppose that $A(G)$ is topologically Noetherian. 
 Then there exist $n \in \N$ and $h_1, \ldots, h_n \in A(G)$ such that $A(G) = \overline{A(G)^\sharp  h_1 + \cdots + A(G)^\sharp  h_n}$. Since $A(G) \subset c_0(G)$, every function in $A(G)$ must have countable support. Hence $\mathcal{S} := \bigcup_{i=1}^n \supp h_i$ is a countable set. Every $f \in A(G)^\sharp  h_1 + \cdots + A(G)^\sharp  h_n$ has $\supp f \subset \mathcal{S}$, and of course, after taking closures, we see that this must hold for every $ f \in A(G)$. This clearly forces $\mathcal{S} = G$, so that $G$ must be countable.
\end{proof}

\textit{Remark.} The hypothesis of the previous proposition is satisfied by any discrete, amenable group, since in this case $A(G)$ has a bounded approximate identity by Leptin's Theorem, as well as many other groups including the free group on $n$ generators for each $n \in \N$ \cite{S}. The question of whether there are any locally compact groups which do not satisfy $f \in \overline{fA(G)}$ for every $f \in A(G)$ is considered a difficult open problem.
\vskip 2mm

We now recall some facts about compact groups. Firstly, for $G$ a compact group
the closed left ideals of $L^{\, 1}(G)$  have the following characterisation \cite[Theorem 38.13]{HR2}:

\begin{theorem} 	\label{2.1.2a}
Let $G$ be a compact group, and let $I$ be a closed left ideal of $L^{\, 1}(G)$. Then there exist linear subspaces $E_\pi \subset H_\pi \ (\pi \in \widehat{G})$ such that 
$$I = \left\{ f \in L^{\, 1}(G) : \pi(f)(E_\pi) = 0, \ \pi \in \widehat{G} \right\}.$$ 
\end{theorem}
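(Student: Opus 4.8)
The plan is to use the operator-valued Fourier transform $f \mapsto (\pi(f))_{\pi \in \widehat{G}}$, where $\pi(f) = \int_G f(t)\,\pi(t)\,\dd t \in \B(H_\pi)$, together with the Peter--Weyl theory of compact groups: each $H_\pi$ is finite-dimensional, the transform is injective and satisfies $\pi(g*f) = \pi(g)\pi(f)$, the map $f \mapsto \pi(f)$ carries $L^{\,1}(G)$ onto all of $\B(H_\pi)$, and the linear span of matrix coefficients (the trigonometric polynomials) is dense in $L^{\,1}(G)$. I would first record the finite-dimensional linear-algebra fact that every left ideal $L$ of a full matrix algebra $\B(H_\pi)$ has the form $\{T : T|_{E}=0\}$ for the subspace $E = \bigcap_{T \in L} \ker T$; this follows by writing $L = \B(H_\pi)p$ for an idempotent $p$ and observing that $E = \ker p$.

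Now fix a closed left ideal $I$ and set $E_\pi = \bigcap_{f \in I}\ker\pi(f) \subseteq H_\pi$. Writing $J$ for the right-hand side of the claimed identity, the inclusion $I \subseteq J$ is immediate from the definition of $E_\pi$. The first substantive step is to show $\pi(I) = \{T \in \B(H_\pi) : T|_{E_\pi} = 0\}$ for every $\pi$: indeed $\pi(I)$ is a linear subspace of $\B(H_\pi)$, and since $\pi$ maps $L^{\,1}(G)$ onto $\B(H_\pi)$ and $\pi(g*f)=\pi(g)\pi(f)$, it is a left ideal; its common kernel is exactly $E_\pi$, so the matrix-algebra fact applies. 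Since $I \subseteq J$ and $\pi(J) \subseteq \{T:T|_{E_\pi}=0\}$ by construction, it follows that $\pi(J) = \pi(I)$ for every $\pi$ as well.

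The main work is the reverse inclusion $J \subseteq I$. Given $f \in J$ and a trigonometric polynomial $v$, the function $v*f$ has $\pi(v*f) = \pi(v)\pi(f)$ vanishing for all but the finitely many $\pi$ with $\pi(v)\neq 0$; for each such $\pi$ we have $\pi(f) \in \pi(J)=\pi(I)$, and as $\pi(I)$ is a left ideal, $\pi(v*f)\in\pi(I)$. Using the central idempotents $d_\pi\chi_\pi$, where $d_\pi = \dim H_\pi$ and $\chi_\pi$ is the character $t \mapsto \operatorname{tr}\pi(t)$ (so that $\pi'(d_\pi\chi_\pi)=\delta_{\pi',\pi}\,\id_{H_\pi}$), I can reconstruct an element of $I$ with the same Fourier transform as $v*f$: for each relevant $\pi$ choose $g_\pi \in I$ with $\pi(g_\pi)=\pi(v*f)$, and set $h = \sum_\pi (d_\pi\chi_\pi)*g_\pi$, which lies in $I$ because $I$ is a left ideal; comparing transforms and invoking injectivity gives $v*f = h \in I$. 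Finally I would choose a sequence $(v_j)$ of trigonometric polynomials with $v_j * f \to f$ in $L^{\,1}(G)$ --- obtained by $L^{\,1}$-approximating a bounded approximate identity by trigonometric polynomials, using their density --- so that $f = \lim_j v_j * f \in \overline{I} = I$.

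The step I expect to be the main obstacle is this last convergence: one cannot simply truncate the Fourier expansion of $f$, since partial Fourier sums need not converge in $L^{\,1}$-norm, even on the circle. The resolution is to smooth the truncation, that is, to replace sharp spectral cut-offs by a genuine approximate identity built from trigonometric polynomials, which is precisely what forces the diagonal-approximation argument above rather than a direct appeal to Fourier inversion.
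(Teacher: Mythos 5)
Your proof is correct, but note that the paper itself offers no argument for this theorem: it is imported wholesale from Hewitt and Ross \cite[Theorem 38.13]{HR2}, so your submission is genuinely self-contained where the paper relies on a citation. Your route --- decompose via the operator-valued Fourier transform into finite-dimensional Peter--Weyl blocks, use surjectivity of $f \mapsto \pi(f)$ to see that $\pi(I)$ is a left ideal of $\B(H_\pi)$ with common kernel $E_\pi$, reconstruct elements of $I$ blockwise via central idempotents and injectivity of the transform, and smooth the spectral truncation with an approximate identity built from trigonometric polynomials (your caution that sharp partial Fourier sums need not converge in $L^{\,1}$ is exactly right) --- is essentially the classical argument, and every step checks out, with one cosmetic normalization slip: under your convention $\pi(f) = \int_G f(t)\pi(t) \dd t$, which is the one giving $\pi(g*f) = \pi(g)\pi(f)$, the orthogonality relations yield $\sigma(d_\pi \overline{\chi_\pi}) = \delta_{\sigma, \pi}\, \id_{H_\pi}$, whereas $\sigma(d_\pi \chi_\pi) = \delta_{\sigma, \bar{\pi}}\, \id_{H_\sigma}$; so the central idempotents you convolve with should be $d_\pi \overline{\chi_\pi} = d_\pi \widecheck{\chi_\pi}$ rather than $d_\pi \chi_\pi$, which changes nothing structurally. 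It is also worth observing that your final approximation step can be shortcut with the paper's own machinery: since $T(G)$ is a dense two-sided ideal (Theorem \ref{2.1.2b}(ii)) and $L^{\,1}(G)$ has a bounded approximate identity, Lemma \ref{2.1.1} gives $\overline{J \cap T(G)} = J$ for your closed left ideal $J$, and for $g \in J \cap T(G)$ one writes $g = \sum_{\pi \in S} g_\pi$ with $S$ finite and $g_\pi \in T_\pi(G)$, picks $h_\pi \in I$ with $\pi(h_\pi) = \pi(g_\pi)$, and verifies $g_\pi = (d_\pi \overline{\chi_\pi}) * h_\pi \in I$ by comparing transforms --- eliminating the need to approximate an approximate identity by trigonometric polynomials. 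What your write-up buys over the paper's citation is transparency: the blockwise idempotent technique you develop is visibly the same one the paper exploits afterwards in its proof of Theorem \ref{2.1.2}.
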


Let $G$ be a compact group. Given $\pi \in \widehat{G}$ we write $T_{\pi}(G) = \spn \{ \xi *_\pi \eta : \xi, \eta \in H_\pi \}$, and we write 
$T(G) = \spn \{ \xi *_\pi \eta : \xi, \eta \in H_\pi, \pi \in \widehat{G} \}.$ We recall the following facts about these spaces from \cite{HR1, HR2}:

\begin{theorem}		\label{2.1.2b}
Let $G$ be a compact group. 
\begin{enumerate}
	\item[\rm (i)] Let $\sigma, \pi \in \widehat{G}$ with $\sigma \neq \pi$. Then $\sigma(\xi *_\pi \eta) = 0$.
	\item[\rm (ii)] The linear space $T(G)$ is a dense ideal in $L^{\, 1}(G)$.
	\item[\rm (iii)] For each $\pi \in \widehat{G}$ the space $T_\pi(G)$ is an ideal in $L^{\, 1}(G)$, and as an algebra $T_\pi(G) \cong M_{d_\pi}(G)$,
	where $d_\pi$ denotes the dimension of $H_\pi$.
\end{enumerate}
\end{theorem}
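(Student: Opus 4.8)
The plan is to deduce all three parts from the two cornerstones of the harmonic analysis of compact groups, namely the Peter--Weyl theorem and the Schur orthogonality relations, working throughout with normalised Haar measure. For part (i), I would test the operator $\sigma(\xi *_\pi \eta) \in \B(H_\sigma)$ against arbitrary vectors $\zeta, \omega \in H_\sigma$. Unwinding the definition of $\sigma(\cdot)$ expresses $\langle \sigma(\xi *_\pi \eta)\zeta, \omega \rangle$ as the integral over $G$ of the matrix coefficient $t \mapsto \langle \pi(t)\xi, \eta \rangle$ of $\pi$ against a matrix coefficient of $\sigma$; that is, as an $L^{\,2}(G)$-pairing of a coefficient of $\pi$ with one of $\sigma$. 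Since $\pi$ and $\sigma$ are inequivalent irreducible representations, the Schur orthogonality relations force this pairing to vanish for every $\zeta$ and $\omega$, and hence $\sigma(\xi *_\pi \eta) = 0$.

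For part (ii), the density of $T(G)$ in $L^{\,1}(G)$ is precisely the content of the Peter--Weyl theorem: the matrix coefficients span a subspace dense in $L^{\,2}(G)$, and since $G$ carries a probability measure the $L^{\,2}$-norm dominates the $L^{\,1}$-norm, so this span is dense in $L^{\,1}(G)$ as well. To see that $T(G)$ is an ideal, it is enough to show that each $T_\pi(G)$ is invariant under left and right convolution by $L^{\,1}(G)$. A change of variables shows that, for $g \in L^{\,1}(G)$, the convolution $(\xi *_\pi \eta) * g$ equals $(S\xi) *_\pi \eta$ for a suitable bounded operator $S$ on $H_\pi$ built from $g$, and symmetrically $g * (\xi *_\pi \eta)$ modifies the second variable $\eta$; in both cases the result lies again in $T_\pi(G)$. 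Thus $T(G) = \sum_{\pi \in \widehat{G}} T_\pi(G)$ is a two-sided ideal.

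For part (iii), that $T_\pi(G)$ is an ideal is already established in the previous step. For the algebra structure I would compute the convolution of two coefficients directly; Schur orthogonality for the single representation $\pi$ yields the clean identity
$$(\xi_1 *_\pi \eta_1) * (\xi_2 *_\pi \eta_2) = \tfrac{1}{d_\pi} \langle \xi_1, \eta_2 \rangle \, \xi_2 *_\pi \eta_1.$$
Fixing an orthonormal basis $e_1, \ldots, e_{d_\pi}$ of $H_\pi$ and setting $u_{ij} = d_\pi \,(e_j *_\pi e_i)$, this identity gives the matrix-unit relations $u_{ij} * u_{kl} = \delta_{jk}\, u_{il}$, while Schur orthogonality also shows that the $d_\pi^{\,2}$ coefficients $e_i *_\pi e_j$ are linearly independent and so span a $d_\pi^{\,2}$-dimensional space. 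The assignment $u_{ij} \mapsto E_{ij}$ therefore furnishes the desired isomorphism $T_\pi(G) \cong M_{d_\pi}(\C)$.

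The computational content of each part is light; the only real subtlety, and hence the main point requiring care, is the bookkeeping of conventions. The exact placement of inverses and complex conjugates in the definitions of the coefficients $\xi *_\pi \eta$, of the operators $\sigma(f)$, and of convolution determines which form of the Schur relations is invoked and, in particular, whether part (i) should read $\sigma \neq \pi$ or $\sigma \neq \bar\pi$ for the contragredient $\bar\pi$. Once a single consistent system of conventions is fixed, as in \cite{HR1, HR2}, each assertion collapses to a one-line orthogonality calculation.
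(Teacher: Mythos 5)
Your proof is correct and is essentially the paper's own: the paper merely records the formula $\langle \sigma(f)\zeta_1, \zeta_2\rangle = \int_G f(t)\langle \sigma(t)\zeta_1,\zeta_2\rangle \dd t$ and cites Hewitt--Ross for exactly the facts you establish --- the orthogonality relations for (i), Peter--Weyl density together with the ideal property for (ii), and the matrix-unit computation for (iii) --- so your write-up is a self-contained version of the same argument, and your convolution identity $(\xi_1 *_\pi \eta_1)*(\xi_2 *_\pi \eta_2) = \tfrac{1}{d_\pi}\langle \xi_1,\eta_2\rangle\, \xi_2 *_\pi \eta_1$ and the resulting relations $u_{ij}*u_{kl}=\delta_{jk}u_{il}$ both check out. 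The convention caveat in your final paragraph is in fact the one genuine subtlety: with the paper's literal definitions ($(\xi *_\pi \eta)(t) = \langle \pi(t)\xi,\eta\rangle$ and $\sigma(f) = \int_G f(t)\sigma(t)\dd t$) the integrand in (i) is an \emph{unconjugated} product of coefficients, so Schur orthogonality yields vanishing precisely when $\sigma$ is not equivalent to the contragredient $\bar\pi$, whereas the Hewitt--Ross convention $\hat f(\sigma) = \int_G f(t)\sigma(t^{-1})\dd t$ restores the statement exactly as written; either way the discrepancy is harmless for the paper's applications, since $\pi \mapsto \bar\pi$ is a bijection of $\widehat{G}$.
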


\begin{proof}
The formula 
$$\langle \sigma(f) \zeta_1, \zeta_2 \rangle = \int_G f(t) \langle \sigma(t) \zeta_1, \zeta_2 \rangle {\rm \, d}t,$$
for $f \in L^{\, 1}(G), \sigma \in \widehat{G}$ and $\zeta_1, \zeta_2 \in H_\sigma$ is well-known. Part (i) follows from this and the orthogonality 
relations \cite[Theorem 27.20 (iii)]{HR2}. Part (ii) follows from \cite[Theorem 27.20, Lemma 31.4]{HR2}, and part (iii) follows from \cite[Theorem 27.21]{HR2}.
\end{proof}

We now prove the main theorem of this section, Theorem \ref{2.1.2}. Observe that Theorem \ref{1.2} is simply ``(a) if and only if (c)''. The equivalence of conditions (b) and (c) has surely been noticed before,
but we include the proof to make our argument more transparent.

\begin{theorem} 	\label{2.1.2}
Let $G$ be a compact group. Then the following are equivalent:
\begin{enumerate}
\item[{\rm (a)}] $L^{\, 1}(G)$ is topologically left Noetherian;
\item[{\rm(b)}] $\widehat{G}$ is countable;
\item[{\rm(c)}] $G$ is metrisable.
\end{enumerate}
\end{theorem}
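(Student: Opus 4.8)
The plan is to prove the three implications forming a cycle, with the harmonic-analytic content concentrated in one direction. First I would establish the equivalence (b) $\iff$ (c), which is the classical fact that a compact group is metrisable precisely when its unitary dual is countable. One direction uses the Peter--Weyl theorem: when $G$ is metrisable, $C(G)$ is separable, and since $T(G)$ is dense in $C(G)$ and the matrix coefficients of distinct $\pi \in \widehat{G}$ are linearly independent (indeed orthogonal), only countably many $\pi$ can contribute, forcing $\widehat{G}$ countable. Conversely, if $\widehat{G}$ is countable, then $T(G) = \bigoplus_{\pi} T_\pi(G)$ exhibits a countable dense subset of $L^{\,1}(G)$ built from finitely many matrix coefficients per $\pi$, whence $L^{\,1}(G)$ is separable; separability of $L^{\,1}(G)$ for a compact group is equivalent to metrisability of $G$.

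For the main content I would prove (b) $\implies$ (a). Let $I$ be a closed left ideal. By Theorem \ref{2.1.2a} there are subspaces $E_\pi \subset H_\pi$ with
$$I = \left\{ f \in L^{\, 1}(G) : \pi(f)(E_\pi) = 0, \ \pi \in \widehat{G} \right\}.$$
The strategy is to build a single generator, or finitely many, mimicking the Fourier-algebra argument in Proposition \ref{2.1.3a}. For each $\pi$, the condition $\pi(f)(E_\pi)=0$ is a finite-dimensional linear constraint inside the matrix block $T_\pi(G) \cong M_{d_\pi}$, so within each block the corresponding ideal is algebraically finitely generated. Since $\widehat{G}$ is countable, enumerate it as $(\pi_n)$ and choose, in each block, elements $g_1^{(n)}, \ldots, g_{k_n}^{(n)} \in T_{\pi_n}(G) \cap I$ that algebraically generate the intersection of $I$ with that block; then form a norm-convergent series $g_j = \sum_n c_n\, g_j^{(n)}$ with rapidly decaying coefficients $c_n$, arranged so that finitely many $g_1, \ldots, g_m$ suffice (here $m = \max_n k_n$, using that each $d_\pi$ gives a uniformly bounded number of generators per block, namely at most $d_\pi$). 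Using Theorem \ref{2.1.2b}(i), multiplying $g_j$ by elements that concentrate on a single block recovers each $g_j^{(n)}$ in the closure, so $\overline{L^{\,1}(G)^\sharp g_1 + \cdots + L^{\,1}(G)^\sharp g_m}$ contains every block-piece of $I$; since $T(G) \cap I$ is dense in $I$ by Lemma \ref{2.1.1} (as $T(G)$ is a dense right ideal and $L^{\,1}(G)$ has a bounded approximate identity), this closure equals $I$.

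Finally I would prove (a) $\implies$ (b) by contraposition, again paralleling Proposition \ref{2.1.3a}. Suppose $\widehat{G}$ is uncountable. If $L^{\,1}(G)$ were topologically left Noetherian, then in particular $L^{\,1}(G)$ itself would be topologically generated by finitely many $h_1, \ldots, h_n$. Each $h_i$ has a Fourier support $\{\pi \in \widehat{G} : \pi(h_i) \neq 0\}$ that is countable (since $\sum_\pi d_\pi \|\pi(h_i)\|_{\mathrm{HS}}^2 < \infty$ forces all but countably many blocks to vanish). By Theorem \ref{2.1.2b}(i), any element of $L^{\,1}(G)^\sharp h_1 + \cdots + L^{\,1}(G)^\sharp h_n$, and hence of its closure, has Fourier support contained in the countable union $\bigcup_i \{\pi : \pi(h_i) \neq 0\}$; but a $\pi$ outside this union admits matrix coefficients $\xi *_\pi \eta \in L^{\,1}(G)$ with $\pi(\xi *_\pi \eta) \neq 0$, contradicting that the closure is all of $L^{\,1}(G)$.

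\medskip

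The main obstacle I anticipate is the construction in (b) $\implies$ (a): assembling finitely many global generators from the block data while controlling norm convergence and ensuring each block-component is recoverable in the closure. The key technical points are that the number of algebraic generators needed per block is bounded (by $d_\pi$, via the matrix-algebra structure of Theorem \ref{2.1.2b}(iii)), and that block-separation (Theorem \ref{2.1.2b}(i)) lets one isolate components by left multiplication; reconciling "boundedly many generators per block" with "finitely many overall" requires care in distributing the block generators across a fixed finite number of series, which is where the argument is most delicate.
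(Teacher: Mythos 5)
Your implication (b)~$\Rightarrow$~(a) has a genuine gap at exactly the point you flag as delicate, and it is not a matter of bookkeeping. You take $k_n \leq d_{\pi_n}$ generators for the block ideal $I \cap T_{\pi_n}(G)$ and set $m = \max_n k_n$, calling the generator count per block ``uniformly bounded''; but for a general compact group the degrees $d_\pi$ are unbounded (already for $G = SU(2)$), so $\max_n k_n$ need not be finite and your finitely many series $g_1, \ldots, g_m$ never materialize. The missing idea --- and the one the paper's proof rests on --- is that every left ideal of $M_d(\C)$ is \emph{principal}: since $M_d(\C)$ is semisimple, any left ideal is a direct summand of the regular module, hence of the form $M_d(\C)P$ for an idempotent $P$. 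Thus $k_n = 1$ for every block, and a single element $g = \sum_i i^{-2} \alpha_{\pi_i} P_{\pi_i} \in I$ (with $\alpha_{\pi_i} = \Vert P_{\pi_i}\Vert^{-1}$, so the series converges and $g \in I$ because $I$ is closed) topologically generates $I$. The paper verifies $I = \overline{L^{\,1}(G)*g}$ through Theorem \ref{2.1.2a}, checking via the block-orthogonality of Theorem \ref{2.1.2b}(i) that $\pi_j(g)\xi = j^{-2}\alpha_{\pi_j}\pi_j(P_{\pi_j})\xi \neq 0$ whenever $\xi \in H_{\pi_j} \setminus E_{\pi_j}$; your alternative verification --- left-multiplying by the block identities to recover each $P_{\pi_n}$ and invoking density of $I \cap T(G)$ via Lemma \ref{2.1.1} --- would also work, but only once the idempotent fact is in place.

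Two secondary points. In (a)~$\Rightarrow$~(b) you justify countability of $\{\pi : \pi(h_i) \neq 0\}$ by $\sum_\pi d_\pi \Vert \pi(h_i)\Vert_{\mathrm{HS}}^2 < \infty$, but that Parseval identity holds for $L^{\,2}$, not $L^{\,1}$. The statement is nonetheless true (a Riemann--Lebesgue-type theorem: the Fourier transform of an $L^{\,1}$ function lies in the $c_0$-direct sum of the blocks, see \cite{HR2}), and the paper sidesteps the issue entirely by approximating each generator by trigonometric polynomials $t_n^{(i)} \in T(G)$, each of which has \emph{finite} Fourier support by Theorem \ref{2.1.2b}(i), so the relevant set $\mathcal{S}$ is countable for free; your contradiction at a $\pi$ outside $\mathcal{S}$ then runs exactly as the paper's (which phrases it by convolving with the identity $u$ of $T_\pi(G)$ and deducing $u * u = u \neq 0$ fails). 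Finally, your route (b)~$\Rightarrow$~(c) via separability of $L^{\,1}(G)$ differs from the paper's, which argues directly that countability of $\widehat{G}$ makes $G$ first-countable using Tannaka--Krein duality; the classical fact you invoke (for compact $G$, $L^{\,1}(G)$ separable if and only if $G$ is metrisable) is correct but is itself a result of comparable depth to what is being proved, so it needs a citation rather than being treated as immediate.
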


\begin{proof}
We first demonstrate that (b) implies (c). Our method is to show that $G$ is first-countable, which will imply that $G$ is metrisable by \cite[Theorem 8.3]{HR1}. 
Indeed, it follows from Tannaka--Krein duality \cite{JS} that the topology on $G$ is the initial topology induced by its irreducible continuous unitary representations, and as such has a base given by sets of the  form
$$U(\pi_1, \ldots, \pi_n;\eps;t):= \{s \in G: \Vert \pi_i(t) - \pi_i(s)\Vert < \eps, i = 1, \ldots, n \},$$
where $\eps>0, t \in G,$ and $ (\pi_1, H_1), \ldots, (\pi_n, H_n) \in  \widehat{G}$.
Hence, if $\widehat{G}$ is countable, for every $t \in G$ the sets $U(\pi_1, \ldots, \pi_n;1/m;t) \ (m \in \N, \pi_1, \ldots, \pi_n \in \widehat{G})$ form a countable neighbourhood base at $t$,
and so $G$ is first-countable.

Now suppose instead that $G$ is metrisable. Then $C(G)$ is separable. Since the infinity norm dominates the $L^{\, 2}$-norm for a compact space, and since $C(G)$ is dense in $L^{\, 2}(G)$, it follows that $L^{\, 2}(G)$ is separable. By \cite[Theorem 27.40]{HR2} 
$$L^{\, 2}(G) \cong \bigoplus_{\pi \in \widehat{G}} H_\pi^{\oplus {\rm dim} H_\pi}, $$
which is clearly separable only if $\widehat{G}$ is countable. Hence (c) implies (b).

Next we show that (b) implies (a). Suppose that $\widehat{G}$ is countable. By Theorem \ref{2.1.2b}(ii) 
$T(G)$ is a dense ideal in $L^{\, 1}(G)$ so that, by Lemma \ref{2.1.1}, $\overline{I \cap T(G)} = I$ for every closed left ideal $I$ in $L^{\, 1}(G)$. 

Fix a closed left ideal $I$ in $L^{\, 1}(G)$. By Theorem \ref{2.1.2a} there exist linear subspaces 
$E_\pi \subset H_\pi \ (\pi \in \widehat{G})$ such that 
$$I = \left\{ f \in L^{\, 1}(G) : \pi(f)(E_\pi) = 0, \ \pi \in \widehat{G} \right\}.$$
By Theorem \ref{2.1.2b}(iii), for each $\pi \in \widehat{G}$ we have $T_\pi(G) \cong M_{d_\pi}(\C)$, 
where $d_\pi$ is the dimension of $H_\pi$, and since $I \cap T_\pi(G)$ is a left ideal in $T_\pi(G)$ 
there must be an idempotent $P_\pi \in T_\pi(G)$ such that $I \cap T_\pi(G) = T_\pi(G) * P_\pi$. 
Set $\alpha_\pi = \Vert P_\pi \Vert^{-1}$ if $P_\pi \neq 0$, and set $\alpha_\pi = 0$ otherwise. Enumerate $\widehat{G} = \{ \pi_1, \pi_2, \ldots \}$, and define
$$g = \sum_{i=1}^\infty \frac{1}{i^2}\alpha_{\pi_i} P_{\pi_i} \in L^{\, 1}(G),$$
which belongs to $I$ because each $P_{\pi_i}$ does, and $I$ is closed.

We claim that $I = \overline{L^{\, 1}(G)*g}$. Indeed, $I \supset \overline{L^{\, 1}(G)*g}$ because $g \in I$. For the reverse inclusion we show that, for $j \in \N$ and $\xi \in H_{\pi_j}$, we have $\pi_j(f)(\xi) = 0$ for all $f \in \overline{L^{\, 1}(G)*g}$  
if and only if $\xi \in E_{\pi_j}$. The claim then follows from Theorem \ref{2.1.2a}. 
Indeed, if $f \in \overline{L^{\, 1}(G)*g}$ then $\pi_j(f)(\xi) = 0$ because $f \in I$. On the other hand if $\xi \in H_{\pi_j} \setminus E_{\pi_j}$
 then $\pi_j(P_{\pi_j})(\xi) \neq 0$, whereas $\pi_i(P_{\pi_j}) = 0$ for $i \neq j$ by Theorem \ref{2.1.2b}(i), which implies that $\pi_j(g)\xi = \frac{1}{j^2} \alpha_{\pi_j} \pi_j(P_{\pi_j})(\xi) \neq 0$.
This establishes the claim.

Finally we show that (a) implies (b). Assume that $L^{\, 1}(G)$ is topologically left Noetherian. Then there exist $r \in \N$ and $g_1, \ldots, g_r \in L^{\, 1}(G)$ such that 
$$L^{\, 1}(G) = \overline{L^{\, 1}(G)*g_1+ \cdots +L^{\, 1}(G)*g_r}.$$
For each $n \in \N$ there exist $t_n^{(i)} \in T(G) \ (i=1, \ldots, r)$ such that 
$$\Vert t_n^{(i)} - g_i \Vert < \frac{1}{n} \quad (i = 1, \ldots, r).$$
Let $\mathcal{S}$ be the set
$$\mathcal{S} = \left\{ \pi \in \widehat{G} : \text{ there exist } i, n \in \N \text{ such that } \pi \left( t_n^{(i)} \right) \neq 0 \right\}.$$
 We see that $\mathcal{S}$ is countable because, by Theorem \ref{2.1.2b}(i), each function 
$t_n^{(i)}$ satisfies $\pi(t_n^{(i)}) \neq 0$ for at most finitely many $\pi \in \widehat{G}$.
We shall show that $\mathcal{S} = \widehat{G}$. 

Assume instead that there exists some $\pi \in \widehat{G} \setminus \mathcal{S}$, and let $u$ be the identity element of $T_\pi(G)$. For $\sigma \in \widehat{G} \setminus \{ \pi \}$ we have $\sigma(u) = 0$, whereas $\pi \left(t_n^{(i)} \right) = 0$ for every $n \in \N$ and every $i =1, \ldots, r$. Hence $\sigma \left(t_n^{(i)}*u \right) = 0 \ (\sigma \in \widehat{G}, n \in \N, i \in \{ 1, \ldots, r \})$, which implies that $t_n^{(i)}*u = 0$ for every $n \in \N$ and $i = 1, \ldots, r$.

By taking the limit as $n$ goes to infinity, this shows that $g_i*u = 0 \ (i=1, \ldots, r)$, and hence that $f*u = 0$ for every $f \in L^{\, 1}(G).$ However, since $u$ was chosen to be an identity $u*u = u \neq 0$. This contradiction implies that $\widehat{G} = \mathcal{S}$, as claimed.
\end{proof}

The next proposition suggests to us that weak*-topological Noetherianity is a more interesting notion for the measure algebra of a locally compact group $G$ than $\Vert \cdot \Vert$-topological Noetherianity, and we explore this in the next section. Given a discrete group $G$ we write $\ell^{\, 1}(G)= L^{\, 1}(G)$ , and write $\ell^{\, 1}_{\, 0}(G)$ for its augmentation ideal, i.e. the maximal ideal consisting of those $f \in \ell^{\, 1}(G)$ such that $\sum_{t \in G} f(t) = 0$.

\begin{proposition} \label{2.1.3}
Let $G$ be a locally compact group such that $M(G)$ is topologically left Noetherian. Then $G$ is countable. If, in addition, $G$ is either compact or abelian, then $G$ is finite.
\end{proposition}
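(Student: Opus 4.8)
The plan is to reduce everything to group algebras of discrete groups. The continuous (non-atomic) measures $M_c(G)$ form a closed two-sided ideal of $M(G)$, and the quotient $M(G)/M_c(G)$ is isometrically isomorphic to $\ell^1(G_d)$, the convolution algebra of $G$ equipped with the discrete topology $G_d$. By Lemma \ref{2.1.1a}(i), if $M(G)$ is topologically left Noetherian then so is $\ell^1(G_d)$. Thus the first assertion follows once I show: if $\Gamma$ is a discrete group and $\ell^1(\Gamma)$ is topologically left Noetherian, then $\Gamma$ is countable.

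For this I would test topological left Noetherianity on the augmentation ideal $\ell^1_0(\Gamma)$. Suppose $\ell^1_0(\Gamma) = \overline{\ell^1(\Gamma) g_1 + \cdots + \ell^1(\Gamma) g_n}$ with $g_i \in \ell^1_0(\Gamma)$, and let $\Gamma_0$ be the subgroup generated by $\bigcup_i \supp g_i$; since each $g_i$ has countable support, $\Gamma_0$ is countable. I would then decompose $\ell^1(\Gamma)$ along the left cosets of $\Gamma_0$, identifying each coset canonically with $\ell^1(\Gamma_0)$. Under this identification left translation by $\Gamma$ merely permutes the coset-coordinates, acting as the identity within each coordinate. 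Each $g_i$ is supported in the identity coset, with value $\tilde g_i \in \ell^1_0(\Gamma_0)$, so every coset-coordinate of any element of $\ell^1(\Gamma)g_1 + \cdots + \ell^1(\Gamma) g_n$ lies in the finite-dimensional space $V := \spn\{\tilde g_1, \ldots, \tilde g_n\}$. As $V$ is closed, the same holds after taking norm-closures; but if $\Gamma$ is uncountable there are at least two distinct cosets $c_1 \neq c_2$, and then $\delta_{x_1} - \delta_{x_2}$ (with $x_i \in c_i$) lies in $\ell^1_0(\Gamma)$ yet has a coset-coordinate of nonzero augmentation, hence outside $V$. This contradiction forces $\Gamma$ to be countable.

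For the compact case the work is already done: an infinite compact Hausdorff group is uncountable (it is either discrete, hence finite by compactness, or perfect, hence uncountable by Baire), so once $G$ is countable and compact it is automatically finite. For the abelian case, a countable locally compact Hausdorff group is discrete (being a countable Baire space it has an isolated point, and then homogeneity makes it discrete), so the task reduces to showing that $\ell^1(\Gamma)$ is not topologically Noetherian for any countably infinite discrete abelian $\Gamma$.

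This last point is where I expect the real difficulty to lie, since the cardinality argument above is useless in the countable setting and, as the introduction notes, failure of the ascending chain condition is of no help. The quotient $Q := \ell^1_0(\Gamma)/\overline{\ell^1_0(\Gamma)^2}$ gives one clean sub-case: topological finite generation of $\ell^1_0(\Gamma)$ forces $Q$ to be finite-dimensional, and $Q$ can be shown to be infinite-dimensional whenever $\Gamma$ has infinite torsion-free rank (for instance when $\Gamma$ has $\bigoplus_{\N}\Z$ as a quotient), by pushing forward along coordinate projections onto $\Z$. The remaining, genuinely hard, cases are the finite-rank groups, where $Q$ is too small to detect anything. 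Using Lemma \ref{2.1.1a}(i) and the structure theory of abelian groups I would reduce these to the basic infinite groups $\Z$, the Pr\"ufer groups $\Z(p^\infty)$, and the elementary groups $\bigoplus_{\N}\Z/p\Z$, and exhibit in each an explicit closed ideal that is not topologically finitely generated. For the elementary groups the infinite family of orthogonal idempotents arising from the finite subgroups should yield the obstruction, whereas for $\Z$ (and $\Z(p^\infty)$) one must instead appeal to the finer harmonic analysis of the Wiener-type algebra. Producing these explicit non-finitely-generated ideals is the main obstacle.
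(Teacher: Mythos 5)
Your reduction to $\ell^1(G_d)$ via $M(G)/M_c(G)$, your use of the augmentation ideal, and your compact/Baire argument all match the paper, but the coset-decomposition step contains a false claim. With representatives $t_c$ chosen for the left cosets $c \in \Gamma/\Gamma_0$, left convolution by $\delta_\gamma$ sends the coordinate at $c$ to the coordinate at $\gamma c$ \emph{composed with left translation by} $t_{\gamma c}^{-1}\gamma t_c \in \Gamma_0$; it does not act as the identity within coordinates. Consequently the coset-coordinates of elements of $\ell^1(\Gamma)g_1 + \cdots + \ell^1(\Gamma)g_n$ run through $\overline{\ell^1(\Gamma_0)\tilde g_1 + \cdots + \ell^1(\Gamma_0)\tilde g_n}$, which is in general an infinite-dimensional closed left ideal of $\ell^1(\Gamma_0)$, not the finite-dimensional space $V = \spn\{\tilde g_1, \ldots, \tilde g_n\}$. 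The argument is repairable: since the augmentation on $\ell^1(\Gamma_0)$ is a character and each $\tilde g_i \in \ell^1_0(\Gamma_0)$, every coset-coordinate has augmentation zero, this is a closed condition (the coordinate projections are norm-one and $\ell^1_0(\Gamma_0)$ is closed), and your test element $\delta_{x_1} - \delta_{x_2}$ still yields the contradiction — indeed you need only \emph{two} cosets, i.e. $\Gamma_0 \neq \Gamma$, not uncountability, so you get $\Gamma = \Gamma_0$ directly. But note that the repaired invariant, ``augmentation of the coordinate at the identity coset'', is exactly the functional $\sigma \colon f \mapsto \sum_{x \in \Gamma_0} f(x)$ that the paper uses, with the vanishing computation quoted from \cite[Lemma 3.6]{W1}; so after repair this part coincides with the paper's proof.

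The genuine gap is the abelian case, which you candidly leave open. The paper does not construct the obstructions by hand: it quotes Atzmon's theorem \cite[Theorem 1.1]{A}, which provides, in $\ell^1$ of \emph{any} infinite discrete abelian group, a closed ideal that is not topologically finitely generated, disposing of all countably infinite abelian $G$ at once — precisely the ``Wiener-type'' harmonic analysis you anticipate would be needed for $\Z$ and $\Z(p^\infty)$. Moreover, your proposed structure-theoretic reduction would not work as stated even granting the three basic cases: Lemma \ref{2.1.1a}(i) transfers topological Noetherianity only to \emph{quotients}, i.e. to $\ell^1(\Gamma/N)$, and a countably infinite abelian group need not have $\Z$, $\Z(p^\infty)$, or $\bigoplus_{\N} \Z/p\Z$ (for a single $p$) as a quotient — for instance $\bigoplus_{p \text{ prime}} \Z/p\Z$ has only quotients of the form $\bigoplus_{p \in S} \Z/p\Z$. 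The basic groups arise as \emph{subgroups}, and neither your sketch nor anything in the paper transfers the hypothesis from $\ell^1(\Gamma)$ to the subalgebra $\ell^1(H)$ for $H \leq \Gamma$. So without citing Atzmon's theorem (or reproducing his construction), the assertion that $G$ is finite in the abelian case remains unproven.
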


\begin{proof}
Suppose that $M(G)$ is topologically left Noetherian. Then, by Lemma \ref{2.1.1a} (i), so are its quotients, whence $\ell^{\, 1}(G_d)$ is topologically left Noetherian, where $G_d$ denotes the group $G$ with the discrete topology. It follows that 
$$\ell^{\, 1}_{\, 0}(G_d) = \overline{\ell^{\, 1}(G_d)*g_1 + \cdots + \ell^{\, 1}(G_d)*g_n},$$
 for some $n \in \N$ and some $g_1, \ldots, g_n \in \ell^{\, 1}_{\, 0}(G_d)$.
Let $H$ be the subgroup of $G$ generated by the supports of the functions $g_1, \ldots, g_n$. This is a countable set. 
Define $\sigma \colon \ell^{\, 1}(G_d) \rightarrow \C$ by 
$$\sigma \colon f \mapsto \sum_{x \in H} f(x) \quad (f \in \ell^{\, 1}(G_d)).$$
 Then, by the calculation performed in \cite[Lemma 3.6]{W1}, 
$$\sigma(f) = 0 \quad (f \in \ell^{\, 1}(G_d)*g_1 + \cdots + \ell^{\, 1}(G_d)*g_n),$$
 and hence, since $\sigma$ is clearly bounded, $\sigma(f) = 0$ for every $f \in \ell^{\, 1}_{\, 0}(G_d)$. This forces $G = H$. Hence $G$ is countable.

A countable locally compact group is always discrete, so that if it is also compact it must be finite. If $G$ is abelian, then the fact that $\ell^{\, 1}(G_d)$ is topologically Noetherian implies that $G$ is finite by \cite[Theorem 1.1]{A}.
\end{proof}

\section[Ideals of Approximable Operators]{Left and Right Ideals of Approximable Operators on a Banach Space}	\label{S3.5}
\noindent
In this section we shall prove Theorem \ref{1.3}. This will follow as a corollary of the formally more general Theorems \ref{2.3.8} and \ref{2.3.12} below.  Along the way we shall give a characterisation of
the closed right ideals of $\A(E)$, for $E$ any Banach space such that $\A(E)$ has as right approximate identity. This is analogous to the characterisation given by Gr{\o}nb{\ae}k in
\cite[Proposition 7.3]{G} of the closed left ideals of $\K(E)$, for a Banach space $E$ with the approximation property. We shall observe below that Gr{\o}nb{\ae}k's proof actually goes through,
with $\A(E)$ in place of $\K(E)$, under the formally  weaker hypothesis that $\A(E)$ has a left approximate identity.

Let $E$ be a Banach space, and $X \subset \B(E)$. Then we write
$$E' \circ X := \{ \lambda \circ T : T \in X \} = \bigcup_{T \in X} \im T'.$$
 Let $A$ be a closed subalgebra of $\B(E)$. Given closed linear subspaces $F \subset E'$ and $D \subset E$ we define
\begin{equation}	\label{eq2.4.0a}
\mathscr{L}_A(F) = \{T \in A: \im T' \subset F \}
\end{equation}
and
\begin{equation}	\label{eq2.4.0b}
\mathscr{R}_A(D) = \{ T \in A : \im T \subset D \}.
\end{equation}
These define families of closed left and right ideals respectively. We also define a family of closed left ideals by
\begin{equation}	\label{eq2.4.0c}
\mathscr{I}_A(D) = \{ T \in A : \ker T \supset D \},
\end{equation}
where $D$ is a closed linear subspace of $E$.
 When the ambient algebra $A$ is unambiguous we shall often drop the subscript and simply write $\mathscr{L}(F)$, $\mathscr{R}(D)$, and $\mathscr{I}(D)$. Usually $A$ will be either $\A(E)$ or $\B(E)$.
We shall show that when $\A(E)$ has a right approximate identity every closed right ideal of $\A(E)$ has the form $\mathscr{R}(D)$, for some closed linear subspace $D$ of $E$ (Theorem \ref{2.3.9}). We can restate Gr{\o}nb{\ae}k's result in a similar fashion:
%Similarly, by Gr{\o}nb{\ae}k's proof \cite[Proposition 7.3]{G}, when $\A(E)$ has a left approximate identity every closed left ideal of $\A(E)$ has the form $\mathscr{L}(F)$, for some closed subspace $F$ of $E'$.

\begin{theorem} 	\label{2.3.4}
Let $E$ be a Banach space such that $\A(E)$ has a left approximate identity. Then the map
$$ (\SUB(E'), \subset) \rightarrow (\LID(\A(E)), \subset),
 \quad F \mapsto \mathscr{L}(F)$$
 is a lattice isomorphism, with inverse given by 
$$ I \mapsto E'\circ I, \quad (I \in \LID(\A(E))).$$
\end{theorem}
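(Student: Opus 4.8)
The plan is to show that the two maps $F \mapsto \mathscr{L}(F)$ and $I \mapsto E' \circ I$ are mutually inverse and inclusion-preserving; since an order isomorphism between lattices is automatically a lattice isomorphism, this is all that is needed. The engine driving the whole argument will be a single factorisation: if $T \in \A(E)$ and $\lambda = \mu \circ T \in \im T'$ for some $\mu \in E'$, then $y \otimes \lambda = (y \otimes \mu)\, T$ for every $y \in E$ (both sides send $z$ to $\mu(Tz)\,y$). Consequently, whenever $T$ lies in a closed left ideal $I$, so does $y \otimes \lambda$ for every $y \in E$, since $I$ absorbs the left factor $y \otimes \mu$.

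First I would record that each $\mathscr{L}(F)$ is a closed left ideal (from $\im(ST)' = \im T' S' \subset \im T'$ and the continuity of $T \mapsto T'$), so that the first map lands in $\LID(\A(E))$; this is already asserted before the statement. Next I would check that $E' \circ I$ is a closed linear subspace of $E'$ for every $I \in \LID(\A(E))$, so that the second map lands in $\SUB(E')$. Both stability properties are immediate from the factorisation: given $\lambda_1, \lambda_2 \in E' \circ I$ and any $y \neq 0$, the operators $y \otimes \lambda_1$ and $y \otimes \lambda_2$ lie in $I$, hence so does $y \otimes (\lambda_1 + \lambda_2)$, which exhibits $\lambda_1 + \lambda_2 \in \im(y \otimes (\lambda_1 + \lambda_2))' \subset E' \circ I$; closedness follows the same way, using $\Vert y \otimes (\lambda_n - \lambda) \Vert = \Vert y \Vert \, \Vert \lambda_n - \lambda \Vert$ together with the fact that $I$ is closed.

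Then I would verify the two compositions. For $E' \circ \mathscr{L}(F) = F$, the inclusion $\subset$ is immediate from the definition of $\mathscr{L}(F)$, while for $\lambda \in F$ and any $x \neq 0$ the rank-one operator $x \otimes \lambda$ satisfies $\im(x \otimes \lambda)' = \spn\{\lambda\} \subset F$, so $x \otimes \lambda \in \mathscr{L}(F)$ and $\lambda \in E' \circ \mathscr{L}(F)$. For $\mathscr{L}(E' \circ I) = I$, the inclusion $I \subset \mathscr{L}(E' \circ I)$ is immediate, and the reverse inclusion is where the left approximate identity enters. Since $\mathcal{F}(E)$ is a dense right ideal of $\A(E)$ and $\mathscr{L}(E' \circ I)$ is a closed left ideal, Lemma \ref{2.1.1} shows that $\mathcal{F}(E) \cap \mathscr{L}(E' \circ I)$ is dense in $\mathscr{L}(E' \circ I)$, so it suffices to place every finite-rank $S \in \mathscr{L}(E' \circ I)$ inside $I$. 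Writing such an $S$ as $\sum_{i=1}^n x_i \otimes \lambda_i$ with the $x_i$ linearly independent gives $\im S' = \spn\{\lambda_1, \ldots, \lambda_n\} \subset E' \circ I$, whence each $\lambda_i \in E' \circ I$, so $x_i \otimes \lambda_i \in I$ by the factorisation and therefore $S \in I$.

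Monotonicity of both maps being obvious, the resulting bijection preserves inclusions in both directions and is thus a lattice isomorphism. The main obstacle is really just isolating the factorisation $y \otimes (\mu \circ T) = (y \otimes \mu)\, T$ and recognising that this is exactly what converts the one-sided (left) ideal hypothesis into the symmetric-looking condition $\im T' \subset F$; once it is in hand, the left approximate identity is used only for the single density reduction furnished by Lemma \ref{2.1.1}, and everything else is routine bookkeeping with finite-rank operators and their adjoints.
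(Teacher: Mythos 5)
Your proposal is correct, and every step checks out: the factorisation $y \otimes (\mu \circ T) = (y \otimes \mu)\,T$ does all the work you claim, your verification that $E' \circ I$ is automatically a closed linear subspace is valid, and the left approximate identity enters only once, exactly where you place it, via Lemma \ref{2.1.1} applied to the dense right ideal $\mathcal{F}(E)$. The difference from the paper is one of packaging rather than substance. The paper does not write out a standalone proof: it cites Gr{\o}nb{\ae}k's classification \cite[Proposition 7.3]{G} of the closed left ideals of $\K(E)$ for $E$ with AP, observes that Lemma \ref{2.1.1} supplies the density of finite-rank operators in closed left ideals needed for that proof to go through under the weaker hypothesis that $\A(E)$ has a left approximate identity, and then verifies only the one non-routine identification of Gr{\o}nb{\ae}k's maps with $F \mapsto \mathscr{L}(F)$ and $I \mapsto E' \circ I$ --- and at that point it uses precisely your device: write a finite-rank $T = \sum_{i=1}^n x_i \otimes \lambda_i$ with $x_1, \ldots, x_n$ linearly independent, choose $\eta_j \in E'$ with $\langle \eta_j, x_i \rangle = \delta_{ij}$, and conclude $\lambda_i = T'(\eta_i)$. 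Note that your step ``$\im S' = \spn\{\lambda_1, \ldots, \lambda_n\}$'' silently relies on the same dual-basis choice (only the inclusion $\subset$ is automatic), so it merits the one line the paper gives it. What your route buys is independence from \cite{G}: you make explicit two facts the paper leaves buried in the citation, namely that $E' \circ I$, a priori just a union of images, is a closed subspace of $E'$, and that a bijection monotone in both directions between these complete lattices is automatically a lattice isomorphism. What the paper's version buys is brevity and an explicit link to Gr{\o}nb{\ae}k's Morita-equivalence framework, at the cost of being unreadable without \cite{G} in hand.
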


\begin{proof}
Observe that, by Lemma \ref{2.1.1}, every closed left ideal of $\A(E)$ intersects densely
 with the finite rank operators. It follows from this that Gr{\o}nb{\ae}k's proof
  \cite[Proposition 7.3]{G} goes through under our hypothesis on $E$. We claim that, 
  in Gr{\o}nb{\ae}k's notation, $\Phi(F) = \mathscr{L}(F)$ and $\Psi(I) = E' \circ I$:
   showing each inclusion is routine, except $\Phi(F) \supset \mathscr{L}(F)$. For this,
    we again use the fact that $\mathcal{F}(E) \cap \mathscr{L}(F)$ is dense in $\mathscr{L}(F)$ to see that it is sufficient to check that, for a finite rank operator 
 $T = \sum_{i=1}^n x_i \otimes \lambda_i \in \mathscr{L}(F)$, we have $T' \in
 \overline{\spn}\{x \otimes \lambda : x \in E, \lambda \in F \}.$ Indeed, we may
  assume that $x_1, \ldots, x_n$ are linearly independent, and choose 
  $ \eta_j \in E' \ (j = 1, \ldots, n)$  such that 
  $\langle \eta_j, x_i \rangle = \delta_{ij} \ (i, j = 1, \ldots, n)$. 
  It then follows that $T'(\eta_i) = \lambda_i \in F \ (i= 1, \ldots, n)$, so that $T$ has the required form.
\end{proof}

We begin by addressing the topological left Noetherianity question for $\A(E)$.

\begin{lemma}	\label{2.3.5}
Let $E$ be a Banach space. Let $n \in \N$, let $T_1, \ldots, T_n \in \A(E)$, and let $I = \overline{\A(E)^\sharp T_1 + \cdots + \A(E)^\sharp T_n}$. Then 
$$E' \circ I = \overline{\im T_1' + \cdots + \im T_n'}.$$
\end{lemma}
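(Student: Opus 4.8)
The plan is to prove the set equality $E' \circ I = \overline{\im T_1' + \cdots + \im T_n'}$ by establishing the two inclusions separately, recalling that $E' \circ I = \bigcup_{T \in I} \im T'$. For the inclusion ``$\subset$'', I would first note that any element $U$ of the \emph{algebraic} ideal $\A(E)^\sharp T_1 + \cdots + \A(E)^\sharp T_n$ has the form $U = \sum_{i=1}^n V_i T_i$ with $V_i \in \A(E)^\sharp$, so that $U' = \sum_{i=1}^n T_i' V_i'$ and hence $\im U' \subset \im T_1' + \cdots + \im T_n'$. For a general $T \in I$ I would choose operators $U_k$ from the algebraic ideal with $U_k \to T$ in norm; since taking adjoints is an isometry, $U_k' \to T'$ in norm, so for each $\lambda \in E'$ we get $T'(\lambda) = \lim_k U_k'(\lambda)$, a norm-limit of elements of $\im T_1' + \cdots + \im T_n'$. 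This yields $T'(\lambda) \in \overline{\im T_1' + \cdots + \im T_n'}$ and hence the first inclusion.

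For the reverse inclusion ``$\supset$'', the key device is a rank-one operator trick, resting on the two elementary identities $(x \otimes \lambda) S = x \otimes S'(\lambda)$ and $(x \otimes \zeta)'(\mu) = \langle x, \mu \rangle \zeta$, valid for $x \in E$, $\lambda, \zeta, \mu \in E'$ and $S \in \B(E)$. Given $\zeta \in \overline{\im T_1' + \cdots + \im T_n'}$, I would pick functionals $\lambda_i^{(k)} \in E'$ with $\xi_k := \sum_{i=1}^n T_i'(\lambda_i^{(k)}) \to \zeta$, fix $x \in E$ and $\mu \in E'$ with $\langle x, \mu \rangle = 1$, and form $U_k = \sum_{i=1}^n (x \otimes \lambda_i^{(k)}) T_i$. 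Using the first identity, $U_k = x \otimes \xi_k$, which lies in the algebraic ideal (each $x \otimes \lambda_i^{(k)} \in \mathcal{F}(E) \subset \A(E)$) and converges in norm to $x \otimes \zeta$ because $\Vert x \otimes (\xi_k - \zeta) \Vert = \Vert x \Vert \, \Vert \xi_k - \zeta \Vert \to 0$. Hence $x \otimes \zeta \in I$, and applying its adjoint to $\mu$ gives $(x \otimes \zeta)'(\mu) = \langle x, \mu \rangle \zeta = \zeta$, so that $\zeta \in \im (x \otimes \zeta)' \subset E' \circ I$.

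The step I expect to be the main obstacle is exactly this reverse inclusion: a genuine limit point $\zeta$ of $\im T_1' + \cdots + \im T_n'$ need not lie in any single $\im T_i'$, so there is no operator in the \emph{algebraic} ideal whose adjoint hits $\zeta$. The resolution is that, even though $\zeta$ escapes every algebraic image, the rank-one operator $x \otimes \zeta$ is still a norm-limit of operators from the algebraic ideal and therefore belongs to the \emph{closed} ideal $I$, while its adjoint recovers $\zeta$ in one stroke. Everything else — the isometry property of the adjoint map, the inclusion $\mathcal{F}(E) \subset \A(E)$, and the two tensor identities — is routine, and notably no approximate identity on $\A(E)$ is needed, consistent with the lemma being stated without such a hypothesis.
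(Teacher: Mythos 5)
Your proof is correct, and it diverges from the paper's in an instructive way on one of the two inclusions. The inclusion $E' \circ I \subset \overline{\im T_1' + \cdots + \im T_n'}$ is exactly the paper's argument: write an element of $I$ as a norm limit of sums $\sum_{i} R_i \circ T_i$ with $R_i \in \A(E) + \C\,\id_E$, and use $(R_i \circ T_i)' = T_i' \circ R_i'$ together with the continuity of the adjoint to pass to the limit. For the reverse inclusion, however, the paper is laconic: it observes that $E' \circ I$ contains each $\im T_i'$ and then invokes the assertion that $E' \circ I$ is a \emph{closed linear subspace} of $E'$, so that it contains the closed span. That assertion is the content of the surrounding machinery (Theorem \ref{2.3.4}, Gr{\o}nb{\ae}k's classification), but that theorem carries a left approximate identity hypothesis on $\A(E)$ which Lemma \ref{2.3.5} does not, so as written the paper leaves a small justification implicit. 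Your rank-one factorisation $\sum_{i} (x \otimes \lambda_i^{(k)}) T_i = x \otimes \bigl( \sum_i T_i'(\lambda_i^{(k)}) \bigr)$, followed by taking the norm limit inside the closed ideal $I$ and applying the adjoint of $x \otimes \zeta$ to a functional $\mu$ with $\langle x, \mu \rangle = 1$, supplies a self-contained proof of precisely that missing containment — in effect it \emph{is} a proof that $E' \circ I$ absorbs limits of sums from the $\im T_i'$, valid for any Banach space $E$ and any closed left ideal, with no approximate identity anywhere (it is the same device the paper itself uses later, in the proof of Theorem \ref{2.3.9}). What the paper's route buys is brevity, by delegating to the ideal-theoretic framework; what yours buys is independence from that framework and full generality matching the lemma's hypotheses. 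The only pedantic gap is the degenerate case $E = \{0\}$, where no $\mu$ with $\langle x, \mu \rangle = 1$ exists — but there both sides of the claimed equality are trivially $\{0\}$.
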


\begin{proof}
As $E' \circ I = \bigcup_{T \in I} \im T'$ we have $E' \circ I \supset \im T_i' \ (i=1, \ldots n)$. Since $E' \circ I$ is a closed linear subspace, it follows 
that $E' \circ I \supset \overline{\im T_1' + \cdots + \im T_n'}$.

For the reverse inclusion, let $S \in I$ and let $\lambda \in E'$. There are sequences 
$$(R_1^{(j)})_j, \ldots, (R_n^{(j)})_j \subset \mathcal{A}(E) + \C \id_E$$
 such that
$$S = \lim_{j \rightarrow \infty} \left( R_1^{(j)} \circ T_1+ \cdots +R_n^{(j)} \circ T_n \right).$$
Then 
\begin{align*}
\lambda \circ S &= \lim_{j \rightarrow \infty} \left( \lambda \circ (R_1^{(j)} \circ T_1) + \cdots + \lambda \circ (R_n^{(j)} \circ T_n ) \right) \\
&= \lim_{j \rightarrow \infty} \left( T_1'(\lambda \circ R_1^{(j)}) + \cdots + T_n'(\lambda \circ R_n^{(j)}) \right) \in \overline{\im T_1' + \cdots + \im T_n'}.
\end{align*}
As $\lambda$ and $S$ were arbitrary, this concludes the proof.
\end{proof}

The next lemma gives a partial characterisation of when $\A(E)$ is topologically left Noetherian. The full characterisation will be given in Theorem \ref{2.3.8}.

\begin{lemma}	\label{2.3.6}
Let $E$ be a Banach space such that $\A(E)$ has a left approximate identity.
	\begin{enumerate}
		\item[{\rm (i)}] Let $F \subset E$ be a closed linear subspace. Then $\mathscr{L}(F)$ is topologically generated by $T_1, \ldots, T_n \in \A(E)$ if and only if
			\begin{equation}
			F = \overline{\im T_1' + \cdots + \im T_n'}.	\label{eq2.4.1}
			\end{equation}
		\item[{\rm (ii)}] The algebra $\A(E)$ is topologically left Noetherian if and only if every closed linear subspace of $E'$ has the form \eqref{eq2.4.1}, for some $n \in \N$ and $T_1, \ldots, T_n \in \A(E)$.
	\end{enumerate}
\end{lemma}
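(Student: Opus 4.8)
The plan is to deduce both parts directly from the lattice isomorphism of Theorem \ref{2.3.4} together with the computation in Lemma \ref{2.3.5}, so that essentially no new analysis is required: this lemma is a bookkeeping combination of those two results. (I would also silently read $F \subset E'$ rather than $F \subset E$ in the statement, since $\mathscr{L}(F)$ is only defined for closed subspaces of $E'$ via \eqref{eq2.4.0a}.)

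For part (i), I would fix $T_1, \ldots, T_n \in \A(E)$ and set $I = \overline{\A(E)^\sharp T_1 + \cdots + \A(E)^\sharp T_n}$, which is a closed left ideal. Lemma \ref{2.3.5} gives $E' \circ I = \overline{\im T_1' + \cdots + \im T_n'}$. Since $\A(E)$ has a left approximate identity, Theorem \ref{2.3.4} tells us that $F \mapsto \mathscr{L}(F)$ is a bijection with inverse $J \mapsto E' \circ J$; in particular $\mathscr{L}(F) = I$ holds if and only if $F = E' \circ I$. Combining these two facts, $\mathscr{L}(F) = I$ if and only if $F = \overline{\im T_1' + \cdots + \im T_n'}$, which is exactly \eqref{eq2.4.1}. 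The only point needing a separate (trivial) check is that the definition of ``topologically generated by $T_1, \ldots, T_n$'' also requires each $T_i$ to lie in $\mathscr{L}(F)$; but when \eqref{eq2.4.1} holds we have $\im T_i' \subset F$, so indeed $T_i \in \mathscr{L}(F)$ by the definition \eqref{eq2.4.0a}.

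For part (ii), I would invoke Theorem \ref{2.3.4} once more to note that the closed left ideals of $\A(E)$ are precisely the subspaces $\mathscr{L}(F)$ as $F$ ranges over $\SUB(E')$, and that distinct $F$ yield distinct ideals. Hence $\A(E)$ is topologically left Noetherian precisely when, for every $F \in \SUB(E')$, the ideal $\mathscr{L}(F)$ is topologically finitely-generated, i.e. topologically generated by some finite collection $T_1, \ldots, T_n \in \A(E)$. By part (i) this happens if and only if $F$ has the form \eqref{eq2.4.1}, and quantifying over all $F \in \SUB(E')$ yields the claimed equivalence.

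There is no real obstacle here, as all the substantive work has already been carried out in Theorem \ref{2.3.4} and Lemma \ref{2.3.5}. The only mild subtleties to keep straight are the direction of the bijection (applying the inverse $E' \circ (\cdot)$ to pass from an identity between ideals to an identity between subspaces of $E'$) and the elementary membership check $T_i \in \mathscr{L}(F)$ needed to match the formal definition of topological generation.
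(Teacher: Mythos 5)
Your proposal is correct and takes essentially the same route as the paper's own proof: both directions of (i) are obtained by combining the computation $E' \circ I = \overline{\im T_1' + \cdots + \im T_n'}$ from Lemma \ref{2.3.5} with the lattice isomorphism of Theorem \ref{2.3.4}, and (ii) follows because every closed left ideal of $\A(E)$ is of the form $\mathscr{L}(F)$. Your silent correction of the misprint $F \subset E$ to $F \subset E'$, and your observation that the membership requirement $T_i \in \mathscr{L}(F)$ is automatically satisfied (indeed $T_i \in \A(E)^\sharp T_i$), are both sound and do not change the argument.
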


\begin{proof}
(i)	Suppose that $\mathscr{L}(F) = \overline{\A(E)T_1 + \cdots + \A(E)T_n}$, for some $T_1, \ldots, T_n \in \A(E)$. Then by Lemma \ref{2.3.5} 
$$E' \circ \mathscr{L}(F) = \overline{\im T_1' + \cdots + \im T_n'},$$
so that, by Theorem \ref{2.3.4}, $F = \overline{\im T_1' + \cdots + \im T_n'}.$

Conversely, suppose that there are maps $T_1, \ldots, T_n \in \A(E)$ such that  $F$ has the form \eqref{eq2.4.1}. Consider the left ideal 
$$I = \overline{\A(E)T_1 + \cdots + \A(E)T_n}.$$
 By Lemma \ref{2.3.5} we have $E' \circ I = F,$ and so by Theorem \ref{2.3.4} we have $I = \mathscr{L}(E' \circ I) = \mathscr{L}(F)$. Hence 
$$\mathscr{L}(F) = \overline{\A(E)T_1 + \cdots + \A(E)T_n},$$
as required.

(ii) This is clear from (i) and Theorem \ref{2.3.4}.
\end{proof}

In the proof of the next lemma we use the fact that every infinite-dimensional Banach space contains a basic sequence \cite[Theorem 4.1.30]{Meg}.

\begin{lemma}	\label{2.3.7}
Let $E$ be a Banach space, and let $F \subset E'$ be a closed, separable linear subspace. Then there exists $T \in \A(E)$ such that $\overline{\im T'} = F$.
\end{lemma}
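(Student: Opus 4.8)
The plan is to construct a single approximable operator $T$ as a norm-convergent series of rank-one operators, exploiting the formula for the adjoint of a rank-one operator: for $x \in E$, $\lambda \in E'$ and $\mu \in E'$ one has $(x \otimes \lambda)'\mu = \langle x, \mu \rangle \lambda$. Consequently the image of the adjoint of any finite rank operator $\sum_i x_i \otimes \lambda_i$ lies in $\spn\{\lambda_i\}$, which will give control of $\im T'$ from above. The idea is to feed a sequence spanning $F$ into the ``$\lambda$-slots'' of the summands, while choosing the ``$x$-slots'' cleverly enough that each $\lambda_n$ can also be recovered \emph{from} $\im T'$.

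Concretely, since $F$ is separable I would first fix a sequence $(\lambda_n)$ lying in, and dense in, the closed unit ball of $F$; its closed linear span is then all of $F$. If $F$ is finite-dimensional the result is elementary (take $T = \sum_{i=1}^{\dim F} a_i \otimes \lambda_i$ for a basis $(\lambda_i)$ of $F$ and linearly independent $a_i \in E$, which exist as $F \subset E'$ forces $\dim E \geq \dim F$), so I would assume $F$, and hence $E'$ and $E$, infinite-dimensional. Invoking the quoted fact that every infinite-dimensional Banach space contains a basic sequence, I choose a normalised basic sequence $(a_n)$ in $E$; being basic, it carries coordinate functionals on $\overline{\spn}\{a_n\}$, which extend by Hahn--Banach to $\mu_n \in E'$ with $\langle a_m, \mu_n \rangle = \delta_{mn}$. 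I then set
$$T = \sum_{n=1}^\infty 2^{-n}\, a_n \otimes \lambda_n.$$
Since $\Vert 2^{-n}\, a_n \otimes \lambda_n \Vert \leq 2^{-n}$, this series converges in operator norm and each partial sum is finite rank, so $T \in \A(E)$.

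It then remains to verify $\overline{\im T'} = F$. As the adjoint map is an isometry, $T' = \sum_n 2^{-n}(a_n \otimes \lambda_n)'$, so $T'\mu = \sum_n 2^{-n} \langle a_n, \mu \rangle \lambda_n$ for every $\mu \in E'$. For the inclusion $\subseteq$ this shows $T'\mu \in F$, because each partial sum lies in $\spn\{\lambda_n\} \subseteq F$ and $F$ is closed. For the reverse inclusion I would use biorthogonality: evaluating at $\mu_m$ collapses the series to $T'\mu_m = 2^{-m}\lambda_m$, whence $\lambda_m = 2^m\, T'\mu_m \in \im T'$ for every $m$. Therefore $\overline{\im T'} \supseteq \overline{\spn}\{\lambda_m\} = F$, giving equality.

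The only genuine obstacle is this reverse inclusion, namely arranging that each $\lambda_m$ can be read off from $\im T'$ despite the interference of the other terms of the series; this is precisely what the biorthogonal system furnished by a basic sequence achieves, and is the reason the basic-sequence fact is invoked. The remaining points are routine sanity checks: that passing to a dense sequence in the unit ball of $F$ does not destroy density of its span (a dense subset of the unit ball has closed span equal to $F$), and that both the series defining $T$ and its adjoint series converge in norm, which is immediate from the geometric bound.
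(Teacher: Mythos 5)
Your proof is correct and is essentially identical to the paper's: the paper likewise takes a dense sequence $(\lambda_n)$ in the unit ball of $F$, a normalised basic sequence $(b_n)$ in $E$ with biorthogonal functionals $(\beta_n) \subset E'$, defines $T = \sum_{n=1}^\infty 2^{-n} b_n \otimes \lambda_n$, and recovers $\lambda_i = T'(2^i \beta_i)$ to get $\overline{\im T'} = F$. Your separate treatment of the finite-dimensional case and the norm-convergence checks are exactly the routine details the paper dismisses in its opening sentence.
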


\begin{proof}
We may suppose that $E$ is infinite-dimensional, since otherwise the lemma follows from routine linear algebra. Let $\{ \lambda_n : n \in \N \}$ be a dense subset of the unit ball of $F$, and let $(b_n)$ be a normalised basic sequence in $E$. Let $(\beta_n) \subset E'$ satisfy $\langle b_i, \beta_j \rangle = \delta_{ij} \ (i, j \in \N)$. Define $T = \sum_{n=1}^\infty 2^{-n} b_n \otimes \lambda_n$. The operator $T$ is a limit of finite-rank operators and 
$$T'\varphi = \sum_{n=1}^\infty 2^{-n} \varphi(b_n) \lambda_n \quad (\varphi \in E').$$
 Certainly $\overline{\im T'} \subset F$. Observing that $T'(2^i \beta_i) = \lambda_i \ (i \in \N)$, we see that $\overline{\im T'} = F$, as required.
\end{proof}

We can now give our characterisation of topological left Noetherianity for $\A(E)$. We notice that our proof actually implies that for these Banach algebras topological left Noetherianity is equivalent
to every closed left ideal being topologically principal.

\begin{theorem}	\label{2.3.8}
Let $E$ be a Banach space such that $\A(E)$ has a left approximate identity. Then the following are equivalent:
	\begin{enumerate}
		\item[{\rm (a)}] the Banach algebra $\A(E)$ is topologically left Noetherian;
		\item[{\rm (b)}] every closed left ideal of $\A(E)$ is topologically principal;
		\item[{\rm (c)}] the space $E'$ is separable.
	\end{enumerate}
\end{theorem}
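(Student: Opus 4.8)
The plan is to prove the cyclic chain of implications (c) $\Rightarrow$ (b) $\Rightarrow$ (a) $\Rightarrow$ (c), leaning throughout on the dictionary between closed left ideals of $\A(E)$ and closed subspaces of $E'$ established in Theorem \ref{2.3.4}, together with the characterisations in Lemmas \ref{2.3.6} and \ref{2.3.7}. The hypothesis that $\A(E)$ has a left approximate identity is exactly what makes all three of these tools available.

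For (c) $\Rightarrow$ (b), I would suppose that $E'$ is separable. By Theorem \ref{2.3.4} every closed left ideal of $\A(E)$ is of the form $\mathscr{L}(F)$ for a (unique) closed subspace $F \subset E'$. Since $E'$ is separable, so is each such $F$, and hence Lemma \ref{2.3.7} supplies a single operator $T \in \A(E)$ with $\overline{\im T'} = F$. Lemma \ref{2.3.6}(i) then yields $\mathscr{L}(F) = \overline{\A(E)^\sharp T}$, so $\mathscr{L}(F)$ is topologically principal. As every closed left ideal arises in this way, (b) follows. The implication (b) $\Rightarrow$ (a) is immediate, since a topologically principal ideal is in particular topologically generated by $n = 1$ element.

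For (a) $\Rightarrow$ (c), I would apply Lemma \ref{2.3.6}(ii) to the closed subspace $F = E'$ itself: topological left Noetherianity furnishes $T_1, \ldots, T_n \in \A(E)$ with $E' = \overline{\im T_1' + \cdots + \im T_n'}$. The one remaining point is that $\overline{\im T_i'}$ is separable for each $i$. Indeed, since $T_i \in \A(E)$ it is a norm limit of finite rank operators $S_k$, whence $T_i'$ is the norm limit of the finite-rank operators $S_k'$; consequently $\im T_i' \subset \overline{\spn}\, \bigcup_k \im S_k'$, and the latter is separable, being the closed span of a countable union of finite-dimensional spaces. A finite sum of separable subspaces is again separable, and since this sum is dense in $E'$ we conclude that $E'$ is separable.

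I do not anticipate a serious obstacle here: once Theorem \ref{2.3.4} and Lemmas \ref{2.3.6}--\ref{2.3.7} are in hand, the argument is essentially bookkeeping about the correspondence $F \leftrightarrow \mathscr{L}(F)$. The only step needing genuine care is the separability of $\overline{\im T_i'}$ for approximable $T_i$, which rests on the elementary facts that the adjoint of an approximable operator is again approximable and that an approximable operator has separable range.
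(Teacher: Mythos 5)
Your proposal is correct and follows essentially the same route as the paper: the implications (c) $\Rightarrow$ (b) via Theorem \ref{2.3.4}, Lemma \ref{2.3.7}, and Lemma \ref{2.3.6}(i), and (a) $\Rightarrow$ (c) via $E' = \overline{\im T_1' + \cdots + \im T_n'}$ with separable summands. The only (immaterial) difference is at the final step, where the paper deduces separability of $\im T_i'$ from compactness of $T_i'$, while you deduce it directly from approximability of $T_i'$; both arguments are elementary and equally valid.
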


\begin{proof}
It is trivial that (b) implies (a). To see that (c) implies (b), note that, by Theorem \ref{2.3.4}, every closed left ideal of $\A(E)$ has the form $\mathscr{L}(F)$, for some closed linear subspace $F$ in $E'$. Fixing $F \in \SUB(E')$, by Lemma \ref{2.3.7} there exists $T \in \A(E)$ such that $F = \overline{ \im T'}$, which implies that $\mathscr{L}(F) = \overline{\A(E) T}$, by Lemma \ref{2.3.6}(i).

We show that (a) implies (c) to complete the proof. Suppose that $\A(E)$ is topologically left Noetherian. Then in particular 
$\A(E) =\overline{\A(E)T_1+ \cdots + \A(E)T_n}$ for some $T_1, \ldots, T_n \in \A(E)$. Observing that $\mathscr{L}(E') = \A(E)$, Lemma \ref{2.3.5} implies 
that $E' = \overline{\im T_1' + \cdots + \im T_n'}$. Since each operator $T_i$ is compact, so is each $T_i'$, implying that each space $\im T_i'$ is separable. 
It follows that $E' =  \overline{\im T_1' + \cdots + \im T_n'}$ is separable.
\end{proof}

We now give our classification of the closed right ideals of $\A(E)$. Observe that our hypothesis on $\A(E)$ changes from possessing a left approximate identity to possessing a right approximate identity.

\begin{theorem}	\label{2.3.9}
Let $E$ be a Banach space such that $\A(E)$ has a right approximate identity. 
There is a lattice isomorphism $\Xi : (\SUB(E), \subset) \rightarrow (\RID(E), \subset)$ given by
$$\Xi : F \mapsto \mathscr{R}(F),$$
with inverse given by
$$\widehat{\Xi} : I \mapsto \overline{\spn}_{T \in I} ( \im T ) \quad (I \in \RID(\A(E))).$$
\end{theorem}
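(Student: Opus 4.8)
The plan is to show that $\Xi$ and $\widehat\Xi$ are mutually inverse, order-preserving bijections; since an order isomorphism between lattices automatically preserves meets and joins, this yields the asserted lattice isomorphism. First I would dispose of the routine well-definedness. For a closed subspace $F \subset E$ the set $\mathscr{R}(F) = \{T \in \A(E) : \im T \subset F\}$ is a right ideal because $\im(T \circ S) = T(S(E)) \subset \im T \subset F$ for $S \in \A(E)^\sharp$, and it is closed since $\im T \subset F$ is preserved under pointwise limits with $F$ closed; meanwhile $\widehat\Xi(I) = \overline{\spn}_{T \in I}(\im T)$ is visibly a closed subspace. Both maps are order-preserving straight from the definitions. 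The identity $\widehat\Xi(\Xi(F)) = F$ is easy and needs no approximate identity: for $x \in F$ and any nonzero $\lambda \in E'$ the rank-one operator $x \otimes \lambda$ lies in $\mathscr{R}(F)$ with image $\C x$, so $F \subset \widehat\Xi(\Xi(F))$, while the reverse inclusion holds because every $T \in \mathscr{R}(F)$ satisfies $\im T \subset F$.

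The crux is the identity $\Xi(\widehat\Xi(I)) = I$ for a closed right ideal $I$. Writing $D := \widehat\Xi(I)$, the inclusion $I \subset \mathscr{R}(D)$ is immediate, so the work is in proving $\mathscr{R}(D) \subset I$, and I would do this in two stages. The first stage reduces to finite rank: invoking the right-handed analogue of Lemma \ref{2.1.1} (the finite-rank operators $\mathcal{F}(E)$ form a dense two-sided, hence left, ideal, and $\A(E)$ has a right approximate identity, so $\mathcal{F}(E)$ meets every closed right ideal densely), the set $\mathcal{F}(E) \cap \mathscr{R}(D)$ is dense in $\mathscr{R}(D)$. As $I$ is closed it therefore suffices to show that every finite-rank $S \in \mathscr{R}(D)$ lies in $I$. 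The second stage handles such an $S$: choosing a representation $S = \sum_{i=1}^n x_i \otimes \lambda_i$ with $\lambda_1, \dots, \lambda_n$ linearly independent forces $\im S = \spn\{x_1, \dots, x_n\}$, so each $x_i \in D$.

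Since $D$ is the closed span of $\bigcup_{T \in I} \im T$, each $x_i$ is a limit of finite sums $\sum_j T^{(k)}_{ij} v^{(k)}_{ij}$ with $T^{(k)}_{ij} \in I$ and $v^{(k)}_{ij} \in E$. The key elementary computation is the factorisation $T \circ (v \otimes \lambda) = (Tv) \otimes \lambda$, which shows that $\bigl(\sum_j T^{(k)}_{ij} v^{(k)}_{ij}\bigr) \otimes \lambda_i = \sum_j T^{(k)}_{ij} \circ (v^{(k)}_{ij} \otimes \lambda_i)$ lies in $I$, because $v^{(k)}_{ij} \otimes \lambda_i \in \mathcal{F}(E) \subset \A(E)$ and $I$ is a right ideal. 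Summing over $i$ produces operators in $I$ converging to $S$, whence $S \in I$ by closedness. Finally, with $\Xi$ and $\widehat\Xi$ confirmed to be inverse order-preserving bijections, I would conclude that $\Xi$ is a lattice isomorphism.

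The main obstacle is precisely the inclusion $\mathscr{R}(D) \subset I$, and everything hinges on the right approximate identity, which (via Lemma \ref{2.1.1}) supplies density of the finite-rank operators; after that the factorisation $T \circ (v \otimes \lambda) = (Tv) \otimes \lambda$ together with the right-ideal property completes the argument. This is the natural mirror image of Gr{\o}nb{\ae}k's proof of Theorem \ref{2.3.4}, with the roles of $E$ and $E'$, and of left and right, interchanged; in particular one works with ranges $\im T$ rather than with $\im T' = E' \circ T$.
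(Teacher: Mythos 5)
Your proof is correct and follows essentially the same route as the paper's: the same reduction to finite-rank operators via the right-handed version of Lemma \ref{2.1.1}, followed by the factorisation $T \circ (v \otimes \lambda) = (Tv)\otimes\lambda$ and the right-ideal property to absorb rank-one pieces into $I$. The only cosmetic difference is that you take the $\lambda_i$ linearly independent to force $x_i \in \im S \subset \widehat{\Xi}(I)$, whereas the paper simply chooses a representation with $x_i \in \im T$ from the outset; these devices are interchangeable.
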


\begin{proof}
It is clear that $\Xi$ and $\widehat{\Xi}$ are inclusion preserving. Since a poset isomorphism between lattices preserves the lattice structure, once we have shown that $\Xi$ and $\widehat{\Xi}$ are mutually inverse it will follow that they are lattice isomorphisms.

Let $F$ be a closed linear subspace of $E$ and set $D= \widehat{\Xi}(\mathscr{R}(F))$. It is immediate from the definitions that $D \subset F$. Moreover, given $x \in F$, by considering $x \otimes \lambda$ for some $\lambda \in E \setminus \{ 0 \}$ we see that $x \in D$. Hence $F = D$, and, since $F$ was arbitrary, this shows that $\widehat{\Xi} \circ \Xi$ is the identity map. 

Let $I$ be a closed right ideal of $\A(E)$, and set $F = \widehat{\Xi}(I)$. It is clear that $I \subset \mathscr{R}(F)$. By Lemma \ref{2.1.1} the finite-rank operators intersect $\mathscr{R}(F)$ densely, so in order to check the reverse inclusion it is sufficient to show that $\mathcal{F}(E) \cap \mathscr{R}(F) \subset I$. Let $T \in \mathcal{F}(E) \cap \mathscr{R}(F).$ Then we can write $T = \sum_{i=1}^n x_i \otimes \lambda_i$, for some $n \in \N$, some $x_1, \ldots, x_n \in \im T$, and some $\lambda_1, \ldots, \lambda_n \in E'$. Fix $i \in \{1, \ldots, n \}$. Then $x_i \in F$ so there exists a sequence $(y_j) \subset \spn_{U \in I} (\im U)$ such that $\lim_{j \rightarrow \infty} y_j = x_i$. Moreover, for each $j$ we can write $y_j = S_1^{(j)} z_1 + \cdots + S_{k_j}^{(j)}z_{k_j}$, for some $k_j \in \N$, some $S_1^{(j)}, \ldots, S_{k_j}^{(j)} \in I,$ and some $z_1, \ldots, z_{k_j} \in E$. For each $j$, and each $p=1, \ldots, k_j$ we have 
$\left(S_p^{(j)}z_j \right) \otimes \lambda_i = S_p^{(j)} \circ (z_p \otimes \lambda_i)  \in I$. Hence $y_j \otimes \lambda_i \in I$ for each $j$, so that, taking the limit as $j$ goes to infinity, $x_i \otimes \lambda_i \in I$. As $i$ was arbitrary it follows that $T \in I$. Hence we have shown that $I = \mathscr{R}(F)$. As $I$ was arbitrary, we have shown that $\Xi \circ \widehat{\Xi}$ is the identity map. 
\end{proof}

Now we set out to characterise when $\A(E)$ is topologically right Noetherian, for $E$ a Banach space as in Theorem \ref{2.3.9}.

\begin{lemma}	\label{2.3.10}
Let $E$ and $\widehat{\Xi}$ be as in Theorem \ref{2.3.9}. Let $T_1, \ldots, T_n \in \A(E)$ and let $I = \overline{T_1\A(E)+ \cdots +T_n\A(E)}$. Then
$$\widehat{\Xi}(I) = \overline{ \im T_1 + \cdots + \im T_n}.$$
\end{lemma}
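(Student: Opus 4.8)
The plan is to prove the two inclusions defining the equality $\widehat{\Xi}(I) = \overline{\im T_1 + \cdots + \im T_n}$, exploiting the description of $\widehat{\Xi}$ as the closed span of the images of the operators in the right ideal $I$. This is the right-handed analogue of Lemma \ref{2.3.5}, so I expect the structure of the argument to mirror that proof, with the roles of left and right multiplication and of adjoints suitably transposed.

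First I would establish the inclusion $\widehat{\Xi}(I) \supset \overline{\im T_1 + \cdots + \im T_n}$. Since each $T_i$ lies in $I$, we have $\im T_i \subset \spn_{T \in I}(\im T)$, and hence $\im T_i \subset \widehat{\Xi}(I)$ for each $i = 1, \ldots, n$. Because $\widehat{\Xi}(I)$ is by definition a closed linear subspace of $E$, it must contain the closure of the sum $\im T_1 + \cdots + \im T_n$, giving this inclusion directly. This half is immediate and requires no approximation argument.

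For the reverse inclusion $\widehat{\Xi}(I) \subset \overline{\im T_1 + \cdots + \im T_n}$, I would take an arbitrary element of $\spn_{S \in I}(\im S)$ and show it lies in $\overline{\im T_1 + \cdots + \im T_n}$; since the latter is closed, this suffices to handle the closed span. It is enough to treat a single vector of the form $Sx$, where $S \in I$ and $x \in E$. Because $I = \overline{T_1\A(E) + \cdots + T_n\A(E)}$, there are sequences $(R_i^{(j)})_j \subset \A(E)^\sharp$ (or $\A(E)$, using the right approximate identity as the excerpt notes before Lemma \ref{2.1.1}) with
$$S = \lim_{j \rightarrow \infty} \left( T_1 \circ R_1^{(j)} + \cdots + T_n \circ R_n^{(j)} \right).$$
Applying this to $x$ and using continuity of evaluation, $Sx = \lim_{j} \sum_{i=1}^n T_i\big(R_i^{(j)} x\big)$, and each summand $T_i(R_i^{(j)} x)$ lies in $\im T_i$; hence $Sx$ is a limit of elements of $\im T_1 + \cdots + \im T_n$, so $Sx \in \overline{\im T_1 + \cdots + \im T_n}$. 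Taking closed linear spans over all such $S$ and $x$ yields the inclusion.

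I do not anticipate a serious obstacle here: the essential point is simply that right multiplication by $R_i^{(j)}$ keeps images inside $\im T_i$, which is the mirror image of the observation $\lambda \circ (R \circ T) = T'(\lambda \circ R)$ used in Lemma \ref{2.3.5}, now phrased on the level of vectors rather than functionals. The only mild care needed is the standard reduction from a general element of the closed span $\widehat{\Xi}(I)$ to finite linear combinations of vectors $Sx$ with $S \in I$, and then the use of the right approximate identity to justify dropping the unitisation and writing the approximating operators as $T_i \circ R_i^{(j)}$ with $R_i^{(j)} \in \A(E)$, exactly as flagged in the discussion preceding Lemma \ref{2.1.1}.
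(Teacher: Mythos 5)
Your proposal is correct and takes essentially the same route as the paper: both halves rest on the density of $T_1\A(E)+\cdots+T_n\A(E)$ in $I$ together with the observation that composing $T_i$ on the right keeps images inside $\im T_i$. The paper merely organises the reverse inclusion as a direct $\eps$-approximation of an arbitrary element of $\widehat{\Xi}(I)$ by a finite sum $S_1y_1+\cdots+S_my_m$ with each $S_p$ replaced by an element of the dense subspace, rather than your generator-by-generator argument followed by taking closed spans; this is only a cosmetic difference (and your aside about the unitisation is unnecessary, since the lemma already states $I$ with $T_i\A(E)$ unadorned).
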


\begin{proof}
Since each $T_i \ (i = 1, \ldots, n)$ belongs to $I$ we have $\overline{\im T_1 + \cdots + \im T_n} \subset \widehat{\Xi}(I)$. Let $x \in \widehat{\Xi}(I)$, and let $\eps>0$. Then, by the definition of $\widehat{\Xi}$, there exist $m \in \N$, $S_1, \ldots, S_m \in I$, and $y_1, \ldots, y_m \in E$ such that
$$\Vert x - (S_1y_1 + \cdots + S_my_m) \Vert < \eps.$$ 
Since $T_1\A(E) + \cdots + T_n\A(E)$ is dense in $I$, we may in fact suppose that
 $$S_1, \ldots, S_m \in T_1\A(E)+\cdots + T_n\A(E),$$
 so that $S_1y_1 + \cdots + S_my_m \in \im T_1 + \cdots + \im T_n$. 
As $\eps$ was arbitrary we see that $x \in \overline{\im T_1 + \cdots + \im T_n}$. The result now follows.
\end{proof}

We omit the proof of the following well known result. In any case, it can be proved in a similar fashion to Lemma \ref{2.3.7}.

\begin{lemma} \label{2.3.11}
Let $E$ be a Banach space, and let $F$ be any separable Banach space. Then there exists an approximable linear map from $E$ to $F$ with dense range.
\end{lemma}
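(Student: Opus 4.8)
The plan is to transcribe the construction in the proof of Lemma \ref{2.3.7}, interchanging the roles of domain and codomain so that we build the operator $T$ from $E$ to $F$ directly rather than its adjoint. As in Lemma \ref{2.3.7}, we may assume that $E$ is infinite-dimensional, the finite-dimensional case reducing to routine linear algebra.

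First I would use the separability of $F$ to fix a sequence $\{ y_n : n \in \N \}$ that is dense in the closed unit ball of $F$. Since $E$ is infinite-dimensional, by \cite[Theorem 4.1.30]{Meg} it contains a normalised basic sequence $(b_n)$; I would then take coordinate functionals $(\beta_n) \subset E'$ satisfying $\langle b_i, \beta_j \rangle = \delta_{ij} \ (i, j \in \N)$, extended to all of $E$ by Hahn--Banach. A standard fact about basic sequences is that these functionals are uniformly bounded, say $\sup_n \Vert \beta_n \Vert =: C < \infty$. I would then define
$$T = \sum_{n=1}^\infty 2^{-n} y_n \otimes \beta_n,$$
where $y_n \otimes \beta_n$ denotes the rank-one operator $x \mapsto \langle x, \beta_n \rangle y_n$ from $E$ to $F$. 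Since $\Vert 2^{-n} y_n \otimes \beta_n \Vert \le 2^{-n} C$, the series converges in operator norm, so $T$ is a limit of finite-rank operators and is therefore approximable.

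It then remains to check that $T$ has dense range, and this is where the biorthogonality is used: for each $i \in \N$, only the $n = i$ term of the series survives on evaluation at $2^i b_i$, giving $T(2^i b_i) = 2^{-i} \cdot 2^i \, y_i = y_i$. Hence $\{ y_i : i \in \N \} \subset \im T$, and since this set is dense in the unit ball of $F$, the subspace $\im T$ is dense in $F$, as required. I do not expect any genuine obstacle: the only point requiring care is the norm-convergence of the defining series, which is guaranteed by the uniform bound on $\Vert \beta_n \Vert$, and the whole argument is a direct dualisation of Lemma \ref{2.3.7}.
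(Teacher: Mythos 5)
Your proof is correct and is precisely the argument the paper has in mind: the paper in fact omits the proof of Lemma \ref{2.3.11}, remarking only that it ``can be proved in a similar fashion to Lemma \ref{2.3.7}'', and your construction $T = \sum_{n=1}^\infty 2^{-n}\, y_n \otimes \beta_n$ is exactly that dualisation, with the uniform bound $\sup_n \Vert \beta_n \Vert < \infty$ (valid for a normalised basic sequence, with bound $2K$ for $K$ the basis constant, after Hahn--Banach extension) correctly supplying norm-convergence, and biorthogonality giving $T(2^i b_i) = y_i$, hence dense range since $\{y_i\}$ is dense in the unit ball of $F$ and $\overline{\im T}$ is a subspace.

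One small inaccuracy worth noting: when $E$ is finite-dimensional the claim does not reduce to ``routine linear algebra'' --- for infinite-dimensional $F$ it is simply false, since any operator out of a finite-dimensional $E$ has finite-dimensional, hence closed and proper, range. This is an infelicity inherited from the lemma's own statement (the same disclaimer is legitimate in Lemma \ref{2.3.7}, where $F \subset E'$), and it is harmless in the paper's application, where the lemma is invoked only for $F$ a closed subspace of $E$ in Theorem \ref{2.3.12}.
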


%\begin{proof}
%We may suppose that $E$ is infinite-dimensional. Let $(x_n) \subset F$ be dense. Since $E$ is infinite-dimensional, there exists a normalised basic sequence $(b_n) \subset E$. Let $(\beta_n) \subset E'$ be a bounded sequence satisfying $\langle b_i, \beta_j \rangle = \delta_{ij} \ (i, j \in \N)$, which we can obtain by taking the coordinate functionals for $(b_n)$ and extending them using the Hahn--Banach Theorem. Define $T \colon E \rightarrow F$ by $T= \sum_{n=1}^\infty 2^{-n} x_n \otimes \beta_n$. Then $T$ is a limit of finite-rank operators, and $T(2^ib_i) = x_i \ (i \in \N)$ implies that $T$ has dense range.
%\end{proof}

We can now prove the theorem.

\begin{theorem}	\label{2.3.12}
Let $E$ be a Banach space such that $\A(E)$ has a right approximate identity. Then the following are equivalent:
	\begin{enumerate}
		\item[{\rm (a)}]  the Banach algebra $\A(E)$ is topologically right Noetherian;
		\item[{\rm (b)}]  every closed right ideal of $\A(E)$  is topologically principal;
		\item[{\rm (c)}]	the space $E$ is separable.
	\end{enumerate}
\end{theorem}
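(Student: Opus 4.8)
The plan is to prove the equivalences by mirroring the structure used in Theorem~\ref{2.3.8}, exploiting the lattice isomorphism $\Xi$ of Theorem~\ref{2.3.9} to translate statements about closed right ideals into statements about closed linear subspaces of $E$. As before, the implication (b)$\Rightarrow$(a) is immediate. The heart of the argument lies in establishing (c)$\Rightarrow$(b) and (a)$\Rightarrow$(c), and I expect the forward-looking dictionary to be: under $\widehat{\Xi}$, a right ideal of the form $\overline{T_1\A(E)+\cdots+T_n\A(E)}$ corresponds to the subspace $\overline{\im T_1+\cdots+\im T_n}$, by Lemma~\ref{2.3.10}.

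For (c)$\Rightarrow$(b), I would fix an arbitrary closed right ideal $I$ of $\A(E)$ and, by Theorem~\ref{2.3.9}, write $I=\mathscr{R}(F)$ where $F=\widehat{\Xi}(I)\in\SUB(E)$. Since $E$ is separable, so is its closed subspace $F$. By Lemma~\ref{2.3.11} there exists an approximable map $T\in\A(E)$ whose range is dense in $F$, i.e. $\overline{\im T}=F$. Then Lemma~\ref{2.3.10} (applied with $n=1$) gives $\widehat{\Xi}(\overline{T\A(E)})=\overline{\im T}=F=\widehat{\Xi}(I)$, and since $\widehat{\Xi}$ is a bijection we conclude $I=\overline{T\A(E)}$. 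Thus $I$ is topologically principal, and as $I$ was arbitrary, (b) holds. A small technical point to handle carefully: Lemma~\ref{2.3.11} produces a map into a general separable Banach space, so I must invoke it with the codomain taken to be the separable subspace $F\subset E$ and then compose with the inclusion $F\hookrightarrow E$ to land in $\A(E)$; I should check this composite is still approximable and has range dense in $F$ as a subspace of $E$.

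For (a)$\Rightarrow$(c), I would suppose $\A(E)$ is topologically right Noetherian and apply this to the unit ideal: $\A(E)=\overline{T_1\A(E)+\cdots+T_n\A(E)}$ for some $T_1,\ldots,T_n\in\A(E)$. Noting that $\mathscr{R}(E)=\A(E)$ and hence $\widehat{\Xi}(\A(E))=E$, Lemma~\ref{2.3.10} yields $E=\overline{\im T_1+\cdots+\im T_n}$. Each $T_i$ is approximable, hence compact, so each image $\im T_i$ is separable; therefore the finite sum, and its closure $E$, is separable. This is the exact dual of the final step of Theorem~\ref{2.3.8}, with $\im T_i$ in place of $\im T_i'$, and requires no duality argument because right ideals interact directly with ranges rather than with ranges of adjoints.

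The main obstacle I anticipate is purely bookkeeping rather than conceptual: ensuring that the two technical lemmas are applied with the correct variance. In the right-ideal setting everything is governed by \emph{ranges} $\im T$ rather than by images of adjoints $\im T'$, and by the right approximate identity hypothesis (which powers Lemma~\ref{2.1.1} and thereby Theorem~\ref{2.3.9}) rather than a left one; I must be vigilant that I am not inadvertently importing a dual statement. The genuinely substantive inputs---the lattice isomorphism, the computation of $\widehat{\Xi}$ on finitely generated right ideals, and the existence of approximable maps with prescribed dense range---are all supplied by the preceding results, so the proof should reduce to assembling these three facts in the correct order.
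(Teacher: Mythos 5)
Your proposal is correct and follows essentially the same route as the paper's proof: (b)$\Rightarrow$(a) is trivial, (a)$\Rightarrow$(c) applies Lemma~\ref{2.3.10} to the topologically finitely-generated ideal $\A(E)=\mathscr{R}(E)$ and uses compactness of approximable operators, and (c)$\Rightarrow$(b) combines Theorem~\ref{2.3.9}, Lemma~\ref{2.3.11}, and Lemma~\ref{2.3.10} exactly as the paper does. Your technical remark about composing the map from Lemma~\ref{2.3.11} with the inclusion $F\hookrightarrow E$ is sound (the composite of an approximable map with a bounded one is approximable, and $F$ is closed in $E$); the paper passes over this silently.
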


\begin{proof}
It is trivial that (b) implies (a). We show that (a) implies (c). Suppose that $\A(E)$ is topologically right Noetherian. Then $\A(E) = \mathscr{R}(E)$ is topologically finitely-generated so that, 
by Lemma \ref{2.3.10}, there exist $n \in \N$ and $T_1, \ldots, T_n \in \A(E)$ such that $E = \overline{\im T_1 + \cdots + \im T_n}.$
 Since each operator $T_i \ (i=1, \ldots, n)$ is compact, its image is separable, and hence so is $E$.

Now suppose instead that $E$ is separable, and let $I$ be a closed right ideal of $\A(E)$. 
Then, by Theorem \ref{2.3.9}, $I = \mathscr{R}(F)$ for 
some $F \in \SUB(E).$ By Lemma \ref{2.3.11} there exists $T \in \A(E)$ with $\overline{\im T} = F$. By Lemma \ref{2.3.10} we have $\widehat{\Xi}\left(\overline{T\A(E)} \right) = \overline{\im T} = F$,
so that, by Theorem \ref{2.3.9}, $I = \mathscr{R}(F) = \overline{T\A(E)}.$ Since  $I$ was arbitrary, this shows that (c) implies (b).
\end{proof}

We can now prove Theorem \ref{1.3} as a special case of our results above.

\begin{proof}[Proof of Theorem \ref{1.3}]
  Of course, under either hypothesis $\K(E) = \A(E)$. By \cite[Theorem 2.5 (ii)]{D}, $\K(E)$ has a left approximate identity whenever $E$ has the approximation property. Similarly,
  by \cite[Theorem 3.3]{GW} $\K(E)$ has a (bounded) two-sided approximate identity whenever $E'$ has the bounded approximation property. Hence the results follow from Theorem \ref{2.3.8} and
  Theorem \ref{2.3.12}.
\end{proof}

\textit{Remark.} Consider $\K(\ell^{\, 1})$. Of course, $(\ell^{\, 1})' \cong \ell^{\, \infty}$, which has BAP by \cite[Example 5(a), Chapter II E]{Woj}. Hence, by Theorem \ref{1.3}, $\K(\ell^{\, 1})$ is an
example of a Banach algebra which is topologically right Noetherian, but not topologically left Noetherian.

\vskip 2mm

We observe that, although it talks about algebraically finitely-generated ideals, the argument given in \cite[Corollary 3.2]{DKKKL} actually proves that any Banach space $E$ satisfying its hypothesis has
the property that $\B(E)$ is not topologically left Noetherian. Indeed, the argument there is to demonstrate that there are more (maximal) closed left ideals in $\B(E)$ than there are finite $n$-tuples of operators. Hence not every closed left ideal can be topologically finitely-generated. This covers a large class of Banach spaces, including, for example, $c_0$, $\ell^{\, p}$ for $1 \leq p < \infty$, $L^{\, p}[0,1]$ for $1<p<\infty$,
and many other spaces discussed in \cite{DKKKL}. In the case that $E$ is a reflexive Banach space, $\B(E)$ is a dual Banach algebra with predual given by $E \widehat{\otimes} E'$. We shall show in the next
section that, for a reflexive Banach space $E$ with the approximation property, $\B(E)$ is weak*-topologically left Noetherian whenever $\K(E)$ is. Hence in particular, many of the above examples which fail to be $\Vert \cdot \Vert$-topologically left Noetherian are weak*-topologically left Noetherian.

We note however that it is possible for $\B(E)$ to be topologically left Noetherian, for an infinite-dimensional Banach space $E$.
Let $E_{AH}$ be the Banach space constructed by Argyros and Haydon in \cite{AH} with the property that 
$\B(E_{AH}) = \C {\rm \, id}_{E_{AH}} + \K(E_{AH}).$
Since $E_{AH}$ is a predual of $\ell^{\, 1}$, which has BAP, it satisfies the hypotheses of part (i) and (ii) of  Theorem \ref{1.3}.
Since $\B(E_{AH}) = \K(E_{AH})^\sharp$, Theorem \ref{2.3.8} and Lemma \ref{2.1.1a}(iii) imply
that $\B(E_{AH})$ is $\Vert \cdot \Vert$-topologically left and right Noetherian. 
Further examples come from \cite{MPZ}, where  the authors construct, for each countably infinite, compact metric space $X$, a predual of  $\ell^{\, 1}$, say $E_X$, such that $\B(E_X)/\K(E_X) \cong C(X)$. Since $C(X)$ is topologically Noetherian for any compact metric space $X$, a similar argument shows that the Banach algebras $\B(E_X)$ are topologically left and right Noetherian. In all of these examples the Banach space is hereditarily indecomposable. There is no hereditarily indecomposable Banach space $E$ for which we know that $\B(E)$ is not topologically left/right Noetherian.

%In fact, there are various other Banach spaces $E$ which are preduals of $\ell^{\, 1}$, and which have the property that $\B(E)/K(E)$ is isomorphic to a known topologically left and right Noetherian Banach algebra, and for these spaces a similar argument shows that $\B(E)$ is topologically left and right Noetherian. For example, in \cite{MPZ} the authors construct, for each countably infinite, compact metric space $X$, a predual of  $\ell^{\, 1}$, say $E_X$, such that $\B(E_X)/\K(E_X) \cong C(X)$. In all of these examples the Banach space is hereditarily indecomposable. There is no hereditarily indecomposable Banach space $E$ for which we know that $\B(E)$ is not topologically left/right Noetherian.

%A similar argument shows that $\B(E)$ is topologically left and right Noetherian whenever $E$ is any of the following spaces, each of which is a predual of $\ell^{\, 1}$, and each of which has the property that $\B(E)/ \K(E)$ is know to be topologically left and right Noetherian: the spaces constructed in \cite{} with the property that $\B(E)/\K(E) \cong C(X)$, for $X$ a countable, compact metric space; 

\section{Multiplier Algebras and Dual Banach Algebras}		\label{S3.4}
\noindent
In this section we consider those Banach algebras $A$ whose multiplier algebra is a dual Banach algebra. We shall focus on the case in which $A$ has a bounded approximate identity. Examples of such Banach algebras included $L^1(G)$ for $G$ a locally compact group, and $\B(E)$ for $E$ a reflexive Banach space with AP. Other examples include the Fig{\`a}-Talamanca--Herz algebras $A_p(G)$ for $G$ a locally compact amenable group, and $p \in (1, \infty)$, with the predual of $M(A_p(G))$ given by $PF_p(G)$, the algebra of p-pseudo-functions of $G$. Also $L^{\, 1}(\mathbb{G})$, where $\mathbb{G}$ is a locally compact quantum group in the sense of Kustermans and Vaes, fits into this setting whenever it has a bounded approximate identity \cite{Da}.

We prove in Proposition \ref{2.0.9} that, for a Banach algebra $A$ satisfying a fairly mild condition, the multiplier algebra $M(A)$ is weak*-topologically left Noetherian
whenever $A$ is $\Vert \cdot \Vert$-topologically left Noetherian. Corollary \ref{1.4} then follows. We go on to prove that for a certain, more restrictive class of Banach algebras there is a bijective
correspondence between the closed left ideals of $A$ and the weak*-closed left ideals of $M(A)$ (Theorem  \ref{2.0.11b}). 

For background on multiplier algebras see one of \cite{Dal, Da, P}. We say that $A$ is \textit{faithful} if $Ax = \{ 0 \}$ implies $x = 0 \ (x \in A)$, and also $xA = \{ 0 \}$ implies $x = 0 \ (x \in A)$. We recall that the canonical map
 $A \rightarrow M(A)$ is injective if and only if $A$ is faithful. When $A$ has a bounded approximate identity, this map is bounded below, so that $A$ is isomorphic to its image inside $M(A)$. When this is the case we shall identify $A$ with its image inside $M(A)$.

In this section, when we consider a linear functional applied to a vector, we shall often use a subscript to indicate the exact dual pairing. So for example, if $A$ is a Banach algebra, and $a \in A$ and $f \in A'$, we might write $\langle a, f \rangle_{(A, \, A')}$ or $\langle f, a \rangle_{(A', \, A)}$ for the value of $f$ applied to $a$.

In \cite{Da} Daws considers Banach algebras whose multiplier algebras are also dual Banach algebras. We shall use the following consequence of Daws' work, which essentially says that when such a Banach algebra has a bounded approximate identity, the multiplier and dual structures are compatible in a natural way.

\begin{theorem}		\label{5.1}
Let $A$ be a Banach algebra with a bounded approximate identity, and suppose that $M(A)$ is a dual Banach algebra, with predual $X$. Then $X$ may be identified with a closed $A$-submodule of  $A \cdot A' \cdot A$ in such a way that 
	\begin{equation}		\label{eq5.1} 
		\langle f \cdot a, \mu \rangle_{(X, \, M(A))} = \langle f, a\mu \rangle_{(A', \, A)},
	\end{equation}
	\begin{equation}		\label{eq5.2} 
		\langle a \cdot f, \mu \rangle_{(X, \, M(A))} = \langle f, \mu a \rangle_{(A', \, A)},
	\end{equation}
for all $\mu \in M(A)$, and  all $a \in A$ and $f \in A'$ with $f\cdot a \in X$/ $a \cdot f \in X$ respectively. We also have
	\begin{equation}		\label{eq5.3}
		\langle x, a \rangle_{(X, \, M(A))} = \langle x, a \rangle_{(A', \, A)},
	\end{equation}
for all $x \in X$ and $a \in A$.
\end{theorem}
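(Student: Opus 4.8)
The plan is to realise the predual $X$ concretely inside $A'$ via the map dual to the inclusion $A \hookrightarrow M(A)$, and then to read off the three identities from the interplay between the bounded approximate identity of $A$ and the separate weak*-continuity of the multiplication on $M(A)$. First I would record the structural set-up: since $A$ has a bounded approximate identity it is faithful and the canonical map $A \to M(A)$ is bounded below, so I identify $A$ with a closed ideal of $M(A)$; using the characterisation of dual Banach algebras recalled above I regard $X$ as a closed $M(A)$-submodule of $M(A)'$ with $\langle \cdot, \cdot \rangle_{(X,\, M(A))}$ the restriction of the canonical pairing. I then let $R \colon X \to A'$, $x \mapsto x|_A$, be the restriction of the adjoint of the inclusion. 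Once we agree to write $x$ for both $x$ and $R(x)$, equation \eqref{eq5.3} is immediate from the definition of $R$.

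The engine of the argument is the claim that the bounded approximate identity $(e_\alpha)$ of $A$ converges weak* to the identity $1$ of $M(A)$. To see this I would take any weak*-cluster point $E$ of the bounded net $(e_\alpha)$; separate weak*-continuity of the multiplication together with $e_\alpha a \to a$ and $a e_\alpha \to a$ in norm forces $Ea = a = aE$ for all $a \in A$, and faithfulness of $A$ then gives $E = 1$, so the net has a unique cluster point and converges. Consequently, for every $\mu \in M(A)$ one has $e_\alpha \mu \to \mu$ and $\mu e_\alpha \to \mu$ weak* (again by separate weak*-continuity), with $e_\alpha \mu, \mu e_\alpha \in A$. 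This yields two things at once: if $x|_A = 0$ then $x(\mu) = \lim_\alpha x(e_\alpha \mu) = 0$, so $R$ is injective; and since $\Vert e_\alpha \mu \Vert_A$ is bounded by a fixed multiple of $\Vert \mu \Vert$, the estimate $\vert x(e_\alpha \mu) \vert \le \Vert x|_A \Vert \, \Vert e_\alpha \mu \Vert_A$ passes to the limit to show $R$ is bounded below, hence has closed image.

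With $R$ in hand the identities fall out. For \eqref{eq5.1}, given $f \in A'$ and $a \in A$ with $f \cdot a \in X$, set $y = R^{-1}(f \cdot a) \in X$, so that $y(b) = f(ab)$ for $b \in A$; weak*-continuity of $y$ then gives $y(\mu) = \lim_\alpha y(e_\alpha \mu) = \lim_\alpha f(a e_\alpha \mu) = f(a \mu)$, the last step using $a e_\alpha \to a$ in norm. This is exactly \eqref{eq5.1}, and \eqref{eq5.2} follows by the symmetric computation, now approximating $\mu$ on the right by $\mu e_\alpha$. Finally, to see that $R$ maps $X$ into $A \cdot A' \cdot A$ I would use Cohen factorisation: for $g = x|_A$ the functionals $e_\alpha \cdot g \cdot e_\beta \in A \cdot A' \cdot A$ satisfy $\langle e_\alpha \cdot g \cdot e_\beta, c \rangle = g(e_\beta c e_\alpha) = x(e_\beta c e_\alpha) \to x(c)$, so $g$ lies in the weak, hence norm, closure of the convex set $A \cdot A' \cdot A$, which equals $A \cdot A' \cdot A$ because $A$ has a bounded approximate identity. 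That $R$ intertwines the one-sided $A$-actions is a routine check on the module formulas, so $X$ is identified with a closed $A$-submodule of $A \cdot A' \cdot A$.

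The step I expect to be the main obstacle is the weak*-density of $A$ in $M(A)$, that is, $e_\alpha \to 1$ weak*: everything else is bookkeeping around the restriction map, but this density is precisely what converts statements about $A$ into statements about all of $M(A)$, and it is the only place where faithfulness and the separate weak*-continuity of the multiplication are genuinely used. One could alternatively simply quote the relevant parts of Daws \cite{Da}, where these compatibilities between the multiplier and predual structures are established in the required generality.
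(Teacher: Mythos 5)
Most of your construction is sound, and it is considerably more self-contained than the paper's own proof, which simply invokes \cite[Theorem 7.9]{Da} together with the remarks following it, and then deduces \eqref{eq5.3} from \eqref{eq5.1} by writing $x = f \cdot b$. Your argument that the bounded approximate identity converges weak* to $1 \in M(A)$, the injectivity and bounded-belowness of the restriction map $R$, and the limit computations establishing \eqref{eq5.1} and \eqref{eq5.2} are all correct (and \eqref{eq5.3} is indeed automatic once the identification is \emph{defined} via $R$).

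The genuine gap is the final step, the containment $R(X) \subset A \cdot A' \cdot A$. What you verify is that $\langle e_\alpha \cdot g \cdot e_\beta, c \rangle \to \langle g, c \rangle$ for each $c \in A$: that is convergence in the weak* topology $\sigma(A', A)$, not the weak topology $\sigma(A', A'')$, and Mazur's theorem (norm closure of a convex set equals its weak closure) applies only to the latter; the weak*-closure of a convex set may be strictly larger than its norm closure. A telling sanity check: your computation never uses that $g$ comes from an element of $X$ --- it works verbatim for \emph{every} $g \in A'$, so if it were valid it would prove $A' = A \cdot A' \cdot A$ for any Banach algebra with a bounded approximate identity. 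This is false: for $A = L^{\, 1}(G)$ with $G$ non-compact, $A \cdot A' \cdot A$ is the space of two-sided uniformly continuous bounded functions, a proper closed subspace of $L^{\, \infty}(G)$. To repair the step you must use the weak*-continuity of $x$ on $M(A)$ in an essential way, roughly as Daws does: first show that $A \cdot X \cdot A$ is norm-dense in $X$ --- if $\mu \in X' = M(A)$ annihilates it, then $\langle x, b \mu a \rangle = 0$ for all $x \in X$ and $a, b \in A$, so $b \mu a = 0$ since $X$ separates points of $M(A)$, and faithfulness (via the bounded approximate identity) forces $\mu = 0$; then apply Cohen's factorisation theorem for essential Banach modules, on each side, to conclude $X = A \cdot X \cdot A$; finally transport through $R$, noting $R(a \cdot y \cdot b) = a \cdot R(y) \cdot b \in A \cdot A' \cdot A$. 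This factorisation inside $X$ is precisely the substantive content of \cite[Theorem 7.9]{Da}; your closing remark that one could ``simply quote'' Daws is accurate, but the soft closure argument you offer in its place does not work.
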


\begin{proof}
The fact that $X$ may be identified with a closed $A$-submodule of  $A \cdot A' \cdot A$ follows immediately from \cite[Theorem 7.9]{Da} and the remarks following it, and Equations \eqref{eq5.1} and \eqref{eq5.2} then follow by chasing through the definition of the map $\theta_0$ of that theorem.

Equation \eqref{eq5.3} then follows from \eqref{eq5.1}: given $a \in A$ and $x \in X$, let $b \in A$ and $f \in A'$ satisfy $x = f \cdot b$. Then 
	\begin{align*}
		\langle f \cdot b, a \rangle_{(X, \, M(A))} 
		= \langle f, ba \rangle_{(A', \, A)}
		=  \langle f \cdot b, a \rangle_{(A', \, A)},
	\end{align*}
	as required.
\end{proof}

\textit{Remark.} Commutative Banach algebras whose multiplier algebras are dual Banach algebras satisfying \eqref{eq5.1}/\eqref{eq5.2} were considered by \"Ulger in \cite{U}. Since \"Ulger always assumes the existence of a bounded approximate identity, Theorem \ref{5.1} allows the hypothesis of \cite[Theorem 3.7]{U} to be simplified slightly.
\vskip 2mm

We note the following.

\begin{lemma} \label{2.0.5}
Let $A$ be a Banach algebra with a bounded approximate identity, such that $M(A)$ is a dual Banach algebra. Then $A$ is weak*-dense in $M(A)$.
\end{lemma}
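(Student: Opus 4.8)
The plan is to show that $A$ is weak*-dense in $M(A)$ by exhibiting a net in $A$ (identified with its image) that converges weak* to an arbitrary multiplier $\mu \in M(A)$. The natural candidate is $(a e_\alpha)_\alpha$ or $(e_\alpha \mu)_\alpha$, where $(e_\alpha)$ is the bounded approximate identity; since $A$ has a bounded approximate identity and is faithful (being a Banach algebra with a b.a.i.), the canonical embedding $A \hookrightarrow M(A)$ is an isomorphism onto its image, so it makes sense to ask whether $e_\alpha \mu \to \mu$ weak*. Here $e_\alpha \mu \in A$ because $A$ is a two-sided ideal in $M(A)$, so this net does lie in $A$.

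First I would fix $\mu \in M(A)$ and $x \in X$, the predual of $M(A)$, and compute $\langle x, e_\alpha \mu \rangle_{(X, M(A))}$, aiming to show it converges to $\langle x, \mu \rangle_{(X, M(A))}$. The key tool is Theorem \ref{5.1}: since $X$ may be identified with a closed $A$-submodule of $A \cdot A' \cdot A$, we may write $x = f \cdot a$ for some $f \in A'$ and $a \in A$ (using Cohen factorisation for the module action, as $A$ has a b.a.i.). Then by Equation \eqref{eq5.1} we have
\begin{equation*}
	\langle f \cdot a, e_\alpha \mu \rangle_{(X, \, M(A))} = \langle f, a (e_\alpha \mu) \rangle_{(A', \, A)} = \langle f, (a e_\alpha) \mu \rangle_{(A', \, A)}.
\end{equation*}
Now $a e_\alpha \to a$ in the norm of $A$ (this is where the approximate identity does its work), and multiplication by $\mu$ is a bounded operator on $A$, so $(a e_\alpha)\mu \to a \mu$ in norm. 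Hence, since $f$ is a bounded functional,
\begin{equation*}
	\langle f, (a e_\alpha) \mu \rangle_{(A', \, A)} \longrightarrow \langle f, a\mu \rangle_{(A', \, A)} = \langle f \cdot a, \mu \rangle_{(X, \, M(A))} = \langle x, \mu \rangle_{(X, \, M(A))},
\end{equation*}
where the penultimate equality is again \eqref{eq5.1}. Since $x \in X$ was arbitrary and $X$ is the predual of $M(A)$, this shows $e_\alpha \mu \to \mu$ weak*, completing the argument.

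The main obstacle I anticipate is purely bookkeeping: making sure the factorisation $x = f \cdot a$ is legitimate and that the dual pairings are being interpreted consistently. One must confirm that Cohen's factorisation theorem applies to give $X = A \cdot A' \cdot A = A \cdot (A' \cdot A)$ as a factored module (the excerpt already records $A \cdot A' = \overline{\spn}(A \cdot A')$ when $A$ has a b.a.i., so a genuine factorisation $x = f \cdot a$ is available), and that each $e_\alpha \mu$ genuinely lands in $A$ under the identification of $A$ with its image in $M(A)$. The only subtlety is keeping track of which pairing $\langle \cdot, \cdot \rangle_{(X, M(A))}$ versus $\langle \cdot, \cdot \rangle_{(A', A)}$ is in force at each step, but Theorem \ref{5.1} is tailored precisely to translate between them, so once the factorisation is in hand the convergence is a short norm-continuity estimate rather than anything deep.
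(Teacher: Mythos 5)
Your proof is correct, but it takes a genuinely different route from the paper's. The paper argues by annihilators: using only Equation \eqref{eq5.3} of Theorem \ref{5.1} --- the compatibility of the two pairings on elements of $A$ --- it shows that any $x \in A_\perp \subset X$ pairs to zero with every $a \in A$, hence vanishes as an element of $A'$, so that $A_\perp = \{0\}$ and $\overline{A}^{w^*} = (A_\perp)^\perp = M(A)$ by \eqref{eq2.1}. You instead produce, for each $\mu \in M(A)$, the explicit net $(e_\alpha \mu) \subset A$ and verify weak*-convergence directly from \eqref{eq5.1}, re-running the Cohen factorisation $x = f \cdot a$ that the paper has already packaged into its proof of \eqref{eq5.3} (indeed, the paper derives \eqref{eq5.3} from \eqref{eq5.1} by exactly this factorisation, so your argument essentially inlines that step rather than quoting it). All the points you flagged do check out: $A$ is faithful since it has a bounded approximate identity, so it embeds in $M(A)$ as an ideal and $e_\alpha \mu \in A$; every $x \in X \subset A \cdot A' \cdot A$ factors as $f \cdot a$ with $f \in A'$ and $a \in A$; and right multiplication by $\mu$ is a bounded operator on $A$, so $(a e_\alpha)\mu \to a\mu$ in norm. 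The trade-off is this: the paper's annihilator argument is shorter, amounting to two lines once Theorem \ref{5.1} is in hand, while your constructive version yields slightly more information, namely that each $\mu \in M(A)$ is the weak*-limit of the \emph{bounded} net $(e_\alpha \mu)$, i.e.\ every point of $M(A)$ lies in the weak*-closure of a bounded subset of $A$ --- a refinement that the bipolar computation does not give directly and which can be useful in Krein--\v{S}mulian-type arguments.
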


\begin{proof}
 Let $X$ be the predual of $M(A)$.
 % By \cite[Theorem 7.9]{Da} we may identify $X$ with a closed subspace of $A'$.
  Suppose $x \in A_{\perp} \subset X$. Then, by Theorem \ref{5.1}, we may identify $X$ with a closed subspace of $A'$, and for all $a \in A$ we have $0 = \langle x, a \rangle_{(X, \, M(A))} = \langle x, a \rangle_{(A', \, A)}$, which implies that $x = 0$. Hence $\overline{A}^{w^*} = (A_\perp)^\perp = \{ 0 \}^\perp = M(A)$.
\end{proof}

We now prove a result about weak*-topological left Noetherianity in this setting. Note that,  by the previous lemma,  the hypothesis is satisfied by any Banach algebra with a bounded approximate identity whose multiplier algebra is a dual Banach algebra. 

\begin{proposition}  \label{2.0.9}
Let $A$ be a Banach algebra such that $M(A)$ admits the structure of a dual Banach algebra in such a way that $A$ is weak*-dense in $M(A)$. Suppose that for every closed left ideal $I$ in $A$ there exists $n \in \N$ and there exist $\mu_1, \ldots, \mu_n \in M(A)$ such that $I = \overline{A^\sharp\mu_1+\cdots + A^\sharp \mu_n}$. Then $M(A)$ is weak*-topologically left Noetherian. In particular, $M(A)$ is weak*-topologically left Noetherian whenever $A$ is $\Vert \cdot \Vert$-topologically left Noetherian.
\end{proposition}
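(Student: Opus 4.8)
The plan is to relate the weak*-closed left ideals of $M(A)$ to the norm-closed left ideals of $A$ by intersecting with $A$, to invoke the hypothesis on the latter, and then to push the resulting finite generation back up to $M(A)$ using the weak*-density of $A$ together with the separate weak*-continuity of the product. Throughout I identify $A$ with its (weak*-dense) image in $M(A)$, and write $\overline{X}$ and $\overline{X}^{w^*}$ for the norm- and weak*-closures of $X$ in $M(A)$. First I would fix a weak*-closed left ideal $J$ of $M(A)$ and put $I := J \cap A$. Since $A$ is a subalgebra of $M(A)$ and $J$ is a left ideal of $M(A)$, for $a \in A$ and $x \in I$ we have $ax \in A$ and $ax \in J$, so $ax \in I$; as $I$ is the intersection of the norm-closed sets $A$ and $J$, it is a closed left ideal of $A$. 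The hypothesis then furnishes $n \in \N$ and $\mu_1, \ldots, \mu_n \in M(A)$ with $I = \overline{A^\sharp \mu_1 + \cdots + A^\sharp \mu_n}$.

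The key step is to recover $J$ from $I$, namely to prove $J = \overline{I}^{w^*}$. For this I would apply Lemma \ref{2.1.1} to the semi-topological algebra $(M(A), w^*)$, taking the dense right ideal there to be $A$ itself: it is weak*-dense by assumption and, being a two-sided ideal of $M(A)$, is in particular a right ideal, while $M(A)$ is unital and so possesses a weak* left approximate identity (its unit). Lemma \ref{2.1.1} then yields that $A$ meets every weak*-closed left ideal of $M(A)$ weak*-densely, so that $J = \overline{J \cap A}^{w^*} = \overline{I}^{w^*}$. Because norm-closures are contained in weak*-closures, this gives $J = \overline{A^\sharp \mu_1 + \cdots + A^\sharp \mu_n}^{w^*}$.

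It remains to replace the coefficient algebra $A^\sharp$ by $M(A)^\sharp$ inside the weak*-closure. The inclusion $\overline{A^\sharp \mu_1 + \cdots + A^\sharp \mu_n}^{w^*} \subset \overline{M(A)^\sharp \mu_1 + \cdots + M(A)^\sharp \mu_n}^{w^*}$ is immediate. For the reverse, given $\nu \in M(A)$ I would use weak*-density to pick a net $(a_\beta) \subset A$ with $a_\beta \to \nu$ weak*; separate weak*-continuity of multiplication then gives $a_\beta \mu_i \to \nu \mu_i$ weak*, with each $a_\beta \mu_i \in A \mu_i$, so that $M(A)^\sharp \mu_i \subset \overline{A^\sharp \mu_i}^{w^*}$. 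Summing over $i$ and taking weak*-closures shows the two weak*-closed left ideals coincide, whence $J = \overline{M(A)^\sharp \mu_1 + \cdots + M(A)^\sharp \mu_n}^{w^*}$ is weak*-topologically finitely-generated. As $J$ was arbitrary, $M(A)$ is weak*-topologically left Noetherian. The final assertion is then immediate: if $A$ is $\Vert \cdot \Vert$-topologically left Noetherian, every closed left ideal of $A$ is already of the required form with generators $x_i \in A \subset M(A)$, so the hypothesis holds.

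I expect the main obstacle to be the step $J = \overline{J \cap A}^{w^*}$, the weak*-analogue of Lemma \ref{2.1.1}: everything downstream hinges on reconstructing $J$ from its intersection with $A$, and the point to check carefully is that $A$, as a weak*-dense right ideal, together with the trivial weak* left approximate identity coming from the unit of $M(A)$, genuinely places us within the scope of that lemma. The coefficient swap $A^\sharp \leftrightarrow M(A)^\sharp$ and the verification that $J \cap A$ is a closed left ideal are comparatively routine once separate weak*-continuity and weak*-density are in hand.
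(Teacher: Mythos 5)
Your proof is correct and follows essentially the same route as the paper's: intersect the weak*-closed left ideal with $A$, apply Lemma \ref{2.1.1} to the weak*-dense (two-sided, hence right) ideal $A$ in the unital semi-topological algebra $(M(A),w^*)$ to get $J=\overline{J\cap A}^{\,w^*}$, and then pass from norm-generation of $J\cap A$ to weak*-generation of $J$. The coefficient swap $A^\sharp\leftrightarrow M(A)$, which you verify via separate weak*-continuity of multiplication, is exactly the step the paper compresses into its final ``It follows that'' display.
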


\begin{proof}
Let $I$ be a weak*-closed left ideal of $M(A)$. Since $A$ is weak*-dense in $M(A)$, which is unital, Lemma \ref{2.1.1} implies that $A \cap I$ is weak*-dense in $I$. 
On the other hand, $A \cap I$ is a closed left ideal in  $A$, so there exists $n \in \N$, and there exist $\mu_1, \ldots, \mu_n \in M(A)$ such that $A \cap I = \overline{A^\sharp \mu_1+\cdots + A^\sharp \mu_n}$. It follows that 
$$I = \overline{A^\sharp \mu_1+\cdots + A^\sharp \mu_n}^{w^*} = \overline{M(A) \mu_1+\cdots + M(A) \mu_n}^{w^*}.$$
 As $I$ was arbitrary the result follows.
\end{proof}

We are now able to prove Corollary \ref{1.4} concerning the weak*-topological left/right Noetherianity for algebras of the form $M(G)$ and $\B(E)$. Note that, by Proposition \ref{2.1.3} and the discussion at the end of
Section 4, these algebras are often not $\Vert \cdot \Vert$-topologically left/right Noetherian 

\begin{proof}[Proof of Corollary \ref{1.4}]
  This follows from Proposition \ref{2.0.9}, Theorem \ref{1.2}, and Theorem \ref{1.3}.
\end{proof}

\begin{definition} \label{2.0.6}
Let $A$ be a Banach algebra. We say that $A$ is \textit{a compliant Banach algebra} if $A$ is faithful and $M(A)$ is a dual Banach algebra in such a way that, for each $a \in A$, the maps $M(A) \rightarrow A$ given by $\mu \mapsto \mu a$ and $\mu \mapsto a \mu$ are weak*-weakly continuous.
\end{definition}

In this article we shall consider the  ideal structure of compliant Banach algebras, but we note that they appear to have interesting properties more broadly and are worthy of further study. In the papers \cite{HA1} and \cite{HA2}  Hayati and Amini consider Connes amenability of certain multiplier algebras which are also dual Banach algebras. In our terminology, \cite[Theorem 3.3]{HA2} says that if $A$ is a compliant Banach algebra with a bounded approximate identity, then $A$ is amenable if and only if $M(A)$ is Connes amenable.

We have the following family of examples of compliant Banach algebras.

\begin{lemma}		\label{2.0.4a}
Let $A$ be a Banach algebra with a bounded approximate identity which is Arens regular and an ideal in its bidual. Then $A$ is a compliant Banach algebra.
\end{lemma}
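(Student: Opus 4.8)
The plan is to verify the definition of a compliant Banach algebra directly for $A$ under the stated hypotheses: $A$ has a bounded approximate identity, is Arens regular, and is an ideal in its bidual $A''$. Faithfulness is immediate, since a Banach algebra with a bounded approximate identity $(e_\alpha)$ satisfies $a = \lim_\alpha e_\alpha a$, so $Ax = \{0\}$ forces $x=0$, and symmetrically for $xA = \{0\}$. The substantive content is to establish that $M(A)$ is a dual Banach algebra and that the module actions $\mu \mapsto \mu a$ and $\mu \mapsto a\mu$ are weak*-to-weakly continuous for each fixed $a \in A$.

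The key structural fact I would exploit is that, when $A$ is an ideal in its bidual $A''$, the multiplier algebra $M(A)$ can be identified with a subalgebra of $A''$: specifically, since $A \triangleleft A''$, each element $\Phi \in A''$ gives rise to a multiplier of $A$ via $a \mapsto \Phi \cdot a$ and $a \mapsto a \cdot \Phi$ (using the Arens products), and conversely every multiplier extends to such an element. Arens regularity ensures the two Arens products on $A''$ coincide, so $A''$ is an honest Banach algebra and there is no ambiguity in these actions. Under Arens regularity, $A''$ is in fact a dual Banach algebra with predual $A'$ (this is the standard characterization: $A$ is Arens regular precisely when the multiplication on $A''$ is separately weak*-continuous, making $(A'', A')$ a dual Banach algebra). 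First I would make the identification $M(A) \hookrightarrow A''$ precise and show that $M(A)$ is a weak*-closed subalgebra of $A''$; it then inherits a predual as a quotient of $A'$, namely $A'/M(A)_\perp$, and separate weak*-continuity of multiplication on $M(A)$ descends from that on $A''$. This gives the dual Banach algebra structure on $M(A)$.

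For the weak*-to-weak continuity of $\mu \mapsto \mu a$ and $\mu \mapsto a\mu$, I would argue as follows. Fix $a \in A$. The map $\mu \mapsto \mu a$ is the restriction to $M(A)$ of the map $A'' \to A''$, $\Phi \mapsto \Phi \cdot a$, which is the right multiplication operator by $a$ on $A''$. Since $A''$ is a dual Banach algebra, right multiplication by a fixed element is weak*-weak*-continuous on $A''$; but the target value $\Phi \cdot a$ lies in $A$ because $A$ is an ideal in $A''$, and on the subspace $A$ the weak* topology inherited from $A''$ agrees with the weak topology of $A$ (as $A$ sits inside its bidual with its weak topology being the restriction of the weak* topology of $A''$). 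Hence $\mu \mapsto \mu a$ is weak*-to-weakly continuous, and symmetrically for $\mu \mapsto a\mu$. This completes the verification that $A$ is compliant.

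The main obstacle I anticipate is pinning down the identification of $M(A)$ inside $A''$ cleanly and checking that the predual structure on $M(A)$ is genuinely compatible — in particular confirming that the weak* topology on $M(A)$ coming from its predual coincides with the subspace weak* topology inherited from $A''$, and that $M(A)$ is indeed weak*-closed in $A''$. The delicate point is that the landing of $\mu a$ in the ideal $A$ is exactly what converts a weak*-continuity statement (which only guarantees closeness in the weak* topology of $A''$) into the required weak continuity in $A$; I would want to make sure the interplay between the ideal property, Arens regularity, and the bounded approximate identity is invoked correctly at each step, rather than merely plausibly. The routine module and continuity computations I would relegate to straightforward verification once this framework is in place.
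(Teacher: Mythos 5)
Your proposal is correct and takes essentially the same approach as the paper: the paper cites Lai's theorem to identify $M(A)$ with all of $A''$ (the bounded approximate identity makes the embedding you describe surjective, so your worries about weak*-closedness and a quotient predual $A'/M(A)_\perp$ dissolve --- the predual is simply $A'$), uses Arens regularity to make $A''$ a dual Banach algebra, and obtains the weak*-to-weak continuity of $\mu \mapsto \mu a$ and $\mu \mapsto a\mu$ exactly as you do, from the ideal property together with the fact that the weak* topology of $A''$ restricts on $A$ to the weak topology of $A$.
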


\begin{proof}	
By \cite[Theorem 3.9]{L} $A''$ as an algebra with Arens multiplication may be identified
 with $M(A)$. Arens regularity implies that $A''$ is a dual Banach algebra with predual
  $A'$. Given $a \in A$, the maps $\mu \mapsto \mu a$ and $\mu \mapsto a \mu$ are 
  weak*-continuous as maps from $A''$ to itself, and hence they are weak*-weakly
  continuous when considered as maps from $A''$ to $A$. 
\end{proof}

It follows from Lemma \ref{2.0.4a} that $c_0(\N)$ is an example of a compliant Banach algebra. A family of examples that will be important to us is the following:

\begin{corollary}		\label{2.0.4b}
Let $E$ be a reflexive Banach space with the approximation property. Then $\K(E)$ is a compliant Banach algebra.
\end{corollary}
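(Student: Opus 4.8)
The plan is to verify that $\K(E)$ satisfies the three requirements in Definition \ref{2.0.6}: faithfulness, that $M(\K(E))$ is a dual Banach algebra, and that the one-sided multiplications $\mu \mapsto \mu a$ and $\mu \mapsto a\mu$ are weak*-weakly continuous for each $a \in \K(E)$. Since $E$ is reflexive with AP, we have $\K(E) = \A(E)$, and since $E'$ has AP as well, $\K(E)$ has a bounded two-sided approximate identity (as recalled in the proof of Theorem \ref{1.3}). The strategy is to identify $M(\K(E))$ explicitly with $\B(E)$, exploit the fact that $\B(E)$ is a dual Banach algebra with predual $E \widehat{\otimes} E'$, and then check the continuity condition directly on elementary tensors.

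First I would establish that $M(\K(E)) = \B(E)$. Because $E$ is reflexive with AP, every operator $S \in \B(E)$ gives a multiplier $T \mapsto ST$, $T \mapsto TS$ on $\K(E)$, and conversely, using the approximate identity together with the fact that the approximate identity elements are finite-rank (or using Lemma \ref{2.0.5}-type density arguments), every multiplier arises this way; faithfulness of $\K(E)$ is immediate since $\K(E)$ contains all rank-one operators $x \otimes \lambda$, so $\K(E) \, S = \{0\}$ forces $S = 0$, and similarly on the other side. Next I would invoke reflexivity of $E$: then $\B(E)$ is a dual Banach algebra with predual $E \widehat{\otimes} E'$, where the multiplication is separately weak*-continuous, which supplies the required dual Banach algebra structure on $M(\K(E))$.

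The remaining and genuinely substantive step is the weak*-weak continuity of $\mu \mapsto \mu a$ and $\mu \mapsto a\mu$ from $\B(E)$ into $\K(E)$, for fixed $a \in \K(E)$. Here I would reduce to the case where $a$ is a finite-rank operator, since finite-rank operators are norm-dense in $\K(E)$ and the maps in question are uniformly bounded in $a$, so continuity for finite-rank $a$ passes to the norm-limit. For a rank-one $a = x \otimes \lambda$, the map $\mu \mapsto \mu a = \mu \circ (x \otimes \lambda) = (\mu x) \otimes \lambda$ is, up to the fixed factor $\lambda$, essentially the evaluation $\mu \mapsto \mu x$, which is weak*-to-weak continuous by reflexivity (the weak* topology on $\B(E)$ restricts on bounded sets to pointwise-weak convergence, and weak convergence in the reflexive space $E$ is what we need); linearity extends this to all finite-rank $a$. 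The analogous computation $a\mu = (x \otimes \lambda) \circ \mu = x \otimes (\mu' \lambda)$ handles the left multiplication, using that $\mu \mapsto \mu'$ is weak*-weak* continuous and reflexivity of $E'$.

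The main obstacle I anticipate is the continuity step: one must be careful that weak*-weak continuity is only being claimed for the composite landing in $\K(E)$ (not weak*-weak* continuity into $\B(E)$), and that reflexivity is exactly what converts pointwise weak* control on $\mu$ into genuine weak convergence of $\mu a$ inside the compact operators. I would expect the cleanest route is to phrase the reduction to finite-rank $a$ carefully, verify the rank-one case by the explicit tensor computation above, and note that the reflexivity of $E$ (hence of $E'$) is used precisely to identify the predual $E \widehat{\otimes} E'$ correctly and to ensure the relevant nets converge weakly rather than merely weak*.
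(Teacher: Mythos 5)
Your proposal is correct in outline, but it takes a genuinely different route from the paper. The paper's own proof is a two-line reduction to an abstract lemma: by Young's theorem $\K(E)$ is Arens regular for reflexive $E$, and $\K(E)'' = \B(E)$ shows $\K(E)$ is an ideal in its bidual, so Lemma \ref{2.0.4a} applies (there, Lai's theorem identifies $M(A)$ with $A''$ under the Arens product, Arens regularity makes $A''$ a dual Banach algebra with predual $A'$, and weak*-continuity of the Arens product, composed with the ideal property, gives the weak*-weak continuity into $A$ for free). Your proof instead verifies Definition \ref{2.0.6} by hand: identifying $M(\K(E)) = \B(E)$ directly, taking the predual $E \widehat{\otimes} E'$ explicitly, and checking the continuity condition on rank-one operators via the computations $\mu(x \otimes \lambda) = (\mu x)\otimes\lambda$ and $(x\otimes\lambda)\mu = x \otimes (\mu'\lambda)$. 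What the paper's route buys is brevity and generality (it places $\K(E)$ in the class of all Arens regular ideals-in-bidual with bounded approximate identity, which also covers $c_0(\N)$); what your route buys is a self-contained argument that avoids citing Young and Lai and makes the predual pairing completely explicit. Note that both routes silently use that $\K(E)$ has a bounded approximate identity, which for reflexive $E$ with AP follows since $E$ (and $E'$) then have BAP, as recalled in the proof of Theorem \ref{1.3}.

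One step in your argument needs repair as stated: the claim that weak*-weak continuity ``passes to the norm-limit'' from finite-rank $a$ to compact $a$ because the maps $\mu \mapsto \mu a$ are uniformly bounded. The naive net argument fails, because a weak*-convergent \emph{net} in $\B(E)$ need not be bounded, so you cannot estimate $\Vert \mu_\alpha (a - a_k)\Vert \leq \Vert \mu_\alpha \Vert \, \Vert a - a_k \Vert$ uniformly in $\alpha$. The standard fix: a linear map $\Phi \colon \B(E) \rightarrow \K(E)$ is weak*-weak continuous if and only if, for every $u \in \K(E)'$, the functional $\mu \mapsto \langle \Phi(\mu), u \rangle$ is weak*-continuous, i.e.\ belongs to the predual $E \widehat{\otimes} E'$ regarded as a subspace of $\B(E)'$. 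Your rank-one computation exhibits this functional explicitly as $x \otimes \eta_u$ for a suitable $\eta_u \in E'$ (so the rank-one case needs no boundedness at all), linearity handles finite rank, and for general compact $a = \lim_k a_k$ the functionals $u \circ \Phi_{a_k}$ converge in norm to $u \circ \Phi_a$; since $\B(E) = (E \widehat{\otimes} E')'$, the canonical embedding of $E \widehat{\otimes} E'$ into $\B(E)'$ is isometric, so the predual is norm-closed there and the limit functional is again weak*-continuous. (Alternatively, the Krein--\v{S}mulian theorem lets you test weak*-continuity of each composite functional on bounded nets only, after which your uniform-boundedness estimate is legitimate.) With that step repaired, your proof is complete.
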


\begin{proof}
  By \cite[Theorem 3]{Young} $\K(E)$ is Arens regular. Moreover $\K(E)'' = \B(E)$, so that we see that $\K(E)$ is an ideal in its bidual. Hence the result follows from
  the previous lemma.
\end{proof}

The following lemma is also useful for finding examples.

\begin{lemma}		\label{5.2}
Let $A$ be as in Theorem \ref{5.1}, and suppose that the identification of that theorem yields $X = A \cdot A' = A' \cdot A$. Then $A$ is a compliant Banach algebra.
\end{lemma}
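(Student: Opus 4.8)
The goal is to verify Definition \ref{2.0.6}: given $A$ as in Theorem \ref{5.1} with the identification $X = A \cdot A' = A' \cdot A$, I must check that $A$ is faithful, that $M(A)$ is a dual Banach algebra, and that for each $a \in A$ the maps $\mu \mapsto \mu a$ and $\mu \mapsto a\mu$ from $M(A)$ to $A$ are weak*-weakly continuous. The second point holds by hypothesis, and the first is immediate since $A$ has a bounded approximate identity, which forces faithfulness. So the heart of the matter is the continuity condition, and by symmetry it suffices to treat one of the two maps, say $\mu \mapsto a\mu$.

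The plan is to verify weak*-weak continuity directly from the dual pairing, using Equations \eqref{eq5.1} and \eqref{eq5.2}. Fix $a \in A$. To show $\mu \mapsto a\mu$ is weak*-weakly continuous from $M(A)$ to $A$, I need, for each $f \in A'$, that the functional $\mu \mapsto \langle a\mu, f \rangle_{(A, \, A')}$ is weak*-continuous on $M(A)$; equivalently, that it is given by pairing $\mu$ against some element of the predual $X$. Under our hypothesis $X = A' \cdot A$, so I would write $f \cdot a$ — wait, I should be careful about which module action produces an element of $X$ and which equation to invoke. Since $X = A \cdot A' = A' \cdot A$, every element $a \cdot f$ and $f \cdot a$ already lies in $X$, which is exactly what lets me feed it into \eqref{eq5.1}/\eqref{eq5.2} without a side condition.

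First I would observe that, by \eqref{eq5.1}, for all $\mu \in M(A)$ and the element $f \cdot a \in X$,
$$\langle a\mu, f \rangle_{(A, \, A')} = \langle f, a\mu \rangle_{(A', \, A)} = \langle f \cdot a, \mu \rangle_{(X, \, M(A))}.$$
Thus the functional $\mu \mapsto \langle a\mu, f\rangle$ coincides with evaluation against the fixed predual element $f \cdot a \in X$, hence is weak*-continuous. Since $f \in A'$ was arbitrary, $\mu \mapsto a\mu$ is weak*-weakly continuous. Symmetrically, using \eqref{eq5.2} with the element $a \cdot f \in X$ gives
$$\langle \mu a, f \rangle_{(A, \, A')} = \langle f, \mu a \rangle_{(A', \, A)} = \langle a \cdot f, \mu \rangle_{(X, \, M(A))},$$
so $\mu \mapsto \mu a$ is weak*-weakly continuous as well. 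Finally I would note that faithfulness of $A$ follows from the existence of a bounded approximate identity, so all the conditions of Definition \ref{2.0.6} are met.

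The only genuine subtlety — and the step I would flag as the main point to get right — is that the hypothesis $X = A \cdot A' = A' \cdot A$ is precisely what removes the awkward qualifier ``for all $a \in A$ and $f \in A'$ with $f \cdot a \in X$'' attached to \eqref{eq5.1} and \eqref{eq5.2} in Theorem \ref{5.1}: here every such $f \cdot a$ and $a \cdot f$ automatically lies in $X$, so the identities are available unconditionally and the predual element witnessing weak*-continuity is guaranteed to exist. Everything else is a routine unwinding of the pairings, so no serious obstacle remains.
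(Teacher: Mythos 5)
Your proof is correct and is essentially the paper's own argument: the paper shows weak*-weak continuity of $\mu \mapsto a\mu$ by taking a weak*-convergent net and applying \eqref{eq5.1} with the predual element $f \cdot a$, which is exactly your observation that $\mu \mapsto \langle a\mu, f\rangle$ is evaluation against $f \cdot a \in X$, the hypothesis $X = A \cdot A' = A' \cdot A$ guaranteeing this element lies in $X$. Your explicit remarks on faithfulness (via the bounded approximate identity) and the symmetric case via \eqref{eq5.2} are points the paper treats implicitly or calls analogous, so nothing of substance differs.
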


\begin{proof}
Let $(\mu_\alpha)$ be a net in $M(A)$ which converges to some $\mu \in M(A)$ in the weak*-topology. Fix $a \in A$, and let $f \in A'$ be arbitrary. Then
$$\lim_\alpha \langle f, a\mu_\alpha \rangle_{(A', \, A)} 
= \lim_\alpha \langle f \cdot a, \mu_\alpha \rangle_{(X, \, M(A))} 
= \langle f \cdot a, \mu \rangle_{(X, \, M(A))} 
=\langle f, a\mu \rangle_{(A', \, A)}.$$
This shows that the map $\mu \mapsto a\mu$ is weak*-weakly continuous, and by an analogous argument so is the map $\mu \mapsto \mu a$. As $a$ was arbitrary this proves the lemma.
\end{proof}

Compliance is a fairly restrictive condition, as the next Proposition illustrates.

\begin{proposition} \label{2.0.8}
	\begin{enumerate}
		\item[{\rm (i)}] Let $A$ be a compliant Banach algebra, and let $\Box$ denote the first Arens product on $A''$. Then $A$ is an ideal in $(A'', \, \Box)$.
		\item[{\rm (ii)}] Let $K$ be a locally compact space. The Banach algebra $C_0(K)$ is compliant if and only if $K$ is discrete.
		\item[{\rm (iii)}] Let $G$ be a locally compact group. The Banach algebra $L^{\, 1}(G)$ is compliant if and only if $G$ is compact.
	\end{enumerate}
\end{proposition}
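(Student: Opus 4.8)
The plan is to treat the three parts in order, since the general structure theory in part (i) feeds into the concrete computations in (ii) and (iii). For part (i), the goal is to show that $A$ is an ideal in $(A'', \Box)$. The key observation is that compliance gives us weak*-weak continuity of the maps $\mu \mapsto \mu a$ and $\mu \mapsto a\mu$ on $M(A)$, while the canonical embedding $A \hookrightarrow M(A)$ (which is injective by faithfulness) interacts with the first Arens product in a controlled way. First I would recall that for $\Phi \in A''$ and $a \in A$, the Arens product $\Phi \Box \kappa(a)$ and $\kappa(a) \Box \Phi$ (where $\kappa$ denotes the canonical embedding into the bidual) can be computed by taking weak*-limits of nets in $A$ converging to $\Phi$. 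The plan is to approximate $\Phi$ weak* by a bounded net $(a_\alpha)$ in $A$, view each $a_\alpha$ inside $M(A)$, and use the weak*-weak continuity of multiplication by the fixed element $a$ to conclude that $a_\alpha a \to \Phi \Box a$ weakly, with the limit landing inside $A$ rather than merely in $A''$. The subtlety is matching up the dual pairing $(A', A)$ appearing in the definition of the Arens product with the module actions on $M(A)$; here Theorem \ref{5.1}, and in particular Equations \eqref{eq5.1}--\eqref{eq5.3}, should allow me to identify the relevant functionals and verify that the weak limit genuinely lies in $A$.

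For part (ii), the forward direction (discrete $\Rightarrow$ compliant) should follow from Lemma \ref{2.0.4a} or Lemma \ref{5.2}, since for discrete $K$ the algebra $C_0(K) = c_0(K)$ is Arens regular and an ideal in its bidual $\ell^\infty(K)$, which is precisely the multiplier algebra; indeed the earlier remark already notes that $c_0(\N)$ is compliant, and the same argument applies to any discrete $K$. For the converse I would argue contrapositively: if $K$ is not discrete, then $K$ has a non-isolated point, and $C_0(K)$ is not an ideal in its bidual. By part (i), failure of the ideal property in $(C_0(K)'', \Box)$ obstructs compliance. Concretely, $C_0(K)'' \cong C(\widehat{K})$ for a suitable hyperstonean-type compactification, and one exhibits an element of the bidual whose Arens product with a fixed $a \in C_0(K)$ escapes $C_0(K)$ precisely when $K$ has a non-isolated point; this is a standard fact about when $C_0(K)$ is an ideal in its bidual.

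For part (iii), the forward direction (compact $\Rightarrow$ compliant) follows because for compact $G$ the algebra $L^{\,1}(G)$ has a bounded approximate identity, is Arens regular, and is an ideal in its bidual, so Lemma \ref{2.0.4a} applies directly. For the converse I again use part (i): if $L^{\,1}(G)$ is compliant, it must be an ideal in $(L^{\,1}(G)'', \Box)$, and it is a known result (due to Watanabe, and to Lau--Losert) that $L^{\,1}(G)$ is an ideal in its bidual if and only if $G$ is compact. I would cite this characterization and conclude. The main obstacle I anticipate is part (i): carefully verifying that the weak*-weak continuity hypothesis of compliance translates into the ideal property for the \emph{first} Arens product requires pinning down exactly how the embedding $A \subseteq M(A) = A''$ (in the Arens-regular guise) matches the abstract multiplier action, and the non-regular or non-reflexive cases may force me to work directly with iterated weak*-limits rather than appealing to a clean duality. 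Once part (i) is secured, parts (ii) and (iii) reduce to invoking the classical ideal-in-bidual characterizations, so the real work is concentrated in establishing the general ideal property from compliance.
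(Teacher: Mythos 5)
Your proposal breaks down at the forward direction of (iii). You assert that for compact $G$ the algebra $L^{\,1}(G)$ is Arens regular and then apply Lemma \ref{2.0.4a}; but by a classical theorem of N.~J.~Young (\emph{Quart.\ J.\ Math.\ Oxford} \textbf{24} (1973), 59--62) the group algebra of an \emph{infinite} locally compact group is never Arens regular, so Lemma \ref{2.0.4a} is unavailable for every infinite compact $G$. Indeed its conclusion would be false here: $M(L^{\,1}(G)) = M(G)$ carries its weak*-topology from the predual $C(G)$, not from $L^{\,1}(G)'$, and $M(G) \neq L^{\,1}(G)''$ when $G$ is infinite, so the bidual picture you rely on is not the relevant one. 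Note also that part (i) only makes the ideal-in-bidual property \emph{necessary} for compliance; your outline supplies no \emph{sufficient} criterion applicable to $L^{\,1}(G)$. The paper closes exactly this gap differently: by inspecting the maps in \cite[Theorem 7.9]{Da} it identifies the predual of $M(G)$ furnished by Theorem \ref{5.1} with the usual copy of $C(G)$ inside $L^{\,\infty}(G)$, observes that $C(G) = L^{\,1}(G)\cdot L^{\,\infty}(G) = L^{\,\infty}(G)\cdot L^{\,1}(G)$ by \cite[Proposition 2.39(d)]{F}, and then invokes Lemma \ref{5.2}. Some argument of this factorisation type is needed, and it is the one genuinely missing idea in your proposal.

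There is also a smaller defect in (i): Theorem \ref{5.1} assumes a bounded approximate identity, which Definition \ref{2.0.6} does not grant, so your appeal to Equations \eqref{eq5.1}--\eqref{eq5.3} is not licensed in the stated generality. This is repairable: given $\Phi \in A''$, take a bounded net in $A$ converging weak* to $\Phi$ (Goldstine), pass to a subnet converging weak* in $M(A)$ (Banach--Alaoglu), and use compliance together with the weak*-continuity of $\Psi \mapsto a \Box \Psi$ and $\Phi \mapsto \Phi \Box a$ for fixed $a \in A$ to match the limits. The paper's route is shorter and avoids nets altogether: compliance makes $L_a$ and $R_a$ weakly compact, since they factor through the weak*-compact unit ball of $M(A)$, whence $L_a''(A''),\, R_a''(A'') \subset A$ by Gantmacher-type duality \cite[Theorem 3.5.8]{Meg}, i.e.\ $a \Box \Psi,\ \Phi \Box a \in A$. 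Your treatment of (ii) is essentially sound: the discrete direction is as in the paper, and for the converse the paper simply makes your ``standard fact'' explicit by testing $\eps_x \colon \mu \mapsto \mu(\{x\})$ against $f \in C_0(K)$ to compute $f \Box \eps_x = f(x)\1_{\{x\}}$, which lies in $C_0(K)$ only if $x$ is isolated; and the converse of (iii) via part (i) and the ideal-in-bidual characterisation (the paper cites \cite{Gr}) is likewise fine.
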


\begin{proof}
(i) If $A$ is compliant then, for every $a \in A$, the maps given by  
$$L_a \colon b \mapsto ab \qquad \text{and} \qquad R_a \colon b \mapsto ba$$
are weakly compact. Hence $L_a''(A''), R_a''(A'') \subset A$ by \cite[Theorem 3.5.8]{Meg}.
 Given $a \in A$ we have $L_a''  \colon \Psi \mapsto a \Box \Psi \ (\Psi \in A''),$  and we see that $A$ is a right ideal in $A''$. Similarly it is a left ideal.
 
(ii) Whenever $K$ is discrete $C_0(K)$ is Arens regular with bidual given by $\ell^{\, \infty}(K)$. Hence the algebra is compliant by Lemma \ref{5.2}. 

Now suppose instead that $C_0(K)$ is compliant. The dual space of $C_0(K)$ may be identified with $M(K)$, and, as such, for each $x \in K$, we may define an element $\eps_x \in C_0(K)''$ by 
$$\eps_x \colon \mu \mapsto \int_{\{x \}} 1 \dd \mu \quad (\mu \in M(K)).$$ 
By part (i) we know that, for any $x \in K$ and any $f \in C_0(K)$, we have $\eps_x \Box f \in C_0(K)$. We then calculate that, for $\mu \in M(K)$, we have 
$$\langle f \Box \eps_x, \mu \rangle = \langle \eps_x, \mu \cdot f \rangle = \int_{\{ x \}} f \dd \mu = \mu(\{ x \})f(x)$$
so that $\eps_x \Box f$ is equal to $f(x)$ at $x$, and $0$ everywhere else. Therefore, given any $x \in K$, by choosing any $f \in C_0(K)$ not vanishing at $x$, we see that the point mass at $x$ is continuous. Hence $K$ is discrete.

(iii) If $L^{\, 1}(G)$ is compliant, then, by part (i) and \cite{Gr}, $G$ is compact.  Suppose instead that $G$ is compact. By examining the maps in \cite[Theorem 7.9]{Da} we see that the identification in Theorem \ref{5.1} is the usual inclusion $C(G) \rightarrow L^{\infty}(G)$. Then $C(G) = L^{\, 1}(G) \cdot L^{\, \infty}(G) = 
L^{\, \infty}(G) \cdot L^{\, 1}(G)$ by \cite[Proposition 2.39(d)]{F}. Hence, by Lemma \ref{5.2}, $L^{\, 1}(G)$ is compliant.
\end{proof}

For compliant Banach algebras there is a bijective correspondence between the closed left ideals of $A$ and the weak*-closed left ideals of $M(A)$ as we describe below in Theorem \ref{2.0.11b}. The next section will be devoted to applications of this result.

\begin{lemma} \label{2.0.11a}
Let $I$ be a closed left ideal of a compliant Banach algebra $A$, and let $\mu \in \overline{I}^{w^*} \subset M(A)$. Then $A \mu \subset I$.
\end{lemma}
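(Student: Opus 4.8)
The plan is to reduce the statement to a single element: it suffices to fix an arbitrary $a \in A$ and show that $a\mu \in I$. Since $\mu \in \overline{I}^{w^*}$, I would first choose a net $(b_\alpha)$ in $I$ converging to $\mu$ in the weak*-topology of $M(A)$. Here I am using that $A$ is faithful, so that $A$ embeds in $M(A)$ and the inclusion $I \subset A \subset M(A)$ makes sense, and that the weak*-closure is computed inside $M(A)$ with respect to its predual $X$.

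The key step is to invoke compliance directly. By Definition \ref{2.0.6}, for the fixed element $a$ the map $M(A) \rightarrow A$, $\nu \mapsto a\nu$, is weak*-weakly continuous. Applying this to the net $(b_\alpha)$, which converges weak* to $\mu$, yields that $ab_\alpha \rightarrow a\mu$ weakly in $A$. Since the inclusion $A \hookrightarrow M(A)$ is an algebra homomorphism, the product $ab_\alpha$ computed in $M(A)$ coincides with the product in $A$; in particular $ab_\alpha \in I$ for every $\alpha$, because $b_\alpha \in I$ and $I$ is a left ideal of $A$.

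To conclude, I would use that $I$, being a norm-closed linear subspace of the Banach space $A$, is automatically weakly closed (a norm-closed convex set is weakly closed by Mazur's theorem). Since each $ab_\alpha$ lies in $I$ and $ab_\alpha \rightarrow a\mu$ weakly, it follows that $a\mu \in I$. As $a \in A$ was arbitrary, this gives $A\mu \subset I$, as required.

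I expect no serious obstacle here: the content of the lemma is exactly a repackaging of the weak*-weak continuity guaranteed by compliance. The only points requiring a moment's care are the identification of the $M(A)$-product $ab_\alpha$ with the $A$-product (so that membership in the ideal $I$ is legitimate) and the passage from weak* convergence in $M(A)$ to weak convergence in $A$, both of which are handled by the definition of compliance and the injectivity coming from faithfulness.
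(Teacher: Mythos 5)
Your proposal is correct and is essentially identical to the paper's own proof: a net in $I$ converging weak* to $\mu$, weak*-weak continuity of $\nu \mapsto a\nu$ from compliance, and the weak closedness of the norm-closed convex set $I$ (the paper writes $\overline{I}^{w} = I$, which is exactly your appeal to Mazur's theorem). The two points you flag for care are handled the same way, so there is nothing to add.
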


\begin{proof}
Let $(\mu_\alpha)$ be  a net in $I$ converging to $\mu$ in the weak*-topology and let $a \in A$. For each index $\alpha$ we have $a \mu_\alpha \in I$. Since $A$ is compliant, the net $a \mu_\alpha$ converges weakly to $a \mu$ in $A$. Hence $a \mu \in \overline{I}^w = I$. As $a$ was arbitrary, the result follows.
\end{proof}

\begin{theorem} \label{2.0.11b}
Let $A$ be a compliant Banach algebra with a bounded approximate identity. The map
$$I \mapsto \overline{I}^{w^*},$$
defines a bijective correspondence between closed left ideals in $A$ and weak*-closed left ideals in $M(A)$. The inverse is given by
$$J \mapsto A \cap J,$$
for $J$ a weak*-closed left ideal in $M(A)$.
\end{theorem}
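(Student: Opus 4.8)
The plan is to show that the two maps $I \mapsto \overline{I}^{w^*}$ and $J \mapsto A \cap J$ are well-defined and mutually inverse, which---since both send closed left ideals to closed left ideals and preserve inclusions---will establish the claimed bijective correspondence. First I would check well-definedness. If $I$ is a closed left ideal of $A$, then $\overline{I}^{w^*}$ is certainly a weak*-closed subspace, and it is a \emph{left} ideal of $M(A)$ because, by Lemma \ref{2.0.11a}, $A\mu \subset I$ for every $\mu \in \overline{I}^{w^*}$; combined with the weak*-density of $A$ in $M(A)$ (Lemma \ref{2.0.5}, applicable since $A$ has a bounded approximate identity) and separate weak*-continuity of multiplication in the dual Banach algebra $M(A)$, this upgrades to $M(A)\,\overline{I}^{w^*} \subset \overline{I}^{w^*}$. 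Conversely, for $J$ a weak*-closed left ideal of $M(A)$, the set $A \cap J$ is plainly a closed left ideal of $A$ (using that $A$ is identified with a subalgebra of $M(A)$).

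Next I would verify $A \cap \overline{I}^{w^*} = I$ for every closed left ideal $I$ of $A$. The inclusion $\supset$ is immediate. For $\subset$, take $a \in A \cap \overline{I}^{w^*}$; by Lemma \ref{2.0.11a} we have $Aa \subset I$, and since $A$ has a bounded approximate identity $(e_\beta)$ with $e_\beta a \to a$ in norm, we get $a = \lim_\beta e_\beta a \in \overline{I} = I$. For the other composite, I must show $\overline{A \cap J}^{w^*} = J$ for every weak*-closed left ideal $J$ of $M(A)$. The inclusion $\subset$ holds since $A \cap J \subset J$ and $J$ is weak*-closed. The reverse inclusion $J \subset \overline{A \cap J}^{w^*}$ is the substantive point: I would invoke Lemma \ref{2.1.1} applied to the semi-topological algebra $(M(A), w^*)$, whose dense right ideal $A$ (dense by Lemma \ref{2.0.5}) then intersects the weak*-closed left ideal $J$ weak*-densely. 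This gives exactly $\overline{A \cap J}^{w^*} = J$.

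The main obstacle is the verification that $\overline{I}^{w^*}$ is genuinely a left ideal of all of $M(A)$ rather than merely closed under multiplication by $A$. Lemma \ref{2.0.11a} supplies $A\mu \subset I \subset \overline{I}^{w^*}$; to pass from $A$ to $M(A)$ I fix $\mu \in \overline{I}^{w^*}$ and $\nu \in M(A)$, approximate $\nu$ by a net $(a_\gamma)$ in $A$ in the weak*-topology (Lemma \ref{2.0.5}), and use separate weak*-continuity of the product to conclude $\nu\mu = \lim_\gamma a_\gamma \mu$, which lies in $\overline{I}^{w^*}$ since each $a_\gamma \mu \in \overline{I}^{w^*}$ and the latter is weak*-closed. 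One subtlety to handle carefully is that Lemma \ref{2.1.1} is stated for a left approximate identity producing dense intersection of a dense \emph{right} ideal with closed \emph{left} ideals, so I should confirm that $M(A)$, being unital, possesses a (trivial) left approximate identity, and that $A$ is indeed a right ideal of $M(A)$---both of which hold by the definition of the multiplier algebra and Lemma \ref{2.0.5}. Once these compatibility checks are in place the argument closes cleanly.
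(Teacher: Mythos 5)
Your proposal is correct and follows essentially the same route as the paper: the equality $A \cap \overline{I}^{w^*} = I$ via Lemma \ref{2.0.11a} together with the bounded approximate identity, and $\overline{A \cap J}^{w^*} = J$ via Lemma \ref{2.1.1} applied to the unital semi-topological algebra $(M(A), w^*)$ with $A$ weak*-dense by Lemma \ref{2.0.5}. Your explicit verification that $\overline{I}^{w^*}$ is a left ideal of all of $M(A)$ (using separate weak*-continuity of multiplication) is a detail the paper leaves implicit, and it is handled correctly.
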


\begin{proof}
First we take an arbitrary closed left ideal $I$ in $A$ and show that $A \cap \overline{I}^{w^*} = I$. Certainly $I \subset A \cap \overline{I}^{w^*}$. Let $a \in A \cap \overline{I}^{w^*}$. Then by Lemma \ref{2.0.11a} we have $Aa \subset I$. Since $A$ has a bounded approximate identity, this implies that $a \in I$. As $a$ was arbitrary, we must have $I = A \cap \overline{I}^{w^*}$.

It remains to show that, given a weak*-closed left ideal $J$ of $M(A)$, we have $\overline{A \cap J}^{w^*} = J$, and this follows from Lemma \ref{2.1.1} and Lemma \ref{2.0.5}.
\end{proof}

Finally we show that for compliant Banach algebras the converse of Proposition \ref{2.0.9} holds, so that weak*-topological left Noetherianity of $M(A)$ can be characterised in terms of a $\Vert \cdot \Vert$-topological condition on $A$.

\begin{proposition} \label{2.0.11}
Let $A$ be a compliant Banach algebra with a bounded approximate identity. Then $M(A)$ is weak*-top\-o\-l\-og\-i\-ca\-lly left Noetherian if and only if every closed left ideal $I$ in $A$ has the  form 
$$I = \overline{A\mu_1+\cdots + A\mu_n},$$
 for some $n \in \N$, and some $\mu_1, \ldots, \mu_n \in M(A)$.
\end{proposition}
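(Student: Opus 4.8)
The plan is to prove both directions of the equivalence using the bijective correspondence established in Theorem \ref{2.0.11b}, which already does the heavy lifting of translating between the closed left ideals of $A$ and the weak*-closed left ideals of $M(A)$.

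First I would prove the forward direction, which is essentially a specialisation of Proposition \ref{2.0.9}. Suppose $M(A)$ is weak*-topologically left Noetherian, and let $I$ be a closed left ideal of $A$. By Theorem \ref{2.0.11b}, $\overline{I}^{w^*}$ is a weak*-closed left ideal of $M(A)$, so by hypothesis there exist $n \in \N$ and $\mu_1, \ldots, \mu_n \in M(A)$ such that $\overline{I}^{w^*} = \overline{M(A)^\sharp \mu_1 + \cdots + M(A)^\sharp \mu_n}^{w^*}$. Since $A$ has a bounded approximate identity, hence a left approximate identity, the unitisations can be dropped and we may write $\overline{I}^{w^*} = \overline{M(A)\mu_1 + \cdots + M(A)\mu_n}^{w^*}$. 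Now I would intersect with $A$ and invoke the correspondence together with Lemma \ref{2.1.1}: since $A$ is weak*-dense in $M(A)$ (Lemma \ref{2.0.5}) and $M(A)$ is unital, $A$ meets the weak*-closed left ideal $\overline{I}^{w^*}$ weak*-densely, and the inverse part of Theorem \ref{2.0.11b} gives $I = A \cap \overline{I}^{w^*}$. The remaining task is to show that this recovered $I$ equals $\overline{A\mu_1 + \cdots + A\mu_n}$ in norm. The containment $\overline{A\mu_1 + \cdots + A\mu_n} \subset I$ is clear once one checks each $\mu_i$ can be taken inside $\overline{I}^{w^*}$ so that $A\mu_i \subset A \cap \overline{I}^{w^*} = I$ by Lemma \ref{2.0.11a}; the reverse containment follows because $\overline{A\mu_1 + \cdots + A\mu_n}$ has the same weak*-closure as $\overline{M(A)\mu_1 + \cdots + M(A)\mu_n}$ (using weak*-density of $A$ and separate weak*-continuity of multiplication), and intersecting back with $A$ via Theorem \ref{2.0.11b} collapses both to the same norm-closed left ideal.

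For the converse, suppose every closed left ideal $I$ of $A$ has the form $\overline{A\mu_1 + \cdots + A\mu_n}$ for finitely many multipliers. This is exactly the hypothesis of Proposition \ref{2.0.9} (the condition there is stated with unitisations $A^\sharp$, but these are redundant given the bounded approximate identity), and since $A$ is weak*-dense in $M(A)$ by Lemma \ref{2.0.5}, Proposition \ref{2.0.9} immediately yields that $M(A)$ is weak*-topologically left Noetherian.

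The main obstacle I anticipate is the bookkeeping in the forward direction: one must be careful that the finitely many generators $\mu_1, \ldots, \mu_n$ of $\overline{I}^{w^*}$ can be arranged so that $A\mu_i \subset I$, and that the norm-closed module $\overline{A\mu_1 + \cdots + A\mu_n}$ genuinely coincides with $A \cap \overline{I}^{w^*}$ rather than merely being weak*-dense in it. The key leverage is that intersecting a weak*-closed left ideal of $M(A)$ with $A$ is an exact inverse to weak*-closure (Theorem \ref{2.0.11b}), so passing from the weak*-picture back to the norm-picture is controlled; combined with Lemma \ref{2.0.11a}, which guarantees $A\mu \subset I$ whenever $\mu \in \overline{I}^{w^*}$, the two norm-closed left ideals are squeezed together and must agree.
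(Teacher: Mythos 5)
Your proposal is correct and follows essentially the same route as the paper's proof: the ``if'' direction via Proposition \ref{2.0.9} and Lemma \ref{2.0.5}, and the ``only if'' direction by writing $\overline{I}^{w^*} = \overline{A\mu_1+\cdots+A\mu_n}^{w^*}$ (weak*-density of $A$ plus separate weak*-continuity) and then applying Theorem \ref{2.0.11b} twice to collapse back to $I = \overline{A\mu_1+\cdots+A\mu_n}$. Your explicit appeal to Lemma \ref{2.0.11a} for the containment $A\mu_i \subset I$ is harmless but redundant, since the second application of Theorem \ref{2.0.11b} already yields both inclusions at once.
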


\begin{proof}
The ``if'' direction follows from Proposition \ref{2.0.9} and Lemma \ref{2.0.5}. Conversely, suppose that $M(A)$ is weak*-topologically left Noetherian, and let $I$ be a closed left ideal of $A$. Then there exist $n \in \N$ and $\mu_1, \ldots, \mu_n \in M(A)$ such that 
$$\overline{I}^{w^*} = \overline{M(A)\mu_1+\cdots+M(A)\mu_n}^{w^*} = \overline{A\mu_1+\cdots+A\mu_n}^{w^*},$$
where we have used Lemma \ref{2.0.5} to get the second equality. Hence, by applying Theorem \ref{2.0.11b} twice, we obtain
$$I = \overline{I}^{w^*} \cap A = \overline{A\mu_1+\cdots+A\mu_n}^{w^*} \cap A =  \overline{A\mu_1+\cdots+A\mu_n}.$$
The result follows.
\end{proof}

\section{Some Classification Results}
\noindent
In this section we use Theorem \ref{2.0.11b} to give classifications of the weak*-closed left ideals of $M(G)$, for $G$ a compact group, and of the weak*-closed left ideals of $\B(E)$, for $E$ a reflexive Banach space with the approximation property. We then observe how this gives us some classification results for the closed right submodules of the preduals.

Let $G$ be a compact group and suppose the for each  $\pi \in \widehat{G}$ we have chosen a linear subspace $E_\pi \leq H_\pi$. Then we define
$$J[(E_\pi)_{\pi \in \widehat{G}}] := \left\{\mu \in M(G) : \pi(\mu)(E_\pi) = 0, \, \pi \in \widehat{G} \right\}.$$
We shall show that these are exactly the  weak*-closed left ideals of $M(G)$.

\begin{lemma}		\label{2.0.12}
Let $G$ be  a compact group and let $E_\pi \leq H_\pi \ (\pi \in \widehat{G})$. Then
	\begin{equation}		
	\overline{\spn} \left\{ \xi *_\pi \eta : \pi \in \widehat{G}, \, \xi \in E_\pi, \, \eta \in H_\pi \right\}^\perp
	= J[(E_\pi)_{\pi \in \widehat{G}}].
	\end{equation}
\end{lemma}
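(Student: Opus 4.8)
The plan is to unwind both sides to the single scalar condition $\langle \pi(\mu)\xi, \eta \rangle = 0$ and match them. Recall that for $G$ compact we have $M(G) = C(G)'$, with duality pairing $\langle f, \mu \rangle = \int_G f \dd\mu$, and that each matrix coefficient $\xi *_\pi \eta \colon t \mapsto \langle \pi(t)\xi, \eta \rangle$ is continuous on $G$, hence lies in $C(G)$. The engine of the proof is the identity
\[
\langle \xi *_\pi \eta, \mu \rangle = \int_G \langle \pi(t)\xi, \eta \rangle \dd\mu(t) = \langle \pi(\mu)\xi, \eta \rangle,
\]
which is the measure-theoretic version of the formula used in the proof of Theorem \ref{2.1.2b} (with $\mu$ in place of $f \dd t$); it holds because $\pi(\mu)$ is defined precisely by integrating $\pi$ against $\mu$.

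First I would observe that, since annihilators are closed subspaces and the annihilator of a set coincides with the annihilator of its (closed) linear span, the left-hand side is exactly the set of $\mu \in M(G)$ for which $\langle \xi *_\pi \eta, \mu \rangle = 0$ for every $\pi \in \widehat{G}$, every $\xi \in E_\pi$, and every $\eta \in H_\pi$. Thus the closed span and the outer $\overline{\spn}$ play no essential role, and the problem reduces to the behaviour of $\mu$ on the generators $\xi *_\pi \eta$ themselves.

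Next, feeding the key identity into this condition, it becomes $\langle \pi(\mu)\xi, \eta \rangle = 0$ for all such $\pi, \xi, \eta$. Fixing $\pi$ and $\xi \in E_\pi$ and letting $\eta$ range over the whole of $H_\pi$, this is equivalent to $\pi(\mu)\xi = 0$; letting $\xi$ range over $E_\pi$ gives $\pi(\mu)(E_\pi) = 0$. Quantifying over $\pi \in \widehat{G}$ then reproduces precisely the defining condition of $J[(E_\pi)_{\pi \in \widehat{G}}]$, which completes the identification.

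I expect no serious obstacle: once the pairing formula is in hand the argument is essentially bookkeeping. The only two points needing a moment's care are justifying the extension of the $L^{\,1}$-formula of Theorem \ref{2.1.2b} to a general measure $\mu \in M(G)$, and noting that one may pass from ``$\langle \pi(\mu)\xi, \eta \rangle = 0$ for all $\eta$'' to ``$\pi(\mu)\xi = 0$'' because $\eta$ ranges over all of $H_\pi$, not merely a proper subspace.
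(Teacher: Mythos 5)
Your proof is correct and is precisely the ``routine calculation'' that the paper leaves unwritten: the pairing identity $\langle \xi *_\pi \eta, \mu \rangle = \langle \pi(\mu)\xi, \eta \rangle$, which holds by the very definition of $\pi(\mu)$ for $\mu \in M(G) = C(G)'$, together with the fact that an annihilator sees only the generators of a closed span. Both points you flag for care (extending the $L^{\,1}$-formula to measures, and letting $\eta$ range over all of $H_\pi$ to conclude $\pi(\mu)\xi = 0$) are handled correctly, so nothing is missing.
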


\begin{proof}
Routine calculation.
\end{proof}

\begin{theorem}		\label{2.0.13}
Let $G$ be a compact group. Then the  weak*-closed left ideals of $M(G)$ are given by $J[(E_\pi)_{\pi \in \widehat{G}}]$, as $(E_\pi)_{\pi \in \widehat{G}}$ runs over the  possible choices of linear subspaces $E_\pi \leq H_\pi \ (\pi \in \widehat{G})$. 
%Moreover, distinct choices of the subspaces $(E_\pi)_{\pi \in \widehat{G}}$ yield distinct ideals $J[(E_\pi)_{\pi \in \widehat{G}}]$.
\end{theorem}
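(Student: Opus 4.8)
The plan is to obtain the classification directly from the abstract correspondence of Theorem \ref{2.0.11b} combined with the known description of the closed left ideals of $L^{\, 1}(G)$ from Theorem \ref{2.1.2a}. First I would record the standing facts that make the machinery available: for a compact group $G$ the algebra $L^{\, 1}(G)$ has a bounded approximate identity, its multiplier algebra is $M(L^{\, 1}(G)) = M(G)$, and, by Proposition \ref{2.0.8}(iii), $L^{\, 1}(G)$ is compliant. Hence Theorem \ref{2.0.11b} applies with $A = L^{\, 1}(G)$: the map $I \mapsto \overline{I}^{w^*}$ is a bijection from the closed left ideals of $L^{\, 1}(G)$ onto the weak*-closed left ideals of $M(G)$, with inverse $J \mapsto L^{\, 1}(G) \cap J$. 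The whole problem thus reduces to computing these weak*-closures explicitly.

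Accordingly, I would fix a family of subspaces $E_\pi \leq H_\pi \ (\pi \in \widehat{G})$ and let $I = \{ f \in L^{\, 1}(G) : \pi(f)(E_\pi) = 0, \ \pi \in \widehat{G} \}$, which by Theorem \ref{2.1.2a} is a typical closed left ideal of $L^{\, 1}(G)$, every such ideal arising this way. Two verifications are then needed. First, that $J[(E_\pi)_{\pi \in \widehat{G}}]$ is a weak*-closed left ideal of $M(G)$: it is a left ideal because $\pi$ is an algebra homomorphism on $M(G)$, so that $\pi(\nu * \mu)(E_\pi) = \pi(\nu)\pi(\mu)(E_\pi) = 0$ whenever $\pi(\mu)(E_\pi) = 0$, and it is weak*-closed by Lemma \ref{2.0.12}, which exhibits it as the annihilator in $M(G)$ of a subset of the predual $C(G)$. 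Second, that $L^{\, 1}(G) \cap J[(E_\pi)_{\pi \in \widehat{G}}] = I$; this is immediate once one notes that for $f \in L^{\, 1}(G)$, regarded as an element of $M(G)$, the operator $\pi(f)$ defined via the measure-algebra representation agrees with the usual $\pi(f)$, so the defining conditions coincide.

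With these two facts in hand the theorem follows at once. Since $J[(E_\pi)_{\pi \in \widehat{G}}]$ is a weak*-closed left ideal whose intersection with $L^{\, 1}(G)$ is $I$, the inverse correspondence in Theorem \ref{2.0.11b} forces
$$\overline{I}^{w^*} = \overline{L^{\, 1}(G) \cap J[(E_\pi)_{\pi \in \widehat{G}}]}^{w^*} = J[(E_\pi)_{\pi \in \widehat{G}}].$$
Letting $(E_\pi)$ vary shows that every set $J[(E_\pi)_{\pi \in \widehat{G}}]$ is a weak*-closed left ideal of $M(G)$. Conversely, an arbitrary weak*-closed left ideal $J$ satisfies $J = \overline{L^{\, 1}(G) \cap J}^{w^*}$, and $L^{\, 1}(G) \cap J$ is a closed left ideal of $L^{\, 1}(G)$, hence equals some $I$ as above by Theorem \ref{2.1.2a}; the displayed identity then yields $J = J[(E_\pi)_{\pi \in \widehat{G}}]$. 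The conceptual content is carried entirely by Theorem \ref{2.0.11b}, so the only genuine work is the matching above. I expect the main (though still routine) obstacle to be the careful confirmation that $J[(E_\pi)_{\pi \in \widehat{G}}]$ is truly weak*-closed, for which Lemma \ref{2.0.12} is essential: without it one would be forced to argue directly about weak*-limits of the representation-valued maps $\mu \mapsto \pi(\mu)$, which is precisely what the annihilator description lets us avoid.
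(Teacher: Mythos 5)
Your proposal is correct and takes essentially the same approach as the paper's proof: weak*-closedness of $J[(E_\pi)_{\pi \in \widehat{G}}]$ via Lemma \ref{2.0.12}, the classification of closed left ideals of $L^{\, 1}(G)$ from Theorem \ref{2.1.2a}, compliance of $L^{\, 1}(G)$ from Proposition \ref{2.0.8}(iii), and the bijective correspondence of Theorem \ref{2.0.11b}. The only difference is that you spell out the routine verifications (the left-ideal check and the identity $L^{\, 1}(G) \cap J[(E_\pi)_{\pi \in \widehat{G}}] = I$) that the paper leaves implicit.
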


\begin{proof}
By Lemma \ref{2.0.12} each space $J[(E_\pi)_{\pi \in \widehat{G}}]$ is weak*-closed, and it is easily checked that it is a left ideal. Moreover, by Theorem \ref{2.1.2a} each closed left ideal of $L^{\, 1}(G)$ has the form 
 $L^{\, 1}(G) \cap J[(E_\pi)_{\pi \in \widehat{G}}]$, for some choice of subspaces 
 $E_\pi \leq H_\pi \ (\pi \in \widehat{G}).$
By Proposition \ref{2.0.8}, $L^{\, 1}(G)$ is a compliant Banach algebra, so we may apply Proposition \ref{2.0.11b} to see that this must be the full set of weak*-closed left ideals.
%By Proposition \ref{2.0.8}, $L^{\, 1}(G)$ is a compliant Banach  algebra, so, by Proposition \ref{2.0.11b}, there is a bijection $\Lambda$ from the  set of weak*-closed left ideals of $M(G)$ to the set of $\Vert \cdot \Vert$-closed left ideals of $L^{\, 1}(G)$ given by
%$$\Lambda \colon I \mapsto I \cap L^{\, 1}(G),$$
%for $I$ a weak*-closed left ideal in $M(G)$. 
%By Lemma \ref{2.0.12} each space $J[(E_\pi)_{\pi \in \widehat{G}}]$ is weak*-closed, and it is easily checked that it is a left ideal. Moreover, by Theorem \ref{2.1.2a}, each closed left ideal of $L^{\, 1}(G)$ has the form $L^{\, 1}(G) \cap J[(E_\pi)_{\pi \in \widehat{G}}]$, for some choice of subspaces $E_\pi \leq H_\pi \ (\pi \in \widehat{G}).$ Hence $\Lambda$ is surjective when restricted to the set
%$$\left\{ J[(E_\pi)_{\pi \in \widehat{G}}] : E_\pi \leq H_\pi, \, \pi \in \widehat{G} \right\}.$$
%Since $\Lambda$ is a bijection, it follows that this set must be the full set of weak*-closed left ideals of $M(G)$.
%Finally, it follows from Lemma \ref{2.2.1c} and the injectivity of $\Lambda$ that different choices of subspaces give different left ideals.
\end{proof}

Recall the definition of $\mathscr{I}_{\B(E)}(F)$ given in Equation \eqref{eq2.4.0c}.

\begin{theorem}		\label{2.5.5a}
Let $E$ be a reflexive Banach space with the approximation property. Then the weak*-closed left ideals of $\B(E)$ are exactly given by $\mathscr{I}_{\B(E)}(F)$, as $F$ runs through $\SUB(E)$. The weak*-closed right ideals are given by $\mathscr{R}_{\B(E)}(F)$, as $F$ runs through $\SUB(E)$.
\end{theorem}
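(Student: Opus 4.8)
The plan is to apply the general correspondence machinery of Theorem \ref{2.0.11b} together with the already-established classifications on the level of $\K(E)$. By Corollary \ref{2.0.4b}, $\K(E)$ is a compliant Banach algebra (here we use that $E$ is reflexive with AP), and since $E$ is reflexive with AP we have $M(\K(E)) = \B(E)$ with the latter a dual Banach algebra with predual $E \widehat{\otimes} E'$. Moreover $\K(E)$ has a bounded approximate identity because $E$ (hence $E'$, by reflexivity and \cite[Theorem 3.7]{Cas}) has BAP. Thus the hypotheses of Theorem \ref{2.0.11b} are met, and we obtain a bijective correspondence $I \mapsto \overline{I}^{w^*}$ between closed left ideals $I$ of $\K(E)$ and weak*-closed left ideals of $\B(E)$, with inverse $J \mapsto \K(E) \cap J$.

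First I would transport the classification of closed left ideals of $\K(E) = \A(E)$ from Theorem \ref{2.3.4}: every such ideal has the form $\mathscr{L}_{\K(E)}(F) = \{T \in \K(E) : \im T' \subset F\}$ for a unique $F \in \SUB(E')$. Since $E$ is reflexive, $E'' = E$, and the condition $\im T' \subset F$ can be rewritten via the closed subspace $F_\perp \subset E$; indeed for a reflexive space $\im T' \subset F$ is equivalent to $\ker T \supset F_\perp$, so $\mathscr{L}_{\K(E)}(F) = \mathscr{I}_{\K(E)}(F_\perp)$. As $F$ ranges over $\SUB(E')$, the annihilator $F_\perp$ ranges over all of $\SUB(E)$ (using \eqref{eq2.1} and reflexivity to see the annihilator maps are mutually inverse order-reversing bijections). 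Hence the closed left ideals of $\K(E)$ are exactly the $\mathscr{I}_{\K(E)}(D)$ for $D \in \SUB(E)$.

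Next I would compute the weak*-closure. The main content is to verify that $\overline{\mathscr{I}_{\K(E)}(D)}^{w^*} = \mathscr{I}_{\B(E)}(D)$ for each $D \in \SUB(E)$, where $\mathscr{I}_{\B(E)}(D) = \{T \in \B(E) : \ker T \supset D\}$ as in \eqref{eq2.4.0c}. The inclusion $\subset$ follows from weak*-closedness of $\mathscr{I}_{\B(E)}(D)$: the set $\{T : Tx = 0\}$ is weak*-closed for each fixed $x \in D$ since $T \mapsto Tx$ is weak*-to-weak continuous (this is precisely the compliance condition $M(\K(E)) \to \K(E)$, or directly the separate weak*-continuity of multiplication in the dual Banach algebra $\B(E)$ against functionals of the form $x \otimes \lambda$), and $\mathscr{I}_{\B(E)}(D)$ is the intersection over $x \in D$. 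Conversely, given $T \in \mathscr{I}_{\B(E)}(D)$, I would approximate $T$ weak* by finite-rank (hence compact) operators vanishing on $D$: using AP one obtains a net $(P_\alpha) \subset \mathcal{F}(E)$ with $P_\alpha \to \id_E$ uniformly on compact sets, and on a reflexive space with AP the net may be taken so that $T P_\alpha \to T$ weak* in $\B(E)$ with each $T P_\alpha \in \mathscr{I}_{\K(E)}(D)$; I would check the required vanishing $T P_\alpha|_D$ using that elements of $\K(E) \cap \mathscr{I}_{\B(E)}(D) = \mathscr{I}_{\K(E)}(D)$ are weak*-dense in $\mathscr{I}_{\B(E)}(D)$ via Lemma \ref{2.1.1} and Lemma \ref{2.0.5}, which is in fact the cleanest route since $\K(E)$ is weak*-dense in $\B(E)$ and $\mathscr{I}_{\B(E)}(D)$ is a weak*-closed left ideal. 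Combining the correspondence of Theorem \ref{2.0.11b} with the identifications above then yields that the weak*-closed left ideals of $\B(E)$ are exactly the $\mathscr{I}_{\B(E)}(F)$, $F \in \SUB(E)$.

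The hard part will be confirming that each $\mathscr{I}_{\B(E)}(F)$ is genuinely a weak*-closed left ideal and that the family exhausts all of them, i.e.\ pinning down the weak*-closure formula $\overline{\mathscr{I}_{\K(E)}(F)}^{w^*} = \mathscr{I}_{\B(E)}(F)$ rather than merely an inclusion; this rests on the density Lemma \ref{2.1.1} applied inside $\B(E)$ and on the separate weak*-continuity of the $\B(E)$-action. For the right-ideal statement I would argue symmetrically: the closed right ideals of $\K(E)$ are the $\mathscr{R}_{\K(E)}(F)$, $F \in \SUB(E)$, by Theorem \ref{2.3.9} (whose hypothesis of a right approximate identity holds since $E'$ has BAP), and the analogous weak*-closure computation gives $\overline{\mathscr{R}_{\K(E)}(F)}^{w^*} = \mathscr{R}_{\B(E)}(F)$, using the right-handed version of Theorem \ref{2.0.11b} that follows from compliance. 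This completes the classification of the weak*-closed right ideals as the $\mathscr{R}_{\B(E)}(F)$, $F \in \SUB(E)$.
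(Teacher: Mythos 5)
Your proposal is correct and follows essentially the same route as the paper: weak*-closedness of $\mathscr{I}_{\B(E)}(F)$ and $\mathscr{R}_{\B(E)}(F)$ via preannihilators in $E \widehat{\otimes} E'$, the identification $\mathscr{L}(F^\perp) = \mathscr{I}(F)$ under reflexivity to transport Theorems \ref{2.3.4} and \ref{2.3.9}, and the compliance correspondence of Theorem \ref{2.0.11b} (whose proof is exactly the Lemma \ref{2.1.1} plus Lemma \ref{2.0.5} density argument you rightly settle on, after discarding the unnecessary net-approximation detour). Your explicit verification of the bounded approximate identity hypothesis and of the right-handed version of Theorem \ref{2.0.11b} merely spells out what the paper leaves implicit in ``a similar argument applies''.
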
	

\begin{proof}
For any closed linear subspace $F \subset E$ the left ideal $\mathscr{I}_{\B(E)}(F)$ is 
weak*-closed since we have
$$ \mathscr{I}_{\B(E)}(F) = \{ x \otimes \lambda : x \in F, \, \lambda \in E' \}^\perp,$$
where $x \otimes \lambda$ denotes an element of the predual $E \widehat{\otimes} E'$. Similarly we have
$$\mathscr{R}_{\B(E)}(F) = \{x \otimes \lambda : x \in E, \, \lambda \in F^{\perp} \}^\perp,$$
so that these right ideals are weak*-closed.
Observe that for $F \in \SUB(E)$, we have 
\begin{align}		\label{eq6.1}
\mathscr{L}_{\B(E)}(F^\perp) &= \{ T \in \B(E) : \im T' \subset F^\perp \} 
= \{ T \in \B(E) : \overline{\im T'} \subset F^\perp \} \\
&= \{ T \in \B(E) : \left(\overline{\im T'}\right)_\perp \supset F \} 
= \{ T \in \B(E) : \ker T \supset F \} = \mathscr{I}_{\B(E)}(F), \notag
\end{align}
so that, by Theorem \ref{2.3.4}, every closed left ideal of $\K(E)$ has the form $\mathscr{I}_{\B(E)}(F) \cap \K(E)$ for some $F \in \SUB(E)$. By Corollary \ref{2.0.4b} $\K(E)$ is compliant, so we may apply Proposition \ref{2.0.11b} to see that every weak*-closed left ideal has the required form. A similar argument applies to the weak*-closed right ideals.
\end{proof}

Finally we show how, using the following proposition, we can describe the closed left/right submodules of $E\widehat{\otimes} E'$, for $E$ a reflexive Banach space with AP. We also obtain a description of the left-translation-invariant closed subspaces of $C(G)$, for $G$ a compact group (compare with \cite[Theorem 2]{Ak}). 

\begin{proposition}		\label{6.1}
Let $(A, X)$ be a dual Banach algebra. Then there is a bijective correspondence between the closed right $A$-submodules of $X$ and the weak*-closed left ideals of $A$ given by 
$$Y \mapsto Y^\perp,$$
for $Y$ a closed right $A$-submodule of $X$.
\end{proposition}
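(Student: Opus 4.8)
The plan is to establish the bijective correspondence $Y \mapsto Y^\perp$ by using the duality formulae for annihilators, which are the exact analogue for this module setting of the formulae \eqref{eq2.1} recalled for Banach spaces. First I would fix a dual Banach algebra $(A, X)$, so that $X$ is identified with a closed $A$-submodule of $A'$, and $A = X'$ with the weak*-topology induced by $X$. The key structural observation is that annihilators interchange the right-module action on $X$ with the left-module action on $A$: if $Y \subset X$ is a right $A$-submodule, then $Y^\perp \subset A$ should be a \emph{left} ideal, because for $\mu \in Y^\perp$, $a \in A$, $y \in Y$ we have $\langle a\mu, y \rangle = \langle \mu, ya \rangle = 0$, the last equality holding since $ya \in Y$ by right-submodule-ness. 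Thus $a \mu \in Y^\perp$, so $Y^\perp$ is a left ideal, and it is automatically weak*-closed as an annihilator of a subset of the predual.

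Next I would set up the inverse map. Given a weak*-closed left ideal $J \subset A$, one forms $J_\perp \subset X$ (the pre-annihilator, computed inside the predual pairing), and I would check that $J_\perp$ is a closed right $A$-submodule: for $x \in J_\perp$, $a \in A$, $\mu \in J$ we have $\langle \mu, xa \rangle = \langle a\mu, x \rangle = 0$ since $a\mu \in J$ because $J$ is a left ideal. Hence $xa \in J_\perp$, confirming that $J_\perp$ is a right submodule, and it is norm-closed as a pre-annihilator. So the candidate inverse is $J \mapsto J_\perp$.

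The correspondence then follows from the two bipolar-type identities. For a norm-closed right submodule $Y \subset X$ one has $(Y^\perp)_\perp = \overline{\operatorname{span}}(Y) = Y$, using that $Y$ is already a closed subspace (this is precisely the first formula in \eqref{eq2.1} transported to the pair $(X, A)$, with $A$ playing the role of $X'$). In the other direction, for a weak*-closed left ideal $J \subset A$ one has $(J_\perp)^\perp = \overline{\operatorname{span}}^{w^*}(J) = J$, since $J$ is weak*-closed (this is the second formula in \eqref{eq2.1}). These two identities show the maps are mutually inverse, giving the bijection.

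I expect the main point requiring care, rather than a genuine obstacle, is the bookkeeping around which pairing and which annihilator is meant: $A$ is the dual of $X$, so annihilators $Y^\perp$ are taken in $A = X'$, while pre-annihilators $J_\perp$ land back in $X$, and one must verify that the bipolar identities apply in the form needed (in particular that weak*-closedness of $J$ is exactly what makes $(J_\perp)^\perp = J$). Since these are standard Banach-space duality facts, already invoked as \eqref{eq2.1}, the real content is simply the two short module-action computations showing that annihilators of right submodules are left ideals and vice versa; everything else is the routine application of the annihilator duality, so the proof should be brief.
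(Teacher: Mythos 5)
Your proposal is correct and follows essentially the same route as the paper's own proof: the paper likewise notes that the two short module-action computations show $Y^\perp$ is a left ideal and $I_\perp$ a right submodule, and then invokes the annihilator identities \eqref{eq2.1} to conclude that $Y \mapsto Y^\perp$ and $I \mapsto I_\perp$ are mutually inverse. Your write-up merely makes explicit the pairing bookkeeping that the paper dismisses as ``quickly checked,'' and it is accurate throughout.
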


\begin{proof}
It is quickly checked that $Y^\perp$ is a left ideal, whenever $Y$ is a right submodule, and that $I_\perp$ is a right submodule whenever $I$ is a left ideal. Equation \eqref{eq2.1} now tells us that the given correspondence is bijective, with inverse given by $I \mapsto I_\perp$,  for $I$ a weak*-closed left ideal.
\end{proof}

In the following corollary, given a Banach space $E$, and a closed subspaces $F \subset E$ and $D \subset E'$, we identify $F \widehat{\otimes} E'$ with the closure of the algebraic tensor product $F \otimes E'$ inside $E \widehat{\otimes} E'$, and similarly for $E \widehat{\otimes} D$.

\begin{corollary}		\label{6.2}
Let $E$ be a reflexive Banach space with the approximation property. Then the closed right $\B(E)$-submodules of $E \widehat{\otimes} E'$ are given by 
$$F \widehat{\otimes} E' \quad (F \in \SUB(E)).$$
The closed left $\B(E)$-submodules are given by 
$$E \widehat{\otimes} D  \quad (D \in \SUB(E')).$$
\end{corollary}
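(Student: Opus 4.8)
The plan is to combine Proposition \ref{6.1} with the classification of weak*-closed ideals in Theorem \ref{2.5.5a} and the annihilator formulae \eqref{eq2.1}. For the right submodules, Proposition \ref{6.1} says that the closed right $\B(E)$-submodules of $E \widehat{\otimes} E'$ are precisely the preannihilators $I_\perp$ of the weak*-closed left ideals $I$ of $\B(E)$. By Theorem \ref{2.5.5a} these left ideals are exactly the $\mathscr{I}_{\B(E)}(F)$ for $F \in \SUB(E)$, and the proof of that theorem records the description
$$\mathscr{I}_{\B(E)}(F) = \{ x \otimes \lambda : x \in F, \, \lambda \in E' \}^\perp,$$
the annihilator being taken in $\B(E) = (E \widehat{\otimes} E')'$. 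I would then apply $(\cdot)_\perp$ and invoke the formula $(S^\perp)_\perp = \overline{\spn}(S)$ from \eqref{eq2.1} to obtain
$$\left( \mathscr{I}_{\B(E)}(F) \right)_\perp = \overline{\spn} \{ x \otimes \lambda : x \in F, \, \lambda \in E' \} = F \widehat{\otimes} E',$$
where the last equality is exactly the identification of $F \widehat{\otimes} E'$ with the closure of $F \otimes E'$ inside $E \widehat{\otimes} E'$ fixed in the statement. This yields the first assertion.

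For the left submodules I would run the symmetric version of Proposition \ref{6.1}, which matches the closed left $\B(E)$-submodules of $E \widehat{\otimes} E'$ with the weak*-closed right ideals of $\B(E)$ via $Y \mapsto Y^\perp$. By Theorem \ref{2.5.5a} the weak*-closed right ideals are the $\mathscr{R}_{\B(E)}(F)$ for $F \in \SUB(E)$, and the proof of that theorem records
$$\mathscr{R}_{\B(E)}(F) = \{ x \otimes \lambda : x \in E, \, \lambda \in F^\perp \}^\perp.$$
Taking preannihilators and using \eqref{eq2.1} again gives $\left( \mathscr{R}_{\B(E)}(F) \right)_\perp = E \widehat{\otimes} F^\perp$. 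To conclude that these are precisely the spaces $E \widehat{\otimes} D$ with $D \in \SUB(E')$, I would use the reflexivity of $E$: the annihilator map $F \mapsto F^\perp$ is a bijection of $\SUB(E)$ onto $\SUB(E')$, so $F^\perp$ ranges over all of $\SUB(E')$ as $F$ ranges over $\SUB(E)$.

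The computations are short, so the only points requiring care — which I regard as the main, if mild, obstacle — are the two identifications underpinning the argument. The first is that $\overline{\spn}\{ x \otimes \lambda : x \in F, \, \lambda \in E' \}$ really is the subspace denoted $F \widehat{\otimes} E'$ (and likewise for $E \widehat{\otimes} F^\perp$); this holds by the convention adopted immediately before the corollary, but should be flagged explicitly rather than used silently. The second is the surjectivity of $F \mapsto F^\perp$ onto $\SUB(E')$, which is the one place where reflexivity of $E$ is genuinely needed. The fact that the annihilator sets appearing above compute $\mathscr{I}_{\B(E)}(F)$ and $\mathscr{R}_{\B(E)}(F)$ is supplied verbatim by the proof of Theorem \ref{2.5.5a}, so no additional work is required there.
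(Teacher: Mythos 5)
Your proposal is correct and follows essentially the same route as the paper's proof: Proposition \ref{6.1} combined with the classification in Theorem \ref{2.5.5a}, together with a short annihilator computation identifying $\mathscr{I}_{\B(E)}(F)$ with the annihilator of $F \widehat{\otimes} E'$ (and likewise for the right ideals), and reflexivity to make $F \mapsto F^\perp$ surjective onto $\SUB(E')$. The only immaterial difference is directional: the paper first checks that $F \widehat{\otimes} E'$ is a closed right submodule and computes its annihilator $I$ via $E' \circ I = F^\perp$ and \eqref{eq6.1}, whereas you take preannihilators of the ideals and invoke $(S^\perp)_\perp = \overline{\spn}(S)$ from \eqref{eq2.1}, which amounts to the same calculation.
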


\begin{proof}
Given $F \in \SUB(E)$ it is easily seen that $F \widehat{\otimes} E'$ is a closed right
 submodule. Hence $I := (F \widehat{\otimes} E')^\perp$ is a weak*-closed left ideal by
  Proposition \ref{6.1}. It is easily checked that $E' \circ I = F^\perp$, and hence 
  $I = \mathscr{L}_{\B(E)}(F^\perp) = \mathscr{I}_{\B(E)}(F)$ by \eqref{eq6.1}. Since the correspondence given in Proposition \ref{6.1} is bijective, and since by Theorem \ref{2.5.5a} every weak*-closed ideal has the form $\mathscr{I}_{\B(E)}(F)$ for some $F$, it must be that every closed right submodule has the form $F \widehat{\otimes} E'$ for some $F \in \SUB(E)$. 
  
  The result about closed left submodules is proved analogously.
\end{proof}

\begin{corollary}		\label{2.0.14}
Let $G$ be a compact group, and let $X \subset C(G)$ be a closed linear subspace, which is invariant under left translation. Then there exists a choice of  
linear subspaces $E_\pi \leq H_\pi \ (\pi \in \widehat{G})$ such that 
$$X = \overline{\spn} \left\{ \xi *_\pi \eta : \pi \in \widehat{G}, \, \xi \in E_\pi, \, \eta \in H_\pi \right\}.$$
\end{corollary}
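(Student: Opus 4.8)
The plan is to recognise the left-translation-invariant closed subspaces of $C(G)$ as precisely the closed right $M(G)$-submodules of the predual $C(G)$, and then to run the dualities already established. Recall that for $G$ compact $M(G)$ is a dual Banach algebra with predual $C(G)$, and that $C(G)$, as a submodule of $M(G)'$, carries the dual right action $f \mapsto f \cdot \mu$ determined by $\langle f \cdot \mu, \nu \rangle_{(C(G),\, M(G))} = \langle f, \mu * \nu \rangle$ for $\mu, \nu \in M(G)$. First I would compute this action explicitly: evaluating against point masses yields $(f \cdot \mu)(t) = \int_G f(st) \dd\mu(s)$, so that in particular $f \cdot \delta_s = L_{s^{-1}} f$, where $L_u$ denotes left translation by $u$. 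As $s$ ranges over $G$ so does $s^{-1}$, so invariance of a subspace under all left translations is exactly invariance under the right action of the point masses.

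Next I would promote this to invariance under the full right action of $M(G)$. Writing $f \cdot \mu = \int_G (L_{s^{-1}} f)\dd\mu(s)$ — a legitimate vector-valued integral because $s \mapsto L_{s^{-1}} f$ is a norm-continuous $C(G)$-valued map on the compact group $G$ — and pairing with an arbitrary $\Lambda \in X^\perp \subset M(G)$ gives
$\langle f \cdot \mu, \Lambda \rangle = \int_G \langle L_{s^{-1}} f, \Lambda \rangle \dd\mu(s) = 0$,
since each $L_{s^{-1}} f$ lies in $X$. Hence $f \cdot \mu \in (X^\perp)_\perp = X$ by \eqref{eq2.1}. Thus a closed, left-translation-invariant subspace $X \subset C(G)$ satisfies $X \cdot M(G) \subset X$; that is, it is a closed right $M(G)$-submodule of its predual. (The converse is immediate on testing against point masses, though only this direction is needed.)

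With this identification in hand the remainder is a chase through the established correspondences. By Proposition \ref{6.1} applied to $(M(G), C(G))$, the map $X \mapsto X^\perp$ carries closed right $M(G)$-submodules of $C(G)$ bijectively onto weak*-closed left ideals of $M(G)$, with inverse $I \mapsto I_\perp$. By Theorem \ref{2.0.13} every such ideal equals $J[(E_\pi)_{\pi \in \widehat{G}}]$ for some choice of subspaces $E_\pi \leq H_\pi$, so $X^\perp = J[(E_\pi)_{\pi \in \widehat{G}}]$ for some such choice, whence $X = (X^\perp)_\perp = J[(E_\pi)_{\pi \in \widehat{G}}]_\perp$.

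Finally I would identify this annihilator via Lemma \ref{2.0.12}, which gives $J[(E_\pi)_{\pi \in \widehat{G}}] = \overline{\spn}\{\xi *_\pi \eta : \pi \in \widehat{G},\, \xi \in E_\pi,\, \eta \in H_\pi\}^\perp$. Setting $S = \{\xi *_\pi \eta : \pi \in \widehat{G},\, \xi \in E_\pi,\, \eta \in H_\pi\} \subset C(G)$, this reads $J[(E_\pi)_{\pi \in \widehat{G}}] = S^\perp$, so the bipolar formula \eqref{eq2.1} yields $X = (S^\perp)_\perp = \overline{\spn}(S)$, which is exactly the asserted description. The only genuinely substantive step is the passage from pointwise translation-invariance to full module invariance via the integral representation of $f \cdot \mu$; everything afterwards is a formal application of Proposition \ref{6.1}, Theorem \ref{2.0.13}, Lemma \ref{2.0.12}, and \eqref{eq2.1}.
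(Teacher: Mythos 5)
Your proof is correct and follows essentially the same route as the paper: identify the left-translation-invariant closed subspaces of $C(G)$ with the closed right $M(G)$-submodules of the predual, then chase through Proposition \ref{6.1}, Theorem \ref{2.0.13}, Lemma \ref{2.0.12} and the bipolar formula \eqref{eq2.1}. The only (minor) divergence is in the identification step, where the paper appeals to the weak*-density of the discrete measures in $M(G)$ (with separate weak*-continuity of multiplication doing the work), whereas you prove the same fact directly via the norm-continuity of $s \mapsto L_{s^{-1}}f$ and a vector-valued integral representation of $f \cdot \mu$; both arguments are valid.
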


\begin{proof}
In fact, by the weak*-density of the discrete measures in $M(G)$, the closed right submodules of $C(G)$ coincide with the closed linear subspaces invariant under left translation (compare with \cite[Lemma 3.3]{W1}). By Proposition \ref{6.1} $X$ has the form $I_\perp$, for some weak*-closed left ideal $I$ of $M(G)$. It now follows from Theorem \ref{2.0.13} and Lemma \ref{2.0.12} that $X$ has the given form.
\end{proof}

\subsection*{Acknowledgements}
\noindent
This work was supported by the French
``Investissements d'Avenir'' program, project ISITE-BFC (contract
ANR-15-IDEX-03). The article is based on part of the author's PhD thesis, and as such he would like to thank his doctoral supervisors Garth Dales and Niels Laustsen, as well as his examiners Gordon Blower and Tom K\"orner, for their careful reading of earlier versions of this material and their helpful comments. We would also like to thank Yemon Choi for some helpful email exchanges. Finally, we would like to thank the anonymous referee for his/her many insightful comments, and in particular for conjecturing Proposition \ref{2.0.8}(ii).

\end{document}